\newcommand{\vs}{\vskip.125in}
\newcommand{\R}{\ensuremath{\mathbb{R}}}
\newcommand{\al}{\alpha}
\newcommand{\ga}{\gamma}
\newcommand{\de}{\delta}
\newcommand{\eps}{\epsilon}
\newcommand{\abs}[1]{\left \vert#1\right \vert}
\definecolor{blue}{rgb}{0,0,1}
\definecolor{red}{rgb}{1,0,.2}
\numberwithin{equation}{section}
\theoremstyle{plain}
\newtheorem{theorem}{Theorem}[section]
\newtheorem{lemma}[theorem]{Lemma}
\newtheorem{corollary}[theorem]{Corollary}
\newtheorem{proposition}[theorem]{Proposition}
\theoremstyle{definition}
\newtheorem{definition}[theorem]{Definition}
\theoremstyle{remark}
\newtheorem{remark}[theorem]{Remark}
\newtheorem{note}[theorem]{Note}
\newtheorem{case[theorem]}{Case}
\newcommand*{\myproofname}{Proof of Claim}
\author{Elizabeth G. Campolongo and Krystal Taylor}
\address{Department of Mathematics, The Ohio State University}
\email{campolongo.4@osu.edu  }
\address{Department of Mathematics, The Ohio State University}
\email{taylor.2952@osu.edu}
\thanks{Taylor is supported in part by the Simons Foundation Grant 523555.}
\title{\parbox{14cm}{\centering{Lattice Points close to the Heisenberg Spheres}}}
\begin{document}

\begin{abstract} 
We study a lattice point counting problem for spheres arising from the Heisenberg groups. 
In particular, we prove an upper bound on the number of points
 on and near large dilates of the unit spheres generated by the anisotropic norms $\|(z,t)\|_\al = ( \abs{z}^\al + \abs{t}^{\al/2})^{1/\al}$ for $\al\geq 2$. 
As a first step, we reduce our counting problem to one of bounding an energy integral. 
 The primary new challenges that arise are the presence of vanishing curvature and uneven dilations.  
 In the process, we establish bounds on the Fourier transform of the surface measures arising from these norms.
Further, we utilize the techniques developed here to estimate the number of lattice points in the intersection of two such surfaces.
%Our work is inspired by that of joint work of the second listed author with Iosevich in \cite{IT} and with Garg and Nevo in \cite{GNT}. 
%
%
\end{abstract}
\maketitle
\setcounter{tocdepth}{1}
\tableofcontents
\section{Introduction}
Estimating the number of integer lattice points that are in, on, and near convex surfaces is a classic subject in number theory, harmonic analysis, and related areas of mathematics.
The Gauss circle problem is a time honored example. 
It concerns approximating the number of integer lattice points in a large dilate of the unit disc in terms relative to the area of the disk: 
\begin{equation}
\#\left( \mathbb{Z}^2 \cap \{x \in \mathbb{R}^2 : \abs{x} \leq R\} \right) = \pi R^2 + E(r),
\end{equation}
where 
$\#(\cdot)$ denotes the size of a finite set. 
It is conjectured that the best bound for the error is $|E(R)| \lesssim R^{1/2 +\epsilon}$.\footnote{Here,  $x \lesssim y$ means that there exists some constant $C$ such that $x \leq Cy$, and 
$x \sim y$ means that both $x \lesssim y$ and $y \lesssim x$.}
 In 2003, Huxley \cite{Huxley3} utilized advanced exponential sum estimates to show that $|E(R)| \lesssim R^{\frac{131}{208}}$. More recently, Bourgain and Watt improved this slightly to $|E(R)| \lesssim R^{\frac{517}{824}}$ in \cite{BW}, building on their work in \cite{BW1}.
\vs

Variants of the Gauss circle problem in which circles are replaced by more intricate surfaces have attracted much attention over the years (see, for instance, \cite{Andrews, Chamizo, IKKN, L10, Schmidt}).  
Notably, Lettington studied the number of integer lattice points near smooth surfaces with non-vanishing curvature (\cite{L10}). 
He established that for $d\geq 3$, $\delta>0$, 
\begin{equation}
\# \left(\{k \in \mathbb{Z}^d \ : R \leq \Vert k \Vert_B \leq R + \delta \} \right) \leq C \max\{R^{d-2+ \frac{2}{d+1}}, R^{d-1}\delta\},
\label{L10}
\end{equation}
where $C>0$ is a universal constant and
$$\Vert x \Vert_B = \inf \{t >0 : x \in tB \}.$$
This result was previously known to hold when $d=2$ (see the discussion in \cite[Introduction]{L10}), and is known to be sharp (see the discussion in \cite{IT}). 
A simple Fourier analytic proof of Lettington's result, as well as an extension to a variable coefficient setting, was provided by Iosevich and the second listed author in \cite{IT}.
\vs

The work in \cite{IT}, as well as Lettington's original result, are contributions towards a conjecture of W. Schmidt \cite{Schmidt}. 
This conjecture concerns the number of lattice points on a given surface with non-vanishing curvature, and states that if B $\subset \mathbb{R}^d$, for $d \geq 3$, is a symmetric convex body with a smooth boundary that has non-vanishing Gaussian curvature, then for any $\epsilon >0$,
$$\# \left(\{R\partial B \cap \mathbb{Z}^d \} \right) \leq C_{ \epsilon} R^{d-2+ \epsilon }.$$
\vs

A more complex problem arises when we consider lattice point counting problems for surfaces with points of vanishing curvature. 
This includes error estimates in the case of super spheres \cite{Krtl2, Krtl3} and $\ell^p$-balls \cite{Randol1, Randol2}, as well as more general surfaces of rotation \cite{KN1, Nowak}.  
See also \cite{Peter1, Peter2} where the effect of points of vanishing curvature on the error term or ``the lattice remainder term'' is studied in much greater generality.  
\vs
{
An example more specific to this paper arises from  the
family of
 Heisenberg norm balls, defined by 
\begin{equation}\label{Hball}
B_R^{\alpha,A} := \big\{(z,t) \in \mathbb{R}^{2d} \textsf{ x } \mathbb{R}: |z|^\alpha + A|t|^{\alpha/2} \leq R^\alpha\big\}.
\end{equation}
Depending on which value of $\alpha$ is considered, the surface of $B_R^{\alpha,A} $
has points with curvature vanishing to maximal order; more details are presented in Section \ref{Curvature&Decay}.   
The Heisenberg norm balls were considered by 
Garg, Nevo, and the second listed author in 2014 \cite{GNT}.  They investigated a variant of the Gauss circle problem replacing Euclidean balls by those in \eqref{Hball} and provided bounds on the error term, which they demonstrated were sharp when $\alpha=2$ in all dimensions. 
The sharpness when $\alpha=4$ was subsequently demonstrated by Gath \cite{Gath, Gath2}.

\subsection{Main Result}
In this paper, we provide an upper bound on the number of lattice points 
near the surfaces of the Heisenberg norm balls. 
These surfaces present an interesting evolution in lattice point counting problems due to their points of vanishing curvature. 
Before stating our main result, we give a brief overview of the Heisenberg group: we refer to  \cite[Chapter 1]{Folland2} or \cite[Chapter 12]{Stein} for a deeper treatment.
  \vs

The \textbf{Heisenberg group} is defined to be 
$$\mathsf{H}_d = \ \mathbb{R}^{2d}  \ \mathsf{x} \ \mathbb{R} =  \{(z,t) :\in \mathbb{R}^{2d}, t \in \mathbb{R}\},$$ 
where
 multiplication is defined 
 by
$$(x,y,t)\cdot(u,v,s) = \big(x+u, y+ v, t+s - \frac{1}{2}\big(\langle x,v \rangle - \langle u,y \rangle\big)\big),$$ 
for $x,y,u,v \in \R^d$ and $s,t\in \R$, 
where $\langle x,v \rangle$ is the standard inner product on $\mathbb{R}^d$ for $d\geq 1$. 
\vs

The \textbf{Heisenberg norm} is defined by the following family of gauge functions on the Heisenberg group
\begin{equation}\label{HeisenbergNorm}
\Vert(z,t)\Vert_{\alpha,A} = \big(\abs{z}^{\alpha}+A\abs{t}^{\alpha/2}\big)^{1/\alpha},
\end{equation}
for $\alpha, A > 0$, and where $z \in \mathbb{R}^{2d}, t \in \mathbb{R}$ and $|\cdot|$ denotes the Euclidean norm. 
When $\alpha=4$, this anisotropic norm is more commonly referred to as the \textit{Cygan} \cite{Cy1, Cy} and \textit{Kor\'{a}nyi} \cite{Kor} norm. For simplicity, we will assume $A=1$.
\vs

A useful manipulation of elements of the Heisenberg group is the \textbf{Heisenberg dilation}, which is a class of functions indexed by $a \in \mathbb{R}_+$ and defined as $\varphi_a : (z,t) \mapsto (az,a^2t)$. These dilations describe an automorphism group of $\mathsf{H}_d$ through which a notion of homogeneity on the group is understood.
\vs

We note that the Heisenberg groups are examples of non-abelian Lie groups, and as such play an active role in modern harmonic analysis. A number of recent results have appeared investigating mapping properties of maximal averaging operators associated with the Heisenberg groups.  See, for instance, \cite{JSS21} and the references therein. 
\vs

With these concepts in tow, we define the \textbf{Heisenberg sphere} (or Kor\'{a}nyi} sphere) of radius $R$ and index $\alpha$ 
by 
\begin{equation}\label{surfaces}
\partial B_R^\alpha = \left\{   (z,t) \in \R^{2d}\times \R:  \Vert (z,t) \Vert_{\alpha} = R \right\}.
\end{equation}
A particularly interesting feature of these surfaces of revolution is that they have points of maximal vanishing curvature when $\al>2$; this is discussed further in Section~\ref{Curvature&Decay}. \vs

Our main result on the number of lattice points on and near the Heisenberg spheres follows.

\begin{theorem}\label{Heisenberg}
Let $d\geq 1$ and $\al\geq 2 $ be integers, and set $n=2d+1$. For $(z,t) \in \R^{2d}\times \R$, let $\|(z,t) \|_\al$ be the Heisenberg norm defined by 
$$\|(z,t) \|_\al= \big( \abs{z}^\alpha + \abs{t}^{\frac{\al}{2}} \big)^{\frac{1}{\al}}.$$
Then
\begin{equation}\label{Heisenberg_count}
 \#\big( \{ m \in \mathbb{Z}^{n} :  
R-\delta \leq \Vert m \Vert_\alpha \leq R+ \delta \}\big) 
 \lesssim 
 \begin{cases}
 \max\{R^{\,n- \frac{1}{(\al-1)} }, R^n\delta\},             \text{ if }  \al \geq 2(n-1),\\
 \max\{ R^{n- \frac{1}{ (2n-3) } },       R^n\delta\},  \!  \text{ if } 2< \al < 2(n-1),\\
\max\{ R^{n-  \frac{(n-2)}{n}  },       R^n\delta\},     \,  \text{ if } \al=2, \\
 \end{cases}
 \end{equation}
for $0 \leq \delta<1$ and $R>1$.
\end{theorem}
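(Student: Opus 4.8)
The idea is to convert the counting problem into a question about an exponential sum by Poisson summation, feed in the decay of the Fourier transform of the surface measures on $\partial B_1^{\al}$, and then optimize a dyadic decomposition; the three ranges of $\al$ in \eqref{Heisenberg_count} should emerge from which part of that decomposition dominates.

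\emph{Step 1 (reduction to an energy integral).} I would first exploit the Heisenberg dilation $\varphi_R\colon(z,t)\mapsto(Rz,R^2t)$, under which $\partial B^{\al}_R=\varphi_R(\partial B^{\al}_1)$: writing $L_R:=\varphi_R^{-1}\Z^{n}=\tfrac1R\Z^{2d}\times\tfrac1{R^2}\Z$, the count becomes $N_\delta(R)=\#\big(L_R\cap\{\,1-\tfrac\delta R\le\|\cdot\|_{\al}\le 1+\tfrac\delta R\,\}\big)$. Let $\mu$ denote the surface measure on $\partial B^{\al}_1$, normalized so that Lebesgue measure disintegrates as $d\mu_\rho\,d\rho$ in the $\varphi$-polar coordinates, and let $\psi\ge0$ be a fixed Schwartz majorant of $\mathbf 1_{[-1,1]}$ with $\widehat\psi\ge0$. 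Dominating the indicator of the shell by a nonnegative function built from $\mu*\psi_{\delta/R}$ and applying Poisson summation for $L_R$ (whose dual lattice is $L_R^{*}=R\Z^{2d}\times R^{2}\Z$, of covolume $R^{-(n+1)}$) produces
\[
N_\delta(R)\ \lesssim\ R^{n}\delta\ +\ R^{n+1}\sum_{0\ne k\in L_R^{*}}\big|\widehat{\mu}(k)\big|\,\big|\widehat\psi(\tfrac\delta R k)\big|.
\]
The leading term is a constant times $R^{n+1}$ times the volume of the shell, which is $\sim R^{n}\delta$; it is precisely the anisotropic dilation that makes this $R^{n}\delta$ rather than the $R^{n-1}\delta$ of the Gauss-circle setting. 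I would then split the error into dyadic frequency pieces $|k|\sim T$ with $R\le T\lesssim R/\delta$ (the cutoff $\widehat\psi(\tfrac\delta R k)$ kills larger $T$), and on each piece either sum the pointwise decay of $\widehat\mu$ or, after Cauchy--Schwarz, pass to the \emph{energy integral} $\int_{|\eta|\lesssim T}|\widehat\mu(\eta)|^2\,d\eta$ weighted by $\#\{k\in L_R^{*}:|k|\sim T\}$; the latter is more robust since the relevant decay is direction dependent. The factor $\#\{k\in L_R^{*}:|k|\sim T\}$ records the stretching of $L_R^{*}$ by $R$ in $z$ and by $R^{2}$ in $t$, so that for $T<R^{2}$ only frequencies with $k_{t}=0$ occur.

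\emph{Step 2 (Fourier decay and the dyadic sum).} Into Step~1 I would feed the decay estimates for $\widehat\mu$ established in Section~\ref{Curvature&Decay}. Away from the $t$-axis and the equatorial hyperplane $\{t=0\}$ the surface $\partial B^{\al}_1$ has curvatures bounded below, giving the usual $|\widehat\mu(\xi)|\lesssim|\xi|^{-(n-1)/2}$; along the $t$-axis (the poles, where the curvature vanishes to order $\al-2$) the best bound is $|\widehat\mu(0,\la)|\lesssim|\la|^{-(n-1)/\al}$; and along the $z$-hyperplane (the equator, where the surface is nearly tangent to the $t$-direction --- an edge when $\al=2$, unbounded curvature when $2<\al<4$) one has $|\widehat\mu(\zeta,0)|\lesssim|\zeta|^{-(n-2)/2-2/\al}$, with matching bounds on angular neighborhoods of these sets obtained by localizing the surface and applying van der Corput / stationary phase. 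Summing the resulting (essentially geometric) series in $T$ --- using the equatorial decay for the frequencies $k$ with $k_{t}=0$, the polar decay for $k=(0,k_{t})$, and the nondegenerate bound otherwise --- I expect a bound of the form $R^{n}\delta\cdot\max_{T}(\cdots)$ whose maximum is attained at the top scale $T\sim R/\delta$, where a factor $1/\delta$ cancels a $\delta$ and leaves $\max\{R^{n-c},R^{n}\delta\}$. Optimizing $c$ over the competing polar and equatorial contributions should give $c=\tfrac1{\al-1}$ when the poles win (for $\al$ large, $\al\ge2(n-1)$), $c=\tfrac1{2n-3}$ when the equator wins ($2<\al<2(n-1)$), and $c=\tfrac{n-2}{n}$ in the borderline case $\al=2$, where the equatorial edge governs.

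\emph{Where the difficulty lies.} I expect the hard part to be Step~2: producing Fourier decay for $\mu$ that is sharp \emph{in all directions}, and then reconciling it with the anisotropy of $L_R^{*}$ in the dyadic sum. The poles are degenerate critical points of the usual kind and are routine; the real obstruction is the equator, where (in the rescaled picture) the surface stays tangent to the $t$-axis over a window of length $\sim R^{2(\al-1)/\al}$, so that a single $z$ can sit near many integer heights $t$ --- the very mechanism that would inflate the count if handled crudely. This must be absorbed either by the equatorial estimate $|\xi|^{-(n-2)/2-2/\al}$ (for $\al>2$) or, when $\al=2$, by a separate analysis of the edge singularity, and then balanced against the $R$-versus-$R^{2}$ stretching of the dual lattice; carrying out that balance uniformly in $\delta$ is the technical core of the proof.
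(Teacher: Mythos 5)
Your route is genuinely different from the paper's: we never invoke Poisson summation. Instead we rescale and thicken the lattice itself into a probability measure $\mu_q$ supported in the unit cube, convert the count into $q^{n-\tau}\int|\widehat{\mu}_q(\xi)|^2\widehat{\sigma}_\alpha(\xi)\,d\xi$, dominate $\widehat{\sigma}_\alpha$ by the \emph{uniform} decay $(1+|\xi|)^{-\gamma(\al)}$ of Proposition~\ref{alphaDecay}, and then bound the resulting energy integral $\iint|x-y|^{\gamma(\al)-n}d\mu_q\,d\mu_q$ by a direct physical-side case analysis of pairs of lattice boxes (Section~\ref{GeoSum}); no direction-dependent decay and no dual-lattice sum ever appears. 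Your frequency-side version is viable in outline, and your observation that in $L_R^{*}=R\Z^{2d}\times R^{2}\Z$ every nonzero frequency below the cutoff $R/\delta<R^{2}$ has $k_t=0$ is a genuinely useful structural remark.

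As written, however, there are concrete gaps. (i) The entire analytic content of the argument --- sharp decay of $\widehat{\mu}$ in all directions, above all at the poles where every principal curvature vanishes --- is asserted rather than proved; this is exactly Proposition~\ref{alphaDecay}, whose proof occupies most of Section~\ref{Curvature&Decay}, and your phrase ``matching bounds on angular neighborhoods \dots by van der Corput'' is precisely where the work lies. (ii) Your displayed Poisson inequality is mis-normalized: the majorant of the shell indicator has total mass $\sim\delta/R$, so $\widehat f(k)\sim\tfrac{\delta}{R}\widehat{\mu}(k)\widehat{\psi}(\tfrac{\delta}{R}k)$ and the error term must be $R^{n}\delta\sum_{k\neq0}|\widehat{\mu}(k)||\widehat{\psi}(\tfrac{\delta}{R}k)|$, not $R^{n+1}\sum(\cdots)$; taken literally your bound loses a factor $R/\delta$ and does not close. (iii) With the corrected normalization and the uniform decay $\gamma=\gamma(\al)$, the dyadic sum over $k_t=0$, $R\le|k|\lesssim R/\delta$ yields $R^{n}\delta+R^{n-\gamma}\delta^{\gamma-n+2}$, which blows up as $\delta\to0$; to obtain \eqref{Heisenberg_count} for \emph{all} $0\le\delta<1$ you must cap $\delta$ from below at $R^{-\gamma/(n-1-\gamma)}$ and use monotonicity of the count in $\delta$, as is done in Section~\ref{reduction_section}. (iv) Your story that the three cases arise from a competition between polar and equatorial contributions is not what your own setup would produce: in the regime of $\delta$ where the dyadic sum is actually used, the cutoff is below $R^{2}$, so the purely polar frequencies $(0,k_t)$ never contribute, and feeding in the stronger equatorial estimate you quote would land on exponents different from (better than) those in \eqref{Heisenberg_count}. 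The stated exponents come from the single uniform rate $\gamma(\al)=\min\{\tfrac{n-1}{\al},\tfrac12\}$ (resp.\ $\tfrac{n-2}{2}$ for $\al=2$) pushed through $c=\gamma/(n-1-\gamma)$, so you have not verified that your method produces the claimed bounds.
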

\vs

In Section \ref{sharp_section}, we will see that Theorem \ref{Heisenberg} is sharp, meaning that \eqref{Heisenberg_count} is an equality, for all $\al$ provided $\de$ obeys a lower bound dependent on $\al$ and $n$.  
\vs

We now proceed by giving some further context to Theorem \ref{Heisenberg} with some commentary and comparisons to known literature.  We begin by observing a simple trivial bound. 
\vs 

\subsubsection{Comparison of Theorem \ref{Heisenberg} to a trivial bound}
For large values of $R$, 
the left-hand-side of \eqref{Heisenberg_count} is bounded trivially by 
$$\left|B_{(R+2)}^\alpha\right| - \left|B_{(R-2)}^\alpha \right|,$$
where 
$|\cdot |$ is used to denote the Euclidean volume and $B_R^\alpha$ is 
a dilate by $R$ of the unit ball under the map $\varphi_R : (z,t) \rightarrow (Rz,R^2t)$.
As such, $|B_R^\alpha|=R^{2d+2}|B_1^\alpha|$, which is evident upon noting that
\begin{IEEEeqnarray}{cl}
|B_R^\alpha| &= \int_{ \left\{ (z,t) \in \mathbb{R}^{2d} \ \mathsf{x} \ \mathbb{R} \ : \ |z|^\alpha + |t|^{\alpha/2} \leq R^\alpha \right\} } dzdt 
\nonumber \\
&= \int_{ \left\{ (z,t) \in \mathbb{R}^{2d} \ \mathsf{x} \ \mathbb{R} \ : \left|\frac{z}{R}\right|^\alpha + \left|\frac{t}{R^2} \right|^{\alpha/2} \leq 1 \right\} } dzdt .
\nonumber
\end{IEEEeqnarray}
Making the change of variables, $w = \frac{z}{R} \in \mathbb{R}^{2d}$ and $s = \frac{t}{R^2} \in \mathbb{R}$, yields
\begin{IEEEeqnarray}{cl}
|B_R^\alpha| &= R^{2d+2} \int_{ \left\{ (w,s) \in \mathbb{R}^{2d} \ \mathsf{x} \ \mathbb{R} \ : \ |w|^\alpha + |s|^{\alpha/2} =1 \right\} } dzdt 
\nonumber \\
&= R^{2d+2}|B_1^\alpha|.
\nonumber
\end{IEEEeqnarray}
In conclusion, the expression on the left-hand-side of \eqref{Heisenberg_count} is bounded trivially by a constant multiple of $R^{2d+1} = R^n$, and our bound is an improvement for all $\al$.  
\vs

\subsubsection{Comparison of Theorem \ref{Heisenberg} to Iosevich-Taylor's \cite[Theorem 1.1]{IT}}
The first step in the proof of Theorem \ref{Heisenberg} consists of transforming our lattice point counting problem into one of bounding an energy integral of a fractal-like set. This idea was 
 introduced by Iosevich and Taylor in \cite{IT}.  
 However, 
due to the irregular scaling inherent to the geometry of the Heisenberg norms, the arising energy integral in our context is more complex than those handled in \cite{IT}.  
Consequently, a much more delicate analysis is required, as well as
decay estimates on the Fourier transforms of the surface measures associated with the Heisenberg norms. 
\vs

Though it is possible to expand the method in \cite{IT} to surfaces with vanishing curvature--as the first listed author does in her thesis \cite[Proposition 1.3.5]{Campo}--the Heisenberg spheres fall outside its scope.
In more detail, Campolongo's extension of \cite[Theorem 1.3]{IT} does not apply  to the Heisenberg spheres to prove Theorem \ref{Heisenberg}. 
Indeed, in the language of \cite[Proposition 1.3.5]{Campo} or \cite[Theorem 1.3]{IT}, 
set $\al_i = a$ for each $i=1,\dots, (n-1)$ and $\al_n= 2a$, where $a= \frac{n}{(n+1)}$. 
Observing that in order to apply this proposition we must have
$2a \le \frac{n}{n-\gamma}$, 
where $\gamma$ is the decay exponent on the Fourier side for the surface,
we see it is necessary that $n-\ga \le \frac{n}{2a}$, which is equivalent to $\frac{n-1}{2} \le \ga$. 
Since the Heisenberg spheres
have points of vanishing curvature, such a lower bound on $\ga$ is not possible. In particular, we show in Proposition~\ref{alphaDecay} that for any given $\alpha \geq 3$, the decay is $\ga = \min\!\left\{\frac{n-1}{\al}, \frac{1}{2}\right\} <\frac{n-1}{2}$ for $n \geq 3$, and for $\alpha =2$ that $\gamma = \frac{n-2}{2} < \frac{n-1}{2}$, thus excluding such an application. 
\vs
%
%
%
%%ALT. METHOD
\subsubsection{Comparison of Theorem \ref{Heisenberg} to Garg-Nevo-Taylor's \cite{GNT}}
As a final comment on Theorem \ref{Heisenberg}, 
in Section \ref{sec_alt} we consider an alternate way to bound the lattice point count based on the work of \cite{GNT}.  This alternate method provides an improved bound for small values of $\delta>0$, and this result is stated in  Proposition~\ref{useGNT}. Our proof of Theorem \ref{Heisenberg}, 
on the other hand, offers a more direct approach in that it does not rely on the work in \cite{GNT} and all oscillatory integrals are computed directly without the use of Bessel functions. 
\vs
%END
%%%%%%%%%%%%%%%%%%%%%%%%%%%%%%%%%%%%%%%%%%

%%%%%%%%%%%%INTERSECTING HEISENBERG NORM BALLS
\subsection{Intersections of Heisenberg norm balls} 
We finish this section by considering a lattice point count near the intersection of two Heisenberg spheres  \eqref{surfaces}.    
For $0<\Vert p \Vert_\al\leq CR$ fixed, we are interested in bounding 
\begin{equation}\label{dream_HNBIntersect}
\#\big(\big\{ m \in \mathbb{Z}^{n} :   |\Vert m \Vert_\al - R| \leq \delta \text{ and } |\Vert m-p \Vert_\al - R| \leq \delta \big\}\big).
\end{equation}

For a typical $p$, this will be the number of lattice points near the intersection of two Koryani spheres, and this intersection is maximized when $p$ is near the origin.  The following average bound follows as an immediate consequence of Theorem \ref{Heisenberg}.  

\begin{corollary}[Intersecting Heisenberg Spheres]\label{HNBInt}
Let $d \geq 1$ and $\alpha \geq 2$ be integers, set $n = 2d+1$, and 
let $\|\cdot \|_\al$ be the Heisenberg norm defined by 
$$\|(z,t) \|_\al= \big( \abs{z}^\alpha + \abs{t}^{\frac{\al}{2}} \big)^{\frac{1}{\al}}.$$
Then for $R>1$
and $0 \leq \delta < 1$,
\begin{align*} 
\#\big(\{(m,p) \in \mathbb{Z}^{n} \, \mathsf{x} \ \mathbb{Z}^{n} \!:  
|\Vert m &\Vert_\al \!- R| \!\leq \delta \text{ and } |\Vert m-p \Vert_\al \!- R| \!\leq \delta \}\big) \nonumber\\
&\lesssim
\max\{ R^{2 \left(n - \frac{ \gamma(\al) }{n-1 - \gamma(\al)   }\right)},       \left(R^n\delta\right)^2 \},
\end{align*}
where  $\ga(\al) = \min\!\left\{\frac{n-1}{\al}, \frac{1}{2}\right\}$ when $\al \geq 3$, 
and 
$\ga(\al) = \frac{n-2}{2}$ when $ \al =2$. 
\end{corollary}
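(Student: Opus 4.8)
The plan is to obtain Corollary~\ref{HNBInt} as a purely combinatorial consequence of Theorem~\ref{Heisenberg}, requiring no new analysis; this is the sense in which the paper calls it ``immediate''. Write
$$ \mathcal{S}_{R,\delta} := \big\{\, k \in \mathbb{Z}^{n} : \abs{\, \Vert k \Vert_\al - R \,} \leq \delta \,\big\}, $$
and let $M = M(R,\delta,\al,n)$ denote the right-hand side of \eqref{Heisenberg_count}, so that $M$ has the form $\max\{R^{\,n-c(\al)},\, R^{n}\delta\}$ and Theorem~\ref{Heisenberg} gives $\#\mathcal{S}_{R,\delta} \lesssim M$.

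First I would observe that a pair $(m,p) \in \mathbb{Z}^{n} \times \mathbb{Z}^{n}$ satisfies the two constraints in the corollary if and only if $m \in \mathcal{S}_{R,\delta}$ and $m - p \in \mathcal{S}_{R,\delta}$. Therefore the map $(m,p) \mapsto (m,\, m-p)$ is a bijection from the set being counted onto $\mathcal{S}_{R,\delta} \times \mathcal{S}_{R,\delta}$, with inverse $(a,b) \mapsto (a,\, a-b)$. Hence the count equals $(\#\mathcal{S}_{R,\delta})^{2}$, and in particular it is $\lesssim M^{2} = \max\{\, R^{\,2(n-c(\al))},\, (R^{n}\delta)^{2} \,\}$. (One may also note that the two constraints force $\Vert p\Vert_\al \lesssim R$, since $\Vert\cdot\Vert_\al$ obeys a quasi-triangle inequality and $p = m - (m-p)$; this places us in the regime $0 < \Vert p \Vert_\al \leq CR$ of \eqref{dream_HNBIntersect}, although it is not needed for the bound.)

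It then remains only to check that $c(\al) = \frac{\gamma(\al)}{\,n-1-\gamma(\al)\,}$. Since $d \geq 1$ forces $n \geq 3$ and $\al \geq 2$ is an integer, the case split of \eqref{Heisenberg_count} reads $c(\al) = \frac{1}{\al-1}$ for $\al \geq 2(n-1)$, $c(\al) = \frac{1}{2n-3}$ for $3 \leq \al < 2(n-1)$, and $c(2) = \frac{n-2}{n}$. Substituting, respectively, $\gamma(\al) = \frac{n-1}{\al}$ (the range $\al \geq 2(n-1)$ being exactly where $\frac{n-1}{\al} \leq \frac12$), $\gamma(\al) = \frac12$, and $\gamma(2) = \frac{n-2}{2}$ into $\frac{\gamma}{\,n-1-\gamma\,}$ returns these three values after a one-line simplification in each case, which yields precisely the exponent $2\big(n - \frac{\gamma(\al)}{n-1-\gamma(\al)}\big)$ in the statement. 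I do not expect any genuine obstacle here; the only point that demands a little care is matching the two defining formulas for $\gamma(\al)$ to the three cases of Theorem~\ref{Heisenberg}, i.e.\ verifying that $\al = 2(n-1)$ is exactly the threshold at which $\min\{\frac{n-1}{\al},\tfrac12\}$ switches branches.
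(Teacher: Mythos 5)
Your proposal is correct and follows essentially the same route as the paper: the paper likewise identifies the pair count with the product $\#\mathcal{S}_{R,\delta}\cdot\#\mathcal{S}_{R,\delta}$ (via the same change of variables $p\mapsto m-p$) and then squares the bound of Theorem~\ref{Heisenberg}, with your exponent check $\tfrac{\gamma(\al)}{n-1-\gamma(\al)}\in\{\tfrac{1}{\al-1},\tfrac{1}{2n-3},\tfrac{n-2}{n}\}$ confirming the match between the two formulations. If anything, your statement of the bijection is slightly more careful than the paper's ``is simply'' phrasing.
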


The proof is very short so we include it here.  
\begin{proof}[Proof of Corollary~\ref{HNBInt}]
Observe that 
\begin{equation} \label{IntSet}
\#\big(\big\{(m,p) \in \mathbb{Z}^{n} \ \mathsf{x} \ \mathbb{Z}^{n} \!:  |\Vert m \Vert_\al - R| \!\leq \delta \text{ and } |\Vert m-p \Vert_\al - R| \!\leq \delta \big\}\big)
\end{equation}
is simply 
\begin{equation} \label{setProd}
\big(\#\big\{m \in \mathbb{Z}^{n} \!:\!  |\Vert m \Vert_\al  - R| \!\leq \delta \big\} \big) \!
\cdot \!\big(\#\big\{p \in \mathbb{Z}^{n} \! : \! |\Vert p \Vert_\al - R| \!\leq \delta \big\}\big)\!.
\end{equation}
From Theorem~\ref{Heisenberg}, we know that the size of each of these sets is simply bounded by 
 \begin{equation}
 \#\big( \{ m \in \mathbb{Z}^{n} :  
R-\delta \leq \Vert m \Vert_\alpha \leq R+ \delta \}\big) 
 \lesssim \max\{ R^{n - \frac{ \gamma(\al) }{n-1 - \gamma(\al)   }},       R^n\delta\}.
 \end{equation} 
Thus, \eqref{HNBInt} $\lesssim \max\{ R^{2 \left(n - \frac{ \gamma(\al) }{n-1 - \gamma(\al)   }\right)},       \left(R^n\delta\right)^2 \}$.
\end{proof}
\vs

\begin{remark}
We note that while the proof of Corollary \ref{HNBInt} is very short, it sets the stage for a deeper investigation of lattice points near the intersection of two more general surfaces.  In light of recent developments on $L^2$ bounds on operators of the form $\sigma*\mu$ (see \cite{BIT, ITtrees}), it would be interesting to study intersections of two surfaces in the non-isotropic setting investigated in \cite{IT}.  
\end{remark} 
\vs

\subsection{Structure}\label{structure_sec}
Theorem~\ref{Heisenberg} is proved in Sections \ref{Method}-\ref{GeoSum}. 
We first reduce the proof to two main components: (1) a decay estimate and (2) an energy estimate. These preliminary reductions are done in Section~\ref{Method}. The decay estimates, along with the necessary curvature computations, appear in Section~\ref{Curvature&Decay}.
Section~\ref{GeoSum} is dedicated to bounding the energy integral. 
In Section \ref{sharp_section}, we demonstrate that Theorem \ref{Heisenberg} is sharp provided $\de$ is bounded below and give an improvement for smaller $\de $.

\subsection*{Acknowledgments}
We would like to acknowledge Professor Allan Greenleaf at the University of Rochester for sharing invaluable insights and feedback that greatly improved our article.  Thank you for your endless patience and kindness.  
\vskip.125in

%%%%%%%%%%%%%%%%METHOD
\section{Method: From Lattice Points to Fractal Geometry}\label{Method}
In this section, we use a series of lemmas to break down the proof of Theorem \ref{Heisenberg} and reduce matters to two key propositions.  Throughout, $n=2d+1$, where $d\geq 1$ is an integer. 
\vs

As a technical point, in order to prove Theorem \ref{Heisenberg}, we will first establish that 
\begin{equation}\label{star}
q^{-n}\#( \{ (n,m) \!\in \mathbb{Z}^n \ \mathsf{x} \ \mathbb{Z}^n : \Vert n \Vert_\alpha, \Vert m \Vert_\alpha \leq Cq^a, \, \abs{ \Vert n-m \Vert_\alpha - q^a} \leq q^{a-\tau} \}) \lesssim q^{n-\tau},
\end{equation}
for 
$\tau \in \left(a,    \frac{ (n-1)a}{ n-1-\ga(\al)}   \right]$, where $\ga(\al) $ is defined as in Theorem \ref{Heisenberg} (and derived in Proposition \ref{alphaDecay}) below). 
Setting $R=q^a$ and $\de = q^{a-\tau}$, then plugging in values of $\tau$ yields the result; further details of this deduction are given in Section \ref{reduction_section}. 
Regarding the bounds on $\tau$, the lower bound $a< \tau$ results in the natural restriction that $\de< 1$, and the upper bound on $\tau$ is a requirement of our proof technique and will make an appearance in Section \ref{GeoSum}.

\subsection{An overview of the proof of \eqref{star}}

We first truncate and scale the lattice according to the geometry of the surfaces of the Heisenberg spheres defined in \eqref{surfaces}. 
Next, we define a measure $\mu_q$ on the resulting scaled and thickened lattice (see Definition \ref{defMuQalpha}).  
Through a series of lemmas, we reduce the proof of \eqref{star} to that of establishing Proposition \ref{alphaDecay} (a decay estimate) and Proposition \ref{energy} (an energy bound). 

%%%%%%%%%%%%%%%%%%%%%REDUCTION DETAILS
\subsection{Reduction of the proof } 
\label{reductSec}
We rely on the following construction, which is similar to that used in \cite{IT}.  
Recall $n=2d+1$ and consider the truncated set of lattice points:
\begin{equation}
 L_{n,q}:= \big\{(b_1, \dots, b_n) \! : \! b_i \in \{0,1, \dots, \lceil q^a \rceil \}, i \neq n, \!\text{ and } b_n \in \{0,1, \dots, \lceil q^{2a} \rceil \} \!\big\},
\end{equation}
where $a$ is chosen so that the exponents sum to the dimension: $a(n-1) + 2a = n$; namely,  
$$a := \frac{n}{n+1}.$$

As thickening the surface to perform the lattice point count on the left-hand-side of \eqref{star} is in some sense equivalent to first ``thickening'' the lattice, we 
consider the $q^{a-\tau}$-neighborhood of $L_{n,q}$ for $\tau$ in a range to be determined.  
As it will lend better to the geometry, we replace each $(q^{a-\tau})$-\textit{ball} by a $(q^{a-\tau})$-\textit{box} with the same center.
\vs

Scaling $L_{n,q}$ down into the unit box, we set
\begin{equation}\label{EqDef}
E_q:= \bigcup_{(b_1, \dots, b_n) \in L_{n,q}} \left\{ R_\tau + \left(\frac{b_1}{q^a}, \dots, \frac{b_{2d}}{q^a}, \frac{b_n}{q^{2a}}\right)\right\},
\end{equation}
where $R_\tau$ denotes the $q^{-\tau} \ \mathsf{x} \cdots \mathsf{x} \ q^{-\tau} \ \mathsf{x} \ q^{-a-\tau}$-rectangular box centered at the origin in $\mathbb{R}^{n}$. 
Note $E_q$ is composed of approximately $q^n$ such rectangles.  
\begin{figure}[ht]
  \centering
  \includegraphics[width=\linewidth]{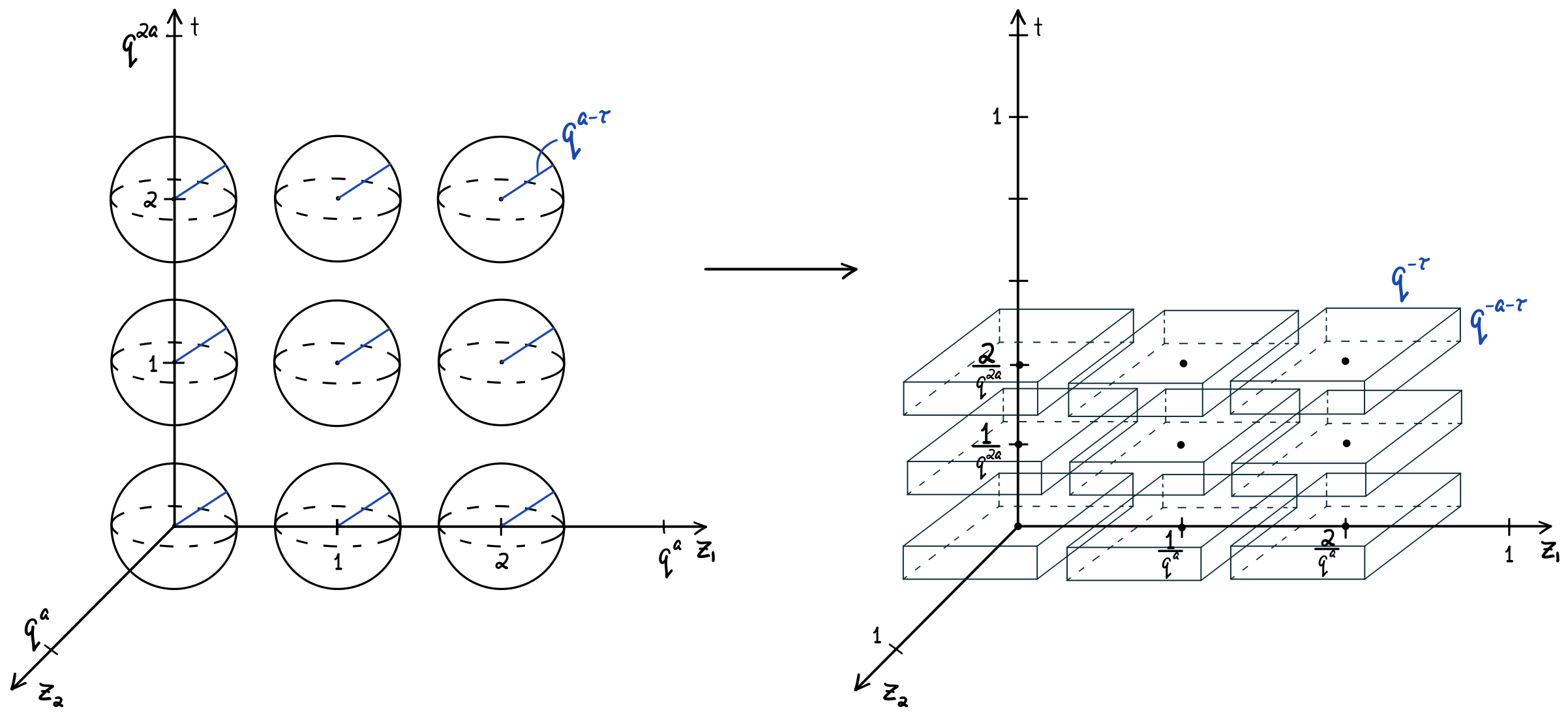}
\caption{Example of $L_{3,q}$ and the scaling to $E_q$.}\label{latticeScale}
\end{figure}
To avoid overlap amongst the rectangles, we require $\tau$ to satisfy 
\begin{equation}\label{first_tau_restriction}
a:= \frac{n}{n+1}< \tau.
\end{equation}
Denote the volume of $R_\tau$ by $V_R$ and observe that
\begin{equation} \label{volRect}
V_R := vol(R_\tau) = (q^{-\tau})^{n-1}q^{a-\tau}=q^{-n\tau-a}.
\end{equation}

We now define a probability measure, $\mu_q$, on $E_q$.  While the definition of $\mu_q$ is quite technical looking, it is simply a normalized and smoothed version of the indicator function of the set $E_q$.
\begin{definition}[Probability measure on $E_q$]\label{defMuQalpha}
For $x \in \mathbb{Z}^{2d+1}$ and $\tau$ satisfying \eqref{first_tau_restriction}, we define $\mu_q$ 
by:
\begin{equation*}
\mu_q(x) \! := 
\frac{1}{\abs{E_q}} \sum_{b \in \mathbb{Z}^n} \left[\prod_{i=1}^{2d} \psi_0\!\left(\frac{b_i}{q^a}\right)  
\psi_0\!\left(\!q^{\tau}\! \left(x_i-\frac{b_i}{q^a}\right)\!\right)\right] \! \psi_0\!\left(\frac{b_n}{q^{2a}}\right) \psi_0\!\left(\!q^{a+\tau}\! \left(x_n-\frac{b_n}{q^{2a}}\right)\!\right) \!,
\end{equation*}
where $\psi_0$ is a bump function supported on the unit ball. 
For $A\subset E_q$, set 
$\int_A d\mu_q(x) = \int_A \mu_q(x) dx.$ 
\end{definition}

The following lemma transforms the
lattice point counting problem to one of bounding the $
\mu_q\times \mu_q$-measure of 
pairs of points 
spaced approximately a distance $1$ apart.

%%%%%%%%%%%%%%%DECAY TO MEASURE LEMMA
\begin{lemma}[Approximating points by measure]\label{DecayEquivHNB}
\begin{align}
q^{-n}\#\big(\big\{ (n,&m) \in \mathbb{Z}^{n} \ \mathsf{x} \ \mathbb{Z}^{n} : \Vert n \Vert_\alpha, \Vert m \Vert_\alpha. \leq Cq^a, \abs{ \Vert n-m \Vert_\alpha - q^a} \leq q^{a-\tau} \big\}\big) \nonumber\\
&\sim q^n \mu_q  \ \mathsf{x} \ \mu_q \left( \big\{ (x,y) \in E_q \ \mathsf{x} \ E_q : \abs{\Vert x-y \Vert_{\alpha} - 1} \leq q^{-\tau} \big\} \right) \label{measFS}
\end{align}
\end{lemma}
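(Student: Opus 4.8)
The plan is to establish the two-sided equivalence \eqref{measFS} by unwinding the definitions of $L_{n,q}$, $E_q$, and $\mu_q$, and tracking how the anisotropic scaling $(z,t) \mapsto (z/q^a, t/q^{2a})$ interacts with the Heisenberg norm. The key observation is that this scaling is exactly the Heisenberg dilation $\varphi_{q^{-a}}$ up to the truncation, and the Heisenberg norm is homogeneous of degree $1$ under it: $\|\varphi_{q^{-a}}(z,t)\|_\alpha = q^{-a}\|(z,t)\|_\alpha$. Hence for lattice points $n, m \in \mathbb{Z}^n$ with $b = n - m$, writing $\tilde b = (b_1/q^a, \dots, b_{2d}/q^a, b_n/q^{2a})$, we have $\|\tilde b\|_\alpha = q^{-a}\|n-m\|_\alpha$, so the condition $\bigl|\|n-m\|_\alpha - q^a\bigr| \le q^{a-\tau}$ becomes exactly $\bigl|\|\tilde b\|_\alpha - 1\bigr| \le q^{-\tau}$, which is the condition defining the set on the right-hand side of \eqref{measFS} at the centers of the boxes.

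\medskip

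\noindent First I would record that $|E_q| \sim q^n \cdot V_R = q^{-a}$ by \eqref{volRect} and the fact that $L_{n,q}$ has $\sim q^n$ elements with no overlap among the rectangles (guaranteed by \eqref{first_tau_restriction}), and that $\mu_q$ is a genuine probability measure — the product of bump functions $\psi_0(q^\tau(x_i - b_i/q^a))$ and $\psi_0(q^{a+\tau}(x_n - b_n/q^{2a}))$ is, after normalization by $|E_q|$, a smooth approximation of $\mathbf{1}_{E_q}/|E_q|$, with the cutoffs $\psi_0(b_i/q^a)$, $\psi_0(b_n/q^{2a})$ enforcing the truncation $b \in L_{n,q}$ (i.e. $\|n\|_\alpha, \|m\|_\alpha \lesssim q^a$). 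Then I would expand $\mu_q \times \mu_q$ of the set $\{(x,y) \in E_q \times E_q : |\|x-y\|_\alpha - 1| \le q^{-\tau}\}$ as a double sum over $b, b' \in \mathbb{Z}^n$ of integrals of products of bump functions, where $x$ ranges over the box centered at $\tilde b$ and $y$ over the box centered at $\tilde b'$. Each such integral contributes $\sim V_R^2 / |E_q|^2 = q^{2n\tau}/q^{-2n\tau - 2a}\cdot$(normalization) — more precisely, a single pair of boxes contributes $\sim (V_R/|E_q|)^2 = q^{-2n\tau}/q^{-2a}\cdot q^{-2a}\cdot\ldots$; the bookkeeping here is routine once the normalization is fixed.

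\medskip

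\noindent The heart of the matter is that the constraint $|\|x-y\|_\alpha - 1| \le q^{-\tau}$ with $x, y$ ranging over boxes of sidelength $\sim q^{-\tau}$ (and $\sim q^{-a-\tau}$ in the last coordinate) is, up to constants absorbed into the $\sim$, equivalent to the constraint $|\|\tilde b - \tilde b'\|_\alpha - 1| \le Cq^{-\tau}$ on the centers. This requires a Lipschitz-type estimate: moving $x$ within a box of size $q^{-\tau}$ changes $\|x - y\|_\alpha$ by $O(q^{-\tau})$, which follows because $\|\cdot\|_\alpha$ is locally Lipschitz away from the origin and the relevant points have norm $\sim 1$, so the error is comparable to the box size. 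Here one must be slightly careful about the last coordinate, where the box is thinner ($q^{-a-\tau} \le q^{-\tau}$ since $a > 0$), so it only helps. Thus the number of center-pairs $(\tilde b, \tilde b')$ satisfying the relaxed constraint is comparable (both $\lesssim$ and $\gtrsim$, after possibly shrinking/enlarging $C$) to the number satisfying the original lattice constraint, which by translation $b \mapsto b - b'$ and the truncation is $\sim \#\{(n,m): \|n\|_\alpha, \|m\|_\alpha \lesssim q^a, |\|n-m\|_\alpha - q^a| \le q^{a-\tau}\}$. Multiplying the count of valid center-pairs by the per-pair measure contribution and by the extra factor $q^n$ on the right, and comparing with $q^{-n}$ times the lattice count on the left, yields \eqref{measFS}.

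\medskip

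\noindent \textbf{The main obstacle} I anticipate is making the passage between ``box constraint'' and ``center constraint'' genuinely two-sided with clean constants — the upper bound (box $\Rightarrow$ slightly relaxed center constraint) is immediate from the triangle-type inequality for $\|\cdot\|_\alpha$, but the lower bound requires that a positive proportion of each pair of boxes actually realizes norms within $q^{-\tau}$ of $1$ when the centers do, which uses that $\psi_0$ is bounded below on a fixed smaller ball and that the map $(x,y) \mapsto \|x-y\|_\alpha$ has non-degenerate behavior (nonzero gradient) near norm-$1$ points, so the sublevel set has the expected measure inside each box. One also has to handle the mild anisotropy of the boxes uniformly in $q$. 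None of this is deep, but it is where the care goes; everything else is definition-chasing and volume bookkeeping.
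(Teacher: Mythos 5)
Your proposal follows essentially the same route as the paper: rescale by the anisotropic dilation (under which $\|\cdot\|_\alpha$ is homogeneous of degree one), identify the lattice count with the number of rectangle pairs meeting the thickened-sphere condition, and convert that covering count into $\mu_q \times \mu_q$-measure via the per-pair contribution $(V_R/|E_q|)^2 = q^{-2n}$; the paper records exactly this as a chain of covering-number equivalences and is, if anything, terser than you are about the two-sided Lipschitz comparison between the box constraint and the center constraint. The only caveat is that your intermediate volume bookkeeping is garbled ($|E_q| = N_R\cdot V_R = q^{n-n\tau-a}$, not $q^{-a}$), but since only the ratio $V_R/|E_q| = q^{-n}$ enters the final comparison, the conclusion is unaffected.
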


\begin{proof}
$E_q$ is the support of $\mu_q$, as described above. Let $N_C(S, \rho)$ be the number of cubes of side-length $\rho$ and $N_R(S,\rho_1, \rho_2)$ be the number of $\rho_1  \ \mathsf{x} \cdots \mathsf{x} \ \rho_1  \ \mathsf{x}  \ \rho_2$ rectangular boxes required to cover a set $S$. Now, defining $\tau_q^{\alpha}(x) =(q^ax_1, \dots, q^ax_{2d}, q^{2a}x_{n})$, we claim:
\begin{align}
\label{MeasFSsim0}
(&q^{-n})^2 \# \left(\big\{(n,m) \in \mathbb{Z}^n \mathsf{x} \ \mathbb{Z}^n : \Vert n \Vert_{\alpha}, \Vert m \Vert_{\alpha} \!\leq Cq^a, \abs{\Vert n-m \Vert_\alpha - q^a} \leq q^{a-\tau} \big\}\right)
\\
&\sim (q^{-n})^2N_C \left(\big\{(u, v) \in \tau_q^\alpha(E_q) \ \mathsf{x} \ \tau_q^\alpha(E_q) :  \abs{\Vert u-v \Vert_\alpha - q^a} \leq q^{a-\tau} \}, q^{a-\tau}\right) 
\label{MeasFSsim1}\\
&\sim (q^{-n})^2N_R \left(\big\{(x,y) \in E_q \ \mathsf{x} \ E_q :  \abs{\Vert x-y \Vert_\alpha - 1} \leq q^{-\tau} \}, q^{-\tau}, q^{-a-\tau}\right)
\label{MeasFSsim2}\\
&\sim \mu_q \ \mathsf{x} \ \mu_q \left(\big\{(x,y) \in E_q \ \mathsf{x} \ E_q : \abs{\Vert x-y \Vert_\alpha - 1} \leq q^{-\tau} \big\}\right)
\label{MeasFSsim3}
\end{align}
\end{proof}

%%%%%%%%%%%%%%%%%%%%FOURIER ANALYSIS STEP
Next, using elementary Fourier analysis, we have the following equivalence. 

\begin{lemma}  \label{FALem}
Let $\sigma_{\alpha}$ denote the surface measure on the Heisenberg sphere defined in \eqref{surfaces} with $R=1$ (a formal discussion of surface measures is given below in Section \ref{SurfMeasDecay}). It follows that
\begin{equation}\label{Fourier}
 \mu_q  \ \mathsf{x} \ \mu_q \left( \big\{ (x,y) \in E_q \ \mathsf{x} \ E_q : \abs{\Vert x-y \Vert_{\alpha} - 1} \leq q^{-\tau} \big\} \right)
\! \sim \! q^{-\tau} \int \abs{\widehat{\mu}_q(\xi)}^2 \widehat{\sigma}_\alpha(\xi) d\xi. 
\end{equation}
\end{lemma}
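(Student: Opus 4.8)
The plan is to prove Lemma~\ref{FALem} by expressing the $\mu_q\times\mu_q$-measure of the annular shell $\{|\,\|x-y\|_\al - 1\,|\le q^{-\tau}\}$ as an integral against a mollified surface measure, and then recognizing the result via Plancherel. First I would introduce, for the unit Heisenberg sphere $\partial B_1^\al$ and its surface measure $\sigma_\al$, a smooth approximate identity: let $\phi$ be a nonnegative bump function with $\int\phi = 1$ supported in $[-1,1]$, and set $\phi_{q^{-\tau}}(s) = q^{\tau}\phi(q^{\tau}s)$. The key geometric observation is that the indicator of the shell $\{(x,y): |\,\|x-y\|_\al-1|\le q^{-\tau}\}$ is comparable (up to absolute constants, enlarging the shell slightly) to $q^{-\tau}$ times an expression of the form $(\phi_{q^{-\tau}} * d\sigma_\al^{\mathrm{rad}})$ evaluated at $x-y$, where $d\sigma_\al^{\mathrm{rad}}$ denotes the measure obtained by radially convolving $\sigma_\al$ in the $\|\cdot\|_\al$-gauge; concretely, because $\|\cdot\|_\al$ is homogeneous of the Heisenberg-dilation type, a $q^{-\tau}$-thickening of the sphere in the gauge metric has $\sigma_\al$-measure comparable to $q^{-\tau}$ of the region, so integrating $\mu_q\times\mu_q$ over the shell is $\sim q^{-\tau}\int\int (d\sigma_\al)(x-y)\,d\mu_q(x)\,d\mu_q(y)$ after the mollification is absorbed.

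Next I would carry out the Fourier-analytic step. Writing $\nu_q$ for the pushforward measure $x-y \mapsto$ under $\mu_q\times\mu_q$, i.e. the autocorrelation $\mu_q * \widetilde{\mu_q}$ where $\widetilde{\mu_q}(x) = \mu_q(-x)$, we have $\widehat{\nu_q}(\xi) = |\widehat{\mu_q}(\xi)|^2$ (using that $\mu_q$ is real-valued, so $\widehat{\widetilde{\mu_q}} = \overline{\widehat{\mu_q}}$). Then
\begin{equation*}
\int\int (d\sigma_\al)(x-y)\,d\mu_q(x)\,d\mu_q(y) = \int d\sigma_\al \, d\nu_q = \int \widehat{\sigma_\al}(\xi)\,\overline{\widehat{\nu_q}(\xi)}\,d\xi = \int \widehat{\sigma_\al}(\xi)\,|\widehat{\mu_q}(\xi)|^2\,d\xi,
\end{equation*}
by Parseval's formula (both $\sigma_\al$ and $\nu_q$ are compactly supported finite measures, so their Fourier transforms are bounded smooth functions and all integrals converge once one checks decay — which is exactly what Proposition~\ref{alphaDecay} provides for $\widehat{\sigma_\al}$, while $\widehat{\mu_q}$ is Schwartz-like off a bounded set because $\mu_q$ is a finite sum of translated smooth bumps). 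Combining with the geometric reduction gives the claimed $\sim q^{-\tau}\int |\widehat{\mu_q}(\xi)|^2\,\widehat{\sigma_\al}(\xi)\,d\xi$, and the fact that $\widehat{\sigma_\al}\ge 0$ at the frequencies that matter — or more safely, that one works with a mollified $\sigma_\al$ whose transform is a nonnegative bump times $\widehat{\sigma_\al}$ — ensures the right-hand side is a genuine upper bound for the left-hand side as needed for the $\lesssim$ direction; the $\gtrsim$ direction is immediate from the reverse comparison in the geometric step.

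I expect the main obstacle to be the geometric comparison in the first step: justifying that a $q^{-\tau}$-neighborhood of $\partial B_1^\al$ \emph{in the ambient Euclidean box geometry used to define} $E_q$ and $R_\tau$ (with its anisotropic $q^{-\tau}\times\cdots\times q^{-\tau}\times q^{-a-\tau}$ scaling) matches, up to constants uniform in $q$, the gauge-metric shell $\{|\,\|x-y\|_\al-1|\le q^{-\tau}\}$, and that the surface measure $\sigma_\al$ is the correct normalizing factor. This requires care because $\|\cdot\|_\al$ scales differently in the $t$-variable than in the $z$-variables, so one must check that near $\|(z,t)\|_\al = 1$ the gradient of $\|\cdot\|_\al$ is bounded above and below (away from the finitely many degenerate directions, whose contribution is lower-order and controlled), making the Euclidean width of the shell comparable to $q^{-\tau}$ in the relevant directions. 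Once this normalization is pinned down, the Fourier step is routine Parseval, and the positivity of $\widehat{\sigma_\al}$ on the mollified version makes the passage to the stated integral clean.
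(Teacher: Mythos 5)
Your proposal is correct and follows essentially the same route as the paper: replace the indicator of the gauge-annulus $\{|\,\Vert x-y\Vert_\al-1|\le q^{-\tau}\}$ by $q^{-\tau}$ times a mollified copy of $\sigma_\al$, then pass to the Fourier side to obtain $q^{-\tau}\int|\widehat{\mu}_q|^2\,\widehat{\sigma}_\al$; the paper does the second step via Fourier inversion and Fubini (keeping the factor $\widehat{\rho}(\epsilon\xi)$ and then noting it is $\sim 1$), which is the same computation as your Parseval argument on the autocorrelation $\mu_q*\widetilde{\mu_q}$. Your flagged concerns (the anisotropic geometric comparison and the sign of $\widehat{\sigma}_\al$ when dropping the mollifier) are real but are treated no more carefully in the paper itself.
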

\begin{proof}
Let $\epsilon = q^{-\tau}$. Then
\begin{align}
 \mu_q \ \mathsf{x} \ \mu_q & \left( \{ (x,y) \in \mathbb{Z}^{n} \ \mathsf{x} \ \mathbb{Z}^{n}: 1 - \epsilon \leq  \Vert x-y \Vert_\alpha  \leq 1+ \epsilon \} \right) \\
&=\iint_{ \{x : \abs{\Vert x-y \Vert_\alpha - 1} < \epsilon\}} d\mu_q(x)d\mu_q(y) \\
&=  \iint \chi_{A_{1,\epsilon}}(x-y)d\mu_q(x)d\mu_q(y), \label{chiAInt}
\end{align}
where $A_{1,\epsilon}:= \{z : \abs{\Vert z \Vert_\alpha - 1} < \epsilon\}$. \\

A computation shows that 
$\frac{1}{\epsilon} \chi_{A_{1,\epsilon}}(\cdot) \sim \sigma_{\alpha} \!*\! \rho_{c\epsilon}(\cdot)$, 
where $\rho$ is a non-negative smooth bump function satisfying 
$spt(\rho) \subseteq B_2(0)$ and $\rho \equiv 1$ on $B_1(0)$, 
and $\rho_\epsilon(\cdot) = \left(\frac{1}{\epsilon}\right)^n \rho \!\left(\frac{\cdot}{\epsilon}\right)$.
Now, through this relation, we may rewrite \eqref{chiAInt} as approximately
\begin{equation*}
\epsilon\iint \sigma_\alpha \!*\! \rho_\epsilon(x-y)d\mu_q(x)d\mu_q(y).
\end{equation*}
Using Fourier inversion and Fubini, we have
\begin{align}
\eps \iint \sigma_\alpha \!*\! \rho_\epsilon(x-y)d\mu_q(x)d\mu_q(y)
&= \eps \iiint \widehat{\sigma_\alpha \!*\! \rho_\epsilon}(\xi)e^{2\pi i \xi(x-y)} d\xi d\mu_q(x)d\mu_q(y)\\
&= \eps \int  \widehat{\sigma}_\alpha(\xi) \widehat{\rho}(\epsilon\xi) \widehat{\mu}_q(\xi)   \overline{\widehat{\mu}_q(\xi)} d\xi.\\
&= \eps \int \abs{\widehat{\mu}_q(\xi)}^2 \widehat{\sigma}_\alpha(\xi) \widehat{\rho}(\epsilon\xi)  d\xi.  \label{FAStep}
\end{align}

Finally, plugging $\epsilon = q^{-\tau}$ and observing that $\Vert \widehat{\rho} \Vert \sim 1$, 
\eqref{Fourier} follows.
\end{proof}

Next, we require an estimate on $\widehat{\sigma}_\al$.  
Due to the presence of vanishing curvature, known oscillatory integral estimates are not sufficient for our purposes, 
and we will prove the following proposition. 
Curvature and its role in oscillatory integral estimates is the topic of Section \ref{Curvature&Decay}. 
\vs

%%%%%%%%%%%%%%%%%%%%%%%%%HNB DECAY PROPOSITION
\begin{proposition}[Decay estimate for Heisenberg surface measure]\label{alphaDecay}
With 
$\sigma_{\alpha}$ as in Lemma \ref{FALem}, 
$$\abs{\widehat{\sigma}_\alpha(\xi)} \lesssim (1+ \abs{\xi})^{-\ga(\al) },$$
where  $\ga(\al) = \min\!\left\{\frac{n-1}{\al}, \frac{1}{2}\right\}$ when $\al \geq 3$, 
and 
$\ga(\al) = \frac{n-2}{2}$ when $ \al =2$. 
\end{proposition}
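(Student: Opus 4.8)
The plan is to parametrize the Heisenberg sphere $\partial B_1^\al$ as a surface of revolution and reduce the estimate on $\widehat{\sigma}_\al$ to a one-dimensional oscillatory integral to which stationary phase applies. Writing a point of the sphere as $(z,t)$ with $|z|=r$ and $|t|^{\al/2}=1-r^\al$, i.e. $t = \pm(1-r^\al)^{2/\al}$ for $r\in[0,1]$, the sphere is the graph (over the $z$-ball of radius $1$) of $t=\pm(1-|z|^\al)^{2/\al}$. Using the rotational symmetry in the $z$-variables, one first reduces $\widehat{\sigma}_\al(\xi)$ to an integral over $r\in[0,1]$ involving a Bessel-type factor coming from integration over the sphere $|z|=r$ in $\R^{2d}$, together with the oscillatory factor $e^{-2\pi i \xi_n t(r)}$ in the $t$-direction; alternatively, and more in the spirit of the paper's stated preference to avoid Bessel functions, one writes $\widehat{\sigma}_\al(\xi) = \int_{|z|\le 1} e^{-2\pi i(\xi'\cdot z + \xi_n \phi(|z|))} J(|z|)\,dz$ where $\phi(r) = (1-r^\al)^{2/\al}$ and $J$ is a smooth Jacobian factor (bounded, vanishing appropriately). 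The decay then splits into two regimes according to which of $|\xi'|$ (the frequency in the $z$-directions) or $|\xi_n|$ dominates.

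The main steps are then as follows. First, \emph{curvature analysis near the pole} ($r=0$, where $t$ is extremal): one computes that $\phi(r) = (1-r^\al)^{2/\al} = 1 - \tfrac{2}{\al}r^\al + \dots$, so the surface is flat to order $\al$ at the poles $(0,\pm 1)$ — this is the point of maximal vanishing curvature referenced in Section~\ref{Curvature&Decay}. Near such a point, a van der Corput / stationary-phase estimate for a phase behaving like $r^\al$ in $2d$ radial dimensions yields decay of order $|\xi|^{-(2d)/\al}\cdot(\text{correction})$; carefully accounting for the $2d$-dimensional integration (effectively $r^{2d-1}\,dr$ against $e^{i c \xi_n r^\al}$) one gets the contribution $|\xi|^{-(n-1)/\al}$, which explains the first branch of $\ga(\al)$. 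Second, \emph{the region away from the poles}, where the Gaussian curvature of the surface is nonvanishing: there the classical stationary phase estimate for a hypersurface in $\R^n$ with $k$ nonzero principal curvatures gives decay $|\xi|^{-k/2}$; here one checks that away from the two poles and the equator the surface has full-rank second fundamental form in the directions transverse to the rotation axis after accounting for the one flat direction along the sphere $|z|=r$, giving the competing rate $|\xi|^{-1/2}$. Taking the minimum of the two rates yields $\ga(\al) = \min\{(n-1)/\al,\ 1/2\}$ for $\al\ge 3$. Third, \emph{the case $\al=2$}: here $\partial B_1^2$ is $\{|z|^2+|t|=1\}$, a paraboloid-like cap in the $t$-direction, smooth and with curvature vanishing only in the single axial direction; the surface measure is (up to a smooth factor) that of a piece of a paraboloid of dimension $n-1$, whose Fourier transform decays like $|\xi|^{-(n-1)/2}$ in generic directions but only like $|\xi_n|^{-(n-2)/2}$ along the degenerate axis, where one loses half a power from the single flat direction; hence $\ga(2) = (n-2)/2$. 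Finally, one \emph{patches} the estimates with a smooth partition of unity subordinate to (i) a neighborhood of each pole, (ii) a neighborhood of the equator $r=1$ (where $\phi'(r)\to\infty$, so one should instead parametrize by $t$ there and observe the same curvature behavior), and (iii) the remaining compact piece, and takes the worst bound; the trivial bound $|\widehat{\sigma}_\al(\xi)|\le \|\sigma_\al\|$ handles $|\xi|\lesssim 1$, absorbed into the $(1+|\xi|)^{-\ga}$ normalization.

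The hard part will be the uniform stationary-phase estimate near the poles when $\al$ is large and the phase is degenerate to order exactly $\al$: one must track how the decay rate depends on the direction of $\xi$ (pure axial $\xi = (0,\xi_n)$ versus mixed), ensure the $2d$-fold radial integration is handled correctly (a scaling argument $r \mapsto |\xi_n|^{-1/\al}r$ combined with control of the resulting Schwartz tails is the cleanest route, rather than invoking higher-dimensional van der Corput lemmas directly), and verify that the transition region between "near pole" and "nonvanishing curvature" does not produce a worse bound. A secondary technical nuisance is the equator $r=1$, where the graph parametrization degenerates; switching roles of $z$ and $t$ there, or using the homogeneity/dilation structure $\varphi_a$, shows the curvature there is comparable to the generic case and contributes $|\xi|^{-1/2}$, so it is not the bottleneck. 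Everything else is routine once the phase expansion $\phi(r)=1-\tfrac{2}{\al}r^\al+O(r^{2\al})$ and the Jacobian behavior are pinned down.
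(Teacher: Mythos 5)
Your overall architecture for $\al\ge 3$ matches the paper's: a graph parametrization $t=\varphi(z)=(1-\abs{z}^\al)^{2/\al}$ near the poles, the expansion $\varphi = 1-\tfrac{2}{\al}\abs{z}^\al+O(\abs{z}^{2\al})$, reduction to a radial oscillatory integral split according to whether the axial or tangential frequency dominates, and the classical Hlawka/Littman curvature estimates away from the poles, with the final bound being the minimum of the exponents. The one caveat there is that your ``scaling $r\mapsto\abs{\xi_n}^{-1/\al}r$ plus Schwartz tails'' route only cleanly treats the pure axial direction; for mixed $\xi$ the critical point $r_0\sim(\abs{\xi'}/\abs{\xi_n})^{1/(\al-1)}$ migrates away from the origin, and the paper handles this with an explicit $\epsilon$-neighborhood of $r_0$ (of size depending on whether $r_0\lessgtr\abs{\xi_3}^{-1/\al}$) together with integration by parts using a lower bound on $\varphi_\theta'$ outside it. You correctly flag this as the hard part, so I regard it as an incomplete step rather than a wrong one.

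The genuine gap is your treatment of $\al=2$. A cap $t=\pm(1-\abs{z}^2)$ is a paraboloid: \emph{all} $n-1$ of its principal curvatures are nonzero, there is no ``single axial direction'' in which curvature vanishes, and the Fourier transform of a smoothly cut-off interior piece decays like $\abs{\xi}^{-(n-1)/2}$ in every direction --- there is no direction along which it drops to $\abs{\xi_n}^{-(n-2)/2}$. If your reasoning were carried out as written it would therefore yield $\ga(2)=(n-1)/2$ and miss the true obstruction. The exponent $(n-2)/2$ comes from the equator $\{\abs{z}=1,\ t=0\}$, precisely the region you dismiss as ``not the bottleneck'': there the two caps meet in a non-smooth edge (the local defining function $z_1=[(1-\abs{t})-(z_2^2+\cdots+z_{2d}^2)]^{1/2}$ is not differentiable in $t$ at $t=0$), so only the $n-2$ principal curvatures in the directions tangent to the equatorial sphere are available, and the paper applies Corollary~\ref{Cor3.2} with $m=n-2$ there (Theorem~\ref{Thm3.1} handles the poles and the rest of the surface, where the Gaussian curvature is nonzero for $\al=2$). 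Your dismissal of the equator is correct for $\al\ge 3$, where $(n-2)/2\ge 1/2\ge\ga(\al)$, but for $\al=2$ the equator is exactly where the loss occurs, and your proposal as written does not prove that case.
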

\vs

The proof of Proposition~\ref{alphaDecay} is found in Section~\ref{Curvature&Decay} along with a discussion on the points of vanishing curvature.  
\vs

Next, plugging the estimate of Proposition~\ref{alphaDecay} into \eqref{Fourier}, we see that
\begin{align*}
q^n \mu_q  \, \mathsf{x} \, \mu_q \big( \big\{ (x,y) \!\in \! E_q \ \mathsf{x} \ E_q \!:\! \abs{\Vert x-y \Vert_{\alpha} \!- \!1} \!\leq q^{-\tau} \big\} \big)
&\sim q^{n-\tau} \int \abs{\widehat{\mu}_q(\xi)}^2 \widehat{\sigma}_\alpha(\xi) d\xi \\
&\lesssim  q^{n-\tau}  \int \abs{\widehat{\mu}_q(\xi)}^2 (1+ \abs{\xi})^{-\gamma(\alpha)} d\xi,
\end{align*}
and using elementary properties of the Fourier transform, 
\begin{equation}\label{energy_2}
 \int_{|\xi|>1} \big|\widehat{\mu_q^\alpha}(\xi)\big|^2 \abs{\xi}^{-\gamma(\alpha)} d\xi 
\sim \! \iint \abs{x-y}^{\gamma(\alpha)-n} d\mu_q^\alpha(x)d\mu_q^\alpha(y),
\end{equation}
which is the $\big(n - \gamma(\al)\big)$-energy of the function $\mu_q$ (see \cite{Falc86} for a discussion of energy and potentials).  
\vs

In conclusion, following the string of relations from Lemma \ref{DecayEquivHNB} to \eqref{energy_2}, we have transformed our counting problem in \eqref{star} to one of bounding an energy integral.  
The following proposition gives a bound for the expression in \eqref{energy_2} provided $\tau$, the dilation factor in the definition of $\mu_q$ of Definition \eqref{defMuQalpha}, is not too large. 

\begin{proposition}[Energy integral bound]\label{energy}
Let $\mu_q$ as given in Definition~\ref{defMuQalpha} and recall $a= \frac{n}{n+1}$.
If $\al\geq 4d$ and 
$
\tau \in \left(a, \frac{a\alpha}{\alpha-1} \right],
$
 then 
$$
\iint \abs{x-y}^{\frac{n-1}{\alpha}-n}d\mu_q(x)d\mu_q(y) \lesssim 1. 
$$
\vs
\noindent
More generally, if $\al\geq 2$ and 
$
\tau \in \left(a,    \frac{ (n-1)a}{ n-1-\ga(\al)}   \right],
$
 then 
$$
\iint \abs{x-y}^{\ga(\al)-n}d\mu_q(x)d\mu_q(y) \lesssim 1,
$$
where $\ga(\al) $ is defined as in Proposition \ref{alphaDecay}. 
\end{proposition}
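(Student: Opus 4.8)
The plan is to estimate the energy integral $\iint |x-y|^{\gamma(\al)-n}\,d\mu_q(x)\,d\mu_q(y)$ by decomposing the domain of integration according to the distance $|x-y|$, using dyadic scales, and exploiting two competing facts: the density of the scaled lattice $E_q$, and the thinness of each rectangular box $R_\tau$. First I would fix a point $x \in E_q$ and partition the remaining mass of $\mu_q$ into the ``diagonal'' contribution — where $y$ lies in the same box as $x$ (or within distance $\sim q^{-\tau}$, the box's long side) — and the ``off-diagonal'' contribution at dyadic distances $q^{-\tau} \lesssim 2^{-j} \lesssim 1$. For the off-diagonal part, the key combinatorial input is a bound on the number of lattice points $b \in L_{n,q}$ whose scaled-and-translated box meets an annulus of radius $\sim 2^{-j}$ about $x$: because the lattice $E_q$, after the anisotropic scaling by $(q^{a},\dots,q^a,q^{2a})$, is a genuine integer-spacing lattice in a box of sidelength $\sim q^a$ (in the first $n-1$ coordinates) and $\sim q^{2a}$ (in the last), one counts $\lesssim (2^{-j} q^a)^{n-1}(2^{-j}q^{a})^{?}$-type quantities — more carefully, one must respect that a Euclidean ball of radius $2^{-j}$ in the unit cube pulls back under $\varphi_{q^a}$ to a region of the appropriate anisotropic shape, so the count of relevant boxes at scale $2^{-j}$ is $\lesssim \max\{1,(2^{-j}q^a)^{n}\}$ once $2^{-j} \gtrsim q^{-2a}$ (and a separate, easier regime when $2^{-j}$ is smaller than the short dimension $q^{-a}$ of the unscaled geometry). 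Each box carries $\mu_q$-mass $\sim q^{-n}$ by Definition~\ref{defMuQalpha} and the normalization $|E_q|\sim q^n V_R = q^{-n\tau}$.

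Assembling these pieces, the off-diagonal contribution becomes a sum over $j$ of (number of boxes at scale $2^{-j}$) $\times$ ($\mu_q$-mass per box)$^2/|E_q|$-type factors $\times\ 2^{j(n-\gamma(\al))}$; substituting the counts above turns this into a geometric-type series in $2^{-j}$ whose dominant endpoint is governed by the largest admissible $\tau$. The diagonal contribution, where $|x-y|\lesssim q^{-\tau}$, is handled directly: within a single box $R_\tau$ (sidelengths $q^{-\tau}$ in $n-1$ directions and $q^{-a-\tau}$ in the last), the density of $\mu_q$ is $\sim 1/(|E_q| V_R) = q^{n\tau + n\tau + a}$-ish, and $\iint_{R_\tau\times R_\tau}|x-y|^{\gamma(\al)-n}$ converges precisely because $\gamma(\al)-n > -n$, contributing a power of $q$ that one checks is $\lesssim 1$ under the stated upper bound on $\tau$. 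The two-regime structure of the hypothesis — $\al \ge 4d$ with $\gamma(\al)=\frac{n-1}{\al}$ versus general $\al\ge 2$ with $\gamma(\al)=\min\{\frac{n-1}{\al},\tfrac12\}$ (and $\frac{n-2}{2}$ when $\al=2$) — enters only through the value of the exponent $\gamma(\al)$ plugged into these estimates, so I would carry the computation with $\gamma$ a parameter and verify at the end that the constraint $\tau \le \frac{(n-1)a}{n-1-\gamma(\al)}$ is exactly what makes the dominant term bounded; the special value $\gamma=\frac{n-1}{\al}$ then yields $\tau \le \frac{(n-1)a}{n-1-(n-1)/\al} = \frac{a\al}{\al-1}$, matching the first claim.

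The main obstacle I anticipate is the lattice-point count at intermediate dyadic scales $2^{-j}$, specifically the transition between the ``fat'' regime $2^{-j}\gtrsim q^{-a}$, where the anisotropic rescaling behaves like an ordinary rescaling and the count is $\sim (2^{-j}q^a)^n$, and the ``thin'' regime $q^{-2a}\lesssim 2^{-j}\lesssim q^{-a}$, where a Euclidean $2^{-j}$-ball intersects the pulled-back lattice in a flattened slab and the count drops to something like $(2^{-j}q^a)^{n-1}\cdot(2^{-j}q^{2a})^{0}$ or similar — getting this bookkeeping right, and confirming that neither regime produces a term exceeding the endpoint term, is where the real work lies. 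A secondary technical point is justifying the passage from the smoothed measure $\mu_q$ to the clean ``box count'' heuristics used above; this is routine given the explicit form in Definition~\ref{defMuQalpha} (the bump functions $\psi_0$ are bounded above and below on their supports), but it must be stated carefully so that the $\sim$ relations are legitimate two-sided estimates. Once the dyadic sum is organized, the final inequality $\lesssim 1$ follows by matching exponents, and the constraint on $\tau$ drops out as the natural threshold for convergence of the dominant geometric series.
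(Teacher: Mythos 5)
Your overall strategy---a dyadic decomposition in the distance $\abs{x-y}$, with a box-counting estimate at each scale $2^{-j}$ and a separate treatment of the diagonal---is a legitimate reorganization of what the paper does (the paper instead sums over pairs of cells $R_b, R_{b'}$ and runs a case analysis on which coordinates of $b$ and $b'$ agree). However, there is a genuine gap: the decisive computation is precisely the one you defer. The upper bound $\tau \le \frac{(n-1)a}{n-1-\ga(\al)}$ is \emph{not} forced by the ``fat'' regime $2^{-j}\gtrsim q^{-a}$ (where the count is $\sim (2^{-j})^n q^n$ and the dyadic sum converges for any $\ga>0$), nor really by the diagonal term; it is forced by the anisotropic near-diagonal regime $q^{-2a}\lesssim 2^{-j}\lesssim q^{-a}$, where the only neighboring cells are the vertical stack $b_n\neq b_n'$, $b_i=b_i'$ for $i<n$. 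There the correct cell count is $\sim \max\{1, 2^{-j}q^{2a}\}$ (one factor per coordinate: $\max\{1,2^{-j}q^a\}^{n-1}\cdot\max\{1,2^{-j}q^{2a}\}$), not the $\max\{1,(2^{-j}q^a)^n\}$ or $(2^{-j}q^a)^{n-1}(2^{-j}q^{2a})^0$ you tentatively write down; and, crucially, since the boxes' horizontal side $q^{-\tau}$ exceeds $2^{-j}$ throughout much of this range (note $\tau<2a$ under your hypotheses), only a $(2^{-j})^{n-1}\times q^{-a-\tau}$ slab of each such box lies at distance $\sim 2^{-j}$ from a fixed $x$, so one must track the position of $x$ and $y$ inside their boxes, not just which boxes they lie in. This is exactly the content of the paper's ``numerous rectangles'' and ``high proximity rectangles'' cases (Sections~\ref{m=domterms} and~\ref{b3NEQb'33D}), where each rectangle is further subdivided into $q^{-a-\tau}$-cubes and a dominant-term analysis is run; that analysis is what produces the bound $(q^{\tau})^{t-1}q^{-2a}$ and hence the threshold $\tau\le \frac{2a}{t-1}$. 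Without carrying it out, the claim that ``the constraint on $\tau$ drops out as the natural threshold'' is unverified.

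Two smaller points of the same flavor: your normalization $\abs{E_q}\sim q^{-n\tau}$ is off (it is $q^{n}V_R = q^{n-a-n\tau}=q^{na-n\tau}$), and the diagonal contribution is not merely a matter of local integrability of $\abs{x-y}^{\ga-n}$---because the box is a $q^{-\tau}\times\cdots\times q^{-\tau}\times q^{-a-\tau}$ slab rather than a cube, the shells $S_j$ with $q^{-a-\tau}\le 2^{-j}\le q^{-\tau}$ meet $R_b$ in a set of measure $\sim (2^{-j})^{n-1}q^{-a-\tau}$, and this anisotropic shell count (the paper's $I_B$) must be summed explicitly to get the correct power of $q$. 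Both computations are routine once set up, but as written the proposal neither sets them up correctly nor executes them, and they are where the hypothesis on $\tau$ is actually consumed.
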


Proposition \ref{energy} is proved in Section~\ref{GeoSum}. 
\vs

In summary, 
the proof of \eqref{star} is thus reduced to establishing Proposition \ref{alphaDecay} 
and Proposition \ref{energy}, the proofs of which are given in Section \ref{Curvature&Decay} and Section \ref{GeoSum} respectively.  We end this section with a technical computation showing that Theorem \ref{Heisenberg} follows immediately from \eqref{star}. 

\subsection{Deducing Theorem \ref{Heisenberg} from \eqref{star}}\label{reduction_section}
To obtain the inequality in Theorem \ref{Heisenberg} from that in \eqref{star}, we consider two ranges for $\de \in [0,1)$. 
Set $\tau_0 = \frac{ (n-1)a}{ n-1-\ga(\al)} $ for $\al\geq 2$, where $\ga(\al)$ is defined in Proposition \ref{alphaDecay}. 
\vs

If $\de <  q^{ a - \tau_0 }$, then applying \eqref{star} we deduce 
\begin{align*}
q^{-n}&\#( \{ (n,m) \in \mathbb{Z}^n \ \mathsf{x} \  \mathbb{Z}^n : \Vert n \Vert_\alpha, \Vert m \Vert_\alpha
 \leq Cq^a, \ \abs{ \Vert n-m \Vert_\alpha - q^a} \leq \de \}) \\
& \le 
 q^{-n}\#( \{ (n,m) \in \mathbb{Z}^n \ \mathsf{x} \  \mathbb{Z}^n : \Vert n \Vert_\alpha, \Vert m \Vert_\alpha
 \leq Cq^a, \ \abs{ \Vert n-m \Vert_\alpha - q^a} \leq   q^{ a - \tau_0}  \}) \\
&\lesssim 
 q^{ n - \tau_0 }
 = \max\{q^{n-\tau_0}, q^{n-a}\delta\}.
 \end{align*}
 \vs

If $\de \in \big[q^{a - \tau_0}, 1\big)$, write $\de = q^{a-\tau}$ for some 
$\tau \in \left(a,   \tau_0\right]$.  Now, 
\begin{align*}
q^{-n}&\#( \{ (n,m) \in \mathbb{Z}^n \ \mathsf{x} \  \mathbb{Z}^n : \Vert n \Vert_\alpha, \Vert m \Vert_\alpha
 \leq Cq^a, \ \abs{ \Vert n-m \Vert_\alpha - q^a} \leq \de \}) 
\\
& = 
 q^{-n}\#( \{ (n,m) \in \mathbb{Z}^n \ \mathsf{x} \  \mathbb{Z}^n : \Vert n \Vert_\alpha, \Vert m \Vert_\alpha
 \leq Cq^a, \ \abs{ \Vert n-m \Vert_\alpha - q^a} \leq   q^{ a - \tau}   \}) \\
 &\lesssim 
 q^{ n - \tau}
  = \max\{q^{n-\tau_0}, q^{n-\tau}\}
 = \max\{q^{n- \tau_0}, q^{n-a}\delta\}.
\end{align*}
From here, we observe that
\begin{align}
\#( \{ (n,&m) \in \mathbb{Z}^n \ \mathsf{x} \  \mathbb{Z}^n : \Vert n \Vert_\alpha, \Vert m \Vert_\alpha
 \leq Cq^a, \ \abs{ \Vert n-m \Vert_\alpha - q^a} \leq \de \}) \\
 &\sim q^n\#( \{ (0,m) \in \mathbb{Z}^n \ \mathsf{x} \  \mathbb{Z}^n : \Vert m \Vert_\alpha
 \leq Cq^a, \ \abs{ \Vert 0-m \Vert_\alpha - q^a} \leq \de \}) \\
 &= q^n\#\big( \{ m \in \mathbb{Z}^n :
q^a -\delta \leq \Vert m \Vert_\alpha \leq q^a + \delta \}\big), 
 \end{align}
since the Heisenberg norm is translation invariant.
Upon replacing $q$ with $R^{1/a}$, where $a=\frac{n}{n+1}$, we conclude that 
 \begin{equation}
 \#\big( \{ m \in \mathbb{Z}^{n} :  
R-\delta \leq \Vert m \Vert_\alpha \leq R+ \delta \}\big) 
 \lesssim \max\{ R^{n - \frac{ \gamma(\al) }{n-1 - \gamma(\al)   }},       R^n\delta\},
 \end{equation} 
and
Theorem \ref{Heisenberg} follows immediately upon 
plugging in the values of $\ga(\al)$ obtained in Proposition \ref{alphaDecay}.
\vskip.125in

%%%%%%%%%%%%%%%%%CURVATURE AND DECAY
\section{Curvature and Decay}\label{Curvature&Decay}
In this section we explore the curvature of the 
Heisenberg spheres
 and present the proof of Proposition~\ref{alphaDecay}. 

\subsection{Curvature}\label{curvature}
We begin with some useful definitions summarized from Shakarchi and Stein's \cite[Chapter 8]{SandS4}.  
Given a local defining function $\varphi:\R^{n-1}\rightarrow \R$, 
the \textbf{principal curvatures} are the eigenvalues of the Hessian matrix \big($\nabla^2 \varphi=\{\frac{\partial^2\varphi}{\partial x_i \partial x_k}\}_{1\leq i,k \leq n-1}$\big), and the \textbf{Gaussian curvature} is the product of the principal curvatures.   
The surface of the Heisenberg sphere is given by 
$$\big((z_1^2 + \cdots + z_{2d}^2)^{\alpha/2} + |t|^{\frac{\alpha}{2}} \big)^{1/\alpha} = 1.$$
\vs

%EQUATOR
At the \textit{equator}, that is the intersection of $\partial{}B_1^{\alpha}$ and the hyperplane $t=0$, 
we use rotational symmetry to reduce matters to considering the curvature at the point $(1,0,\dots, 0)$ in $ \mathbb{R}^{2d+1}$.  
In particular, we solve for $z_1$ in terms of  $z_2, \dots, z_{2d}$ and $t$:
\begin{IEEEeqnarray}{cl}\label{solvez}
z_1 = \big[(1-|t|^{\frac{\alpha}{2}})^{2/\alpha} - (z_2^2 + \cdots + z_{2d}^2)\big]^{1/2}.
\end{IEEEeqnarray}
We then take derivatives with respect to $t$ and $z_k$, $k \neq 1$ and evaluate at $(0, \dots, 0) \in \mathbb{R}^{2d}$. 
Note that while the derivative with respect to $t$ does not exists for $\alpha=2$ and the second derivative with respect to $t$ does not exists when $\alpha =3$, we can differentiate twice with respect to all other variables.  
\vs

For $\alpha=4,$ the Hessian is a diagonal matrix with non-zero diagonal terms, and we conclude that the Gaussian curvature is non-zero.
For $\alpha>4$, the Hessian is also a diagonal matrix, but the second derivative in $t$ vanishes, i.e., one principal curvature vanishes at each point of the equator. 
Similarly, when $\al=2,3$, there are $2d-1= n-2$ non-vanishing curvatures.  
This can be seen by differentiating twice with respect to each variable in  $\{z_2, \dots, z_{2d}\}$ and arriving at a quantity that is not zero.  
More details these computations can be found in the thesis of the first listed author \cite[Chapter 4]{Campo}. 
\vs

At the \textit{poles}, the intersection of the set $\partial{}B_1^{\alpha}$ with the $t$-axis, we solve for $t>0$ in terms of $z \in \mathbb{R}^{2d}$:
\begin{IEEEeqnarray}{cl}\label{solvet}
t = \big(1-(z_1^2+ \cdots + z_{2d}^2)^{\alpha/2}\big)^{2/\alpha}.
\end{IEEEeqnarray}
Note it suffices to consider the north pole, $(0, \dots, 0, 1) \in \mathbb{R}^{2d+1}$, as the behavior at the south pole is equivalent by symmetry. 
By calculating the first and second derivatives in each $z_k$, it is evident that all the principal curvatures vanish for all $\alpha \geq 3$.
The exception is $\al=2$, where none of the principal curvatures vanish and the Gaussian curvature is non-vanishing.  
\vs

In summary, 
the poorest behavior for $\al=2$ occurs at each point of the equator where there are $n-2$ non-vanishing curvatures, 
and
the worst behavior for $\al\geq 3$ occurs at the poles where all principle curvatures vanish. 
\vs

At such points of maximal vanishing curvature, 
there are no results of which we are aware that would apply to give an estimate for the decay 
of the Fourier transform of the surface measures.
For this reason, a direct computation of the decay is necessary, the results of which are recorded in Proposition~\ref{alphaDecay}.  
On the other hand, in the instances when there is at least one non-vanishing curvature, 
we may rely on known results to establish the decay of the Fourier transform of the surface measure (see Theorem~\ref{Thm3.1} and Corollary~\ref{Cor3.2}).
\vs

\subsection{Surface Measure and Decay}\label{SurfMeasDecay} 
Before moving on to the proof of Proposition~\ref{alphaDecay}, we introduce the definition of surface measure and its Fourier transform, and we discuss two well-known decay estimates. We denote by $d\sigma$ the {\bf induced Lebesgue measure} on a $C^\infty$-hypersurface $M$, defined as follows: Given any continuous function $f$ on $M$ with compact support, 
\begin{equation}\label{inducedLebesgue}
\int_M f d\sigma = \lim_{\delta \rightarrow 0} \frac{1}{2\delta} \int_{d(x,M)<\delta} F dx,
\end{equation}
where $F$ is the extension of $f$ to a continuous function on a $\delta$-neighborhood of $M$.
\vs

The surface $M$ in $\mathbb{R}^n$ may be described locally up to a transformation as 
$\{(x',x_n) \in B : x_n = \varphi(x') \}$, 
where $B$ is a ball centered at the origin and $\varphi$ is a smooth function (for more details, see Chapter~7, Section~4 of \cite{SandS4}). 
Through this
local description of $M$, we may rewrite \eqref{inducedLebesgue} as
 \begin{equation}\label{rewriteILMeas} 
  \int_{\mathbb{R}^{n-1}} f(x',\varphi(x'))(1+ \abs{\nabla_{x'}\varphi}^2)^{1/2}dx'.
 \end{equation}
From this representation, if a measure $d\nu$ is of the form $d\nu = \psi d\sigma$, for some $C^{\infty}$ function $\psi$ with compact support, then $d\nu$ is a \textbf{surface measure} on $M$ with smooth density.
\vs

\par
We define the Fourier transform of such a surface measure by
\begin{equation}\label{FTdmuHatEq}
 \widehat{\nu}(\xi) = \int_M e^{-2\pi i x \cdot \xi}d\nu(x),
\end{equation}
which is bounded on $\mathbb{R}^n$ provided $d\nu$ is a finite measure.
\vs

Hence, letting $\tilde{\psi}(x')=\psi\big(x', \varphi(x')\big)\big(1+\abs{\nabla_{x'} \varphi}^2\big)^{\frac{1}{2}} \in C^\infty$ with compact support, we may rewrite \eqref{FTdmuHatEq} as
\begin{equation}\label{FTsigmaHatEq}
\widehat{\nu}(\xi) = \int_{\mathbb{R}^{n-1}}e^{-2\pi i(x',\varphi(x'))\cdot \xi}\tilde{\psi}(x')dx'.
\end{equation}
Note that ($spt(\tilde{\psi}) \subseteq spt(\psi)$).
\vs

The following theorem and corollary will be useful for computing the decay for non-polar points.
Theorem~\ref{Thm3.1} and Corollary~\ref{Cor3.2}, originally due to Hlawka \cite{Hlawka} and Littman \cite{Littman}, respectively, are well-known and appear in \cite{SandS4}.

%%%%%%%%%%%%%%%%%%%%THEOREM 3.1 dmuHAT decay
\begin{theorem}[Decay estimate with non-vanishing Gaussian curvature]\label{Thm3.1}
Let $M \subseteq \mathbb{R}^n$ be a hypersurface with non-vanishing Gaussian curvature at all points $z \in spt(\nu)$. Then 
$$\big|\widehat{\nu}(\xi)\big| \lesssim \abs{\xi}^{-\frac{n-1}{2}}.$$
\end{theorem}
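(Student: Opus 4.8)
The plan is to reduce to the standard stationary-phase / van der Corput estimate for oscillatory integrals with non-degenerate phase, after localizing the surface. First I would use a smooth partition of unity subordinate to a finite cover of $spt(\nu)$ by coordinate patches of the form $\{(x',x_n)\in B : x_n = \varphi(x')\}$, writing $\widehat\nu = \sum_j \widehat{\nu_j}$ where each $\nu_j = \psi_j\,d\sigma$ has density supported in one patch; it suffices to bound each $\widehat{\nu_j}(\xi)$. Using \eqref{FTsigmaHatEq}, each term is an oscillatory integral
$$
\widehat{\nu_j}(\xi) = \int_{\R^{n-1}} e^{-2\pi i \Phi_\xi(x')}\,\tilde\psi_j(x')\,dx',
\qquad \Phi_\xi(x') = x'\cdot\xi' + \varphi(x')\,\xi_n,
$$
where $\xi = (\xi',\xi_n)$. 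For $|\xi|\lesssim 1$ the bound is trivial since $\nu_j$ is a finite measure, so assume $|\xi|$ large.

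The key step is the dichotomy based on the direction $\xi/|\xi|$. By rotating so that the patch is a graph over the tangent plane at a reference point, the non-vanishing Gaussian curvature hypothesis means $\det \nabla^2\varphi \neq 0$ on the (compact) support of $\tilde\psi_j$, so after possibly shrinking the patch we have $|\det\nabla^2\varphi|\gtrsim 1$ there. If $\xi$ is roughly transverse to the surface, i.e.\ $|\xi_n|\sim|\xi|$, then $\nabla\Phi_\xi(x') = \xi' + \xi_n\nabla\varphi(x')$ has at most one critical point in the patch, and at such a point the Hessian $\nabla^2\Phi_\xi = \xi_n\nabla^2\varphi$ satisfies $|\det\nabla^2\Phi_\xi|\sim|\xi_n|^{n-1}\sim|\xi|^{n-1}$, so the multidimensional stationary phase lemma (as in \cite[Chapter~8]{SandS4}) yields $|\widehat{\nu_j}(\xi)|\lesssim |\xi|^{-(n-1)/2}$. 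If instead $\xi$ is roughly tangential, $|\xi'|\gtrsim |\xi_n|$ and $|\xi'|\sim|\xi|$, then $|\nabla\Phi_\xi|\gtrsim|\xi|$ on the support with no critical point, and non-stationary phase (repeated integration by parts) gives decay faster than any power of $|\xi|$, which is stronger than needed. Summing the finitely many patches and combining with the trivial bound for bounded $\xi$ gives $|\widehat\nu(\xi)|\lesssim (1+|\xi|)^{-(n-1)/2}\lesssim |\xi|^{-(n-1)/2}$.

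The main obstacle — really the only substantive point — is justifying that the critical point analysis is uniform: one must check that after the finite localization the Hessian determinant of $\varphi$ is bounded below uniformly on each patch, that the implicit function theorem gives a single well-behaved critical point with constants independent of $\xi$, and that the remainder in stationary phase is controlled uniformly in $\xi$. This is exactly the content of the standard stationary-phase lemma with parameters, so in practice I would simply invoke that lemma from \cite{SandS4} rather than re-derive it. (Indeed, since the statement attributes this result to Hlawka and notes it is well known and appears in \cite{SandS4}, the cleanest write-up is to reduce, via the partition of unity and the tangential/transverse dichotomy above, to a direct citation of the oscillatory integral estimate there.)
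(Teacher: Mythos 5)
The paper does not actually prove Theorem~\ref{Thm3.1}: it is quoted as a classical result of Hlawka \cite{Hlawka} and cited from \cite{SandS4}, which is precisely the ``direct citation'' you propose in your closing paragraph. Your sketch (localization to graph patches, the transverse/tangential dichotomy in $\xi$, stationary phase with $|\det\nabla^2\Phi_\xi|\sim|\xi_n|^{n-1}$ in the transverse case and non-stationary phase in the tangential case, with the patch shrunk so that $\sup|\nabla\varphi|$ is small enough to make the dichotomy exhaustive) is the standard proof of that result and is essentially correct.
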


%%%%%%%%%%%%%%%%%%COROLLARY 3.2 # of non-vanishing curvatures/2
\begin{corollary}[Decay estimate when $m$ principal curvatures do not vanish]\label{Cor3.2}
Let $M \subseteq \mathbb{R}^n$ be a hypersurface such that at any point $z \in spt(\nu)$ at least $m$ principal curvatures do not vanish. $($This is equivalent to the statement that the Hessian of $\Phi$ has rank $m$.$)$ Then 
$$\big|\widehat{\nu}(\xi)\big| \lesssim \abs{\xi}^{-\frac{m}{2}}.$$
\end{corollary}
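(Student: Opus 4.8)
The plan is to prove Corollary~\ref{Cor3.2} by the classical method of stationary phase: localize, normalize the geometry, and then split the frequency variable $\xi$ into a ``near-normal'' piece --- where the phase has a nondegenerate critical point in $m$ of the variables --- and a ``transverse'' piece, where the phase is non-stationary. First I would apply a smooth partition of unity subordinate to a finite cover of the compact set $\mathrm{spt}(\nu)$, reducing to the case where $\nu = \psi\, d\sigma$ is supported near a single point $z_0 \in M$. After a rotation of $\R^n$ (which preserves $|\xi|$) we may take $M$ to be the graph $x_n = \varphi(x')$ over a small ball $B \subseteq \R^{n-1}$ with $\varphi(0) = 0$ and $\nabla\varphi(0) = 0$, so that $\sup_B |\nabla\varphi| \le \delta$ with $\delta$ as small as we wish. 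By hypothesis the Hessian of $\varphi$ (the local defining function, denoted $\Phi$ in the statement) at $0$ has rank at least $m$, hence admits a nonsingular $m\times m$ principal submatrix; after reordering coordinates as $x' = (u,v) \in \R^m \times \R^{n-1-m}$ so that this submatrix is $\partial^2_{uu}\varphi$, and shrinking $B$, continuity gives $|\det \partial^2_{uu}\varphi(x')| \ge c > 0$ on $B$. By the local formula \eqref{FTsigmaHatEq}, it then suffices to bound
\begin{equation*}
I(\xi) = \int_{\R^{n-1}} e^{-2\pi i\, \Psi(x',\xi)} \tilde{\psi}(x')\, dx', \qquad \Psi(x',\xi) = x'\cdot \xi' + \varphi(x')\,\xi_n ,
\end{equation*}
where $\tilde\psi \in C_c^\infty(B)$ and we write $\xi = (\xi', \xi_n)$, $\xi' = (\xi_u, \xi_v)$.

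Next I would carry out the case split. In the \emph{transverse} range $|\xi'| > 2\delta |\xi_n|$ we have, on $B$, $|\nabla_{x'}\Psi| = |\xi' + \xi_n \nabla\varphi(x')| \ge |\xi'| - \delta|\xi_n| \ge \tfrac12|\xi'| \gtrsim |\xi|$, so repeatedly integrating by parts against $\nabla_{x'}\Psi / |\nabla_{x'}\Psi|^2$ gives $|I(\xi)| \lesssim_N |\xi|^{-N}$ for all $N$, better than needed. In the \emph{near-normal} range $|\xi'| \le 2\delta|\xi_n|$ we have $|\xi_n| \gtrsim |\xi|$; assuming $\xi_n > 0$ (the case $\xi_n < 0$ follows by passing to the conjugate, equivalently replacing $\tilde\psi$ by $\overline{\tilde\psi}$ and $\xi$ by $-\xi$), set $\lambda = 2\pi \xi_n$ and $\eta = \xi'/\xi_n$, so $|\eta| \le 2\delta$ and $\Psi = \xi_n\, g_v(u)$ up to a term independent of $u$, with $g_v(u) = u\cdot\eta_u + \varphi(u,v)$. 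The critical equation $\partial_u g_v = \eta_u + \partial_u\varphi(u,v) = 0$ has at most one solution $u^\ast(v,\eta_u) \in B$ by the implicit function theorem (using $\partial^2_{uu}\varphi$ nonsingular and $\partial_u\varphi(0,0)=0$, with $\delta$ small); when $u^\ast$ lies outside a slightly shrunken copy of $\mathrm{spt}(\tilde\psi)$ the $u$-integral is non-stationary and contributes $O(\lambda^{-N})$, while when it lies inside it is a nondegenerate critical point with $|\det \partial^2_{uu} g_v| \ge c$, so the $m$-dimensional stationary-phase estimate yields
\begin{equation*}
\left| \int_{\R^m} e^{-i\lambda g_v(u)} \tilde\psi(u,v)\, du \right| \lesssim \lambda^{-m/2} \lesssim |\xi|^{-m/2},
\end{equation*}
with implied constant depending only on $c$ and finitely many $C^k$-norms of $\varphi$ and $\tilde\psi$. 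Integrating over $v$ in the compact set $\{v : (u,v)\in B \text{ for some }u\}$ and summing over the finitely many pieces of the partition of unity gives $|\widehat\nu(\xi)| \lesssim |\xi|^{-m/2}$.

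The main obstacle is the \emph{parameter-uniform} multidimensional stationary-phase estimate: one must ensure the bound $O(\lambda^{-m/2})$ for the inner $u$-integral holds with a constant independent of the parameter $v$ and of the direction $\eta = \xi'/\xi_n$, and that the secondary cutoff separating ``critical point inside $\mathrm{spt}(\tilde\psi)$'' from ``critical point outside'' can be installed smoothly and uniformly. Both are standard --- the stationary-phase asymptotics underlying Theorem~\ref{Thm3.1} in \cite{SandS4} produce constants controlled solely by a lower bound on $|\det \partial^2_{uu} g_v|$ and upper bounds on a bounded number of derivatives of phase and amplitude, all uniform over the compact $\mathrm{spt}(\nu)$ and the finitely many charts --- but this bookkeeping is where the real content lies; the partition of unity, the rotation normalizing the tangent plane, and the non-stationary integrations by parts are routine. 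Equivalently, the near-normal case can be phrased as slicing $M$ into the $m$-dimensional graphs $\{x_n = \varphi(\cdot, v)\} \subseteq \R^{m+1}$, which have nonvanishing Gaussian curvature, and applying Theorem~\ref{Thm3.1} in dimension $m+1$ fiberwise in $v$; this repackaging requires exactly the same uniformity.
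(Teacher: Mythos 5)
Your argument is correct, and there is nothing in the paper to compare it against: Corollary~\ref{Cor3.2} is quoted without proof as a classical result of Littman and cited to \cite{SandS4}, and your proof is precisely the standard one found there. The localization, the use of a nonsingular $m\times m$ principal block of the (symmetric, rank $\ge m$) Hessian, the split into a transverse range killed by non-stationary phase and a near-normal range handled by $m$-dimensional stationary phase in the $u$-variables uniformly in $(v,\xi'/\xi_n)$, and your identification of the parameter-uniform stationary-phase constant as the only point requiring real care are all exactly right.
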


Referring to the curvature computations of Section~\ref{curvature}, 
we apply Theorem~\ref{Thm3.1} to estimate the decay at points with non-vanishing Gaussian curvature, and we apply Corollary~\ref{Cor3.2} with $m=n-2$ to estimate the decay at points with $n-2$ non-vanishing principal curvatures.  
Once we establish decay estimates in both regions, we take the minimal decay to obtain a universal decay estimate for the surface.

%Recall that such curvature computations were given in Section~\ref{curvature}.  
At the poles, we recall that all the principal curvatures vanish for $\alpha \geq 3$, so neither Theorem~\ref{Thm3.1} nor Corollary~\ref{Cor3.2} are applicable.
 Instead, we must estimate the decay for these points using our Proposition~\ref{alphaDecay}. 
We start with the proof for $n=3$ and $\alpha=4$.
\vs

\subsection{Proof of Proposition~\ref{alphaDecay}}
Following the discussion above, 
when $\al=2$, Corollary~\ref{Cor3.2}  is used to establish decay at points of the equator and Theorem~\ref{Thm3.1} is used to establish decay at the poles as well as at all remaining points.
\vs

%%%%%%%%%%%PROOF IN 3 DIMENSIONS WITH alpha=4
It remains to prove Proposition~\ref{alphaDecay} at the poles for $\alpha \geq 3$. 
We focus on the north pole, $(0,0,1)$, as the estimate at the south pole follows through symmetry.
In short, we use a Taylor expansion to reformulate the arising phase function. 
Next, a polar change of variables is used to exploit the radiality of the function $\varphi$, at which point it is a matter of applying foundational oscillatory integral techniques.
 \vs 

For simplicity of presentation, we begin with the proof when $\al=4$ and then give the more general proof when $\al=3$ or $\al\geq 4$.  Note there is no inherent difference in the two proofs other than an extra level of generality.  

\subsection{Proof when $\al=4$}
\begin{proof}[Proof of Proposition~\ref{alphaDecay} when $\alpha=4$.]
 Given $n=3$ and $\alpha=4$, \eqref{solvet} becomes 
$t=\big(1-(z_1^2 + z_2^2)^2\big)^{\frac{1}{2}}$.
Set 
$$\varphi(z_1,z_2) = \big(1-(z_1^2 + z_2^2)^2\big)^{\frac{1}{2}},$$
and observe that $\varphi(0,0)=1$. Following the localized description of a hypersurface $M$ discussed above, we may describe the Heisenberg sphere near the north pole as $M= \{(z_1,z_2, t) \in \tilde{B}: t = \varphi(z_1,z_2)\}$. 
Let $\sigma_4$ denote the surface measure on $M$. 
Now, by  \eqref{FTsigmaHatEq},
\begin{equation}\label{sigma4Hat}
\widehat{\sigma}_4(\xi) 
= \int_{\mathbb{R}^2} 
e^{-2\pi i      (z_1, z_2, \varphi(z_1,z_2))    \cdot \xi} \tilde{\psi}(z_1,z_2) dz_1 dz_2, 
\end{equation}
where $\tilde{\psi}(z_1,z_2)=\psi\big(z', \varphi(z')\big)\big(1+\abs{\nabla_{z'} \varphi}^2\big)^{\frac{1}{2}} \in C_c^\infty$ for a smooth cut-off function $\psi$ supported on the unit ball. 
\vs

We proceed with the derivation of a 
Taylor approximation for $\varphi$ by considering $g(x)=\sqrt{x}$ and expanding around 1 using the first degree Taylor polynomial:
$$g(x)=1 + \frac{1}{2}(x-1) - \frac{ 1 }{8c^{\frac{3}{2}}} (x-1)^2 \ \text{for some} \ c \in (x,1).$$ 
\vs

%%%%%Taylor expansion
Thus, we have 
\begin{align}
\varphi(z_1,z_2) = g(1-(z_1^2+z_2^2)^2) &= 1 + \frac{1}{2}\big(1-(z_1^2+z_2^2)^2-1\big) - \frac{1}{8c^{\frac{3}{2}}}\big(1-(z_1^2+z_2^2)^2-1\big)^2 \nonumber\\
&= 1- \frac{1}{2}(z_1^2+z_2^2)^2 - \frac{1}{8c^{\frac{3}{2}}}(z_1^2+z_2^2)^4  \label{tEst3D}.
\end{align}
\vs

%%%%OSC INT
Letting $\xi =(\xi_1,\xi_2,\xi_3) =(\tilde{\xi},\xi_3) \in \mathbb{R}^2 \textsf{ x } \mathbb{R}$
and plugging in the expression for $\varphi$ from \eqref{tEst3D}, 
\eqref{sigma4Hat} becomes
\begin{equation}
\widehat{\sigma}_4(\xi) \ = \int_{\mathbb{R}^2} e^{-2\pi i\left[(z_1,z_2) \cdot \tilde{\xi} + \left(1- \frac{1}{2}(z_1^2+z_2^2)^2 - \frac{1}{8c^{3/2}}(z_1^2+z_2^2)^4\right)\xi_3\right]} \tilde{\psi}(z_1,z_2) dz_1 dz_2.  \label{dmuQalpha43D}
\end{equation}
\vs

We use rotational symmetry near the poles to simplify to the case $\xi_2=0$, so that $|\tilde{\xi}|=|\xi_1|$ 
and $$\widehat{\sigma}_4(\xi) = \widehat{\sigma}_4( \xi_1, 0,\xi_3)   .$$
\vs

We now split our argument between two regions: 
The first is the critical region where $\xi$ may be normal to the pole, so $|\xi_3| \geq |\xi_1|$, and the second is its complementary region, $|\xi_3| < |\xi_1|$.
\vs

%%%%%%%%%%%%%%%%REGION 1: \xi_3 BIGGER
\noindent
\textbf{The case when }$\mathbf{|\xi_3| \geq |\xi_1| \!:}$
We first consider the region where $|\xi_3| \geq  |\xi_1|$.
Now, (\ref{dmuQalpha43D})  
with $\xi_2=0$
becomes
\begin{equation}
\int_{\mathbb{R}^2} e^{-2\pi i \xi_3\left[\frac{\xi_1}{\xi_3}z_1 + 1- \frac{1}{2}(z_1^2+z_2^2)^2 - \frac{1}{8c^{3/2}}(z_1^2+z_2^2)^4\right]} \tilde{\psi}(z_1,z_2) dz_1 dz_2. \label{dmuQalpha43Dsimp}
\end{equation}

%%%%%%POLAR COORD
Translating to polar coordinates and setting $\varphi(r,\theta)= r\frac{\xi_1}{\xi_3}\cos\theta 
+1-\frac{1}{2}r^4-\frac{1}{8c^{3/2}}r^8$, we see that
bounding
(\ref{dmuQalpha43Dsimp}) is  equivalent to bounding
\begin{equation}
 \!\int_0^{\frac{\pi}{2}} \! \int_0^1 \! e^{-2\pi i \xi_3\varphi(r,\theta)} r dr d\theta, 
\label{PolarDmuQAlph4}
\end{equation}
where we have used the compact support afforded by $\psi$ to restrict $r$ to the compact set $[0,1]$, and we reduced the integration in $\theta$ to $[0,\pi]$ using the periodicity of $\cos \theta$. 
We only address the integral over $\theta \in [0, \frac{\pi}{2}]$, as the integral over $\theta \in [\frac{\pi}{2},\pi]$ is handled using symmetry.
\vs

Fix $\theta \in (0, \frac{\pi}{2})$ and set
\begin{equation*}
\varphi_\theta(r)=\varphi(r, \theta)= r\frac{\xi_1}{\xi_3}\cos\theta +1-\frac{1}{2}r^4-\frac{1}{8c^{3/2}} r^8,
\end{equation*}
consequently, all derivatives will be taken with respect to $r$.
\vs

Observe that the first and second derivatives of $\varphi_\theta$ with respect to $r$ are
\begin{equation}\label{Alph4Phi'Phi''}
\varphi'_\theta(r)=\frac{\xi_1}{\xi_3}\cos\theta - 2r^3-  \frac{1}{c^{3/2}}r^7
\ \ \text{ and } \ \ 
\varphi''_\theta(r)=-6r^2- \frac{7}{c^{3/2}}r^6.
\end{equation}
Note that since $\varphi''_\theta(r) \le 0$, it follows that $\varphi'_\theta$ is monotone decreasing. 
Since $\theta$ is fixed in $(0, \frac{\pi}{2})$,
$\varphi'_\theta(r)$ has at most one zero in $(0,1)$, which we denote by $r_0$, 
satisfying 
\begin{equation}\label{Alph4r0_eq}
2r_0^3 + \frac{1}{c^{3/2}} r_0^7 = \frac{\xi_1}{\xi_3}\cos\theta.
\end{equation} 

We further note that if $r_0 \in (0,1)$, then $\varphi'_\theta$ is positive in $(0,r_0)$ and negative in $(r_0,1)$; else, $\varphi'_\theta$ does not change sign in $(0,1)$. Additionally, 
 $\frac{r}{\varphi'_\theta(r)}$ is monotone increasing.
\vs

We introduce an $\epsilon$-ball, 
$B_\epsilon(r_0)$, about the point $r_0$, giving two cases to consider: 
\begin{itemize}
\item[Case (A):] $|\xi_3|^{-\frac{1}{4}} > r_0$, where we 
let $\epsilon =|\xi_3|^{-\frac{1}{4}}$, and
\item[Case (B):] $|\xi_3|^{-\frac{1}{4}} \leq r_0$ where we 
have $\epsilon = \frac{1}{r_0} \cdot |\xi_3|^{-\frac{1}{2}}$.
\end{itemize}
\vs

Before we bound the expression in (\ref{PolarDmuQAlph4}) in each of these cases, we give some preliminaries important to both.
Away from $B_{\epsilon}(r_0)$, we utilize the following observation:
\begin{equation}
e^{-2\pi i\xi_3\varphi_\theta(r)}=\frac{i}{2\pi \xi_3\varphi'_\theta(r)} \frac{d}{dr}\big(e^{-2\pi i\xi_3\varphi_\theta(r)}\big). \label{eRelVarPhiTheta3D}
\end{equation}
It follows that, for any $a<b$ such that $|\varphi'_\theta(r)|\geq c'>0$ on $[a,b]$, 
\begin{align}
\abs{\int_{a}^b \! e^{-2\pi i \xi_3\varphi_\theta(r)} r dr} &= \frac{1}{2\pi |\xi_3|}\abs{\int_{a}^b \frac{d}{dr}\big(e^{-2\pi i \xi_3\varphi_\theta(r)}\big) \frac{r}{\varphi'_\theta(r)} dr} \nonumber\\
&= \frac{1}{2\pi |\xi_3|}\abs{\frac{re^{-2\pi i \xi_3\varphi_\theta(r)}}{\varphi'_\theta(r)}\Big|_a^b- \int_{a}^b \! e^{-2\pi i \xi_3\varphi_\theta(r)} \frac{d}{dr}\!\!\left(\!\frac{r}{\varphi'_\theta(r)}\!\right)\! dr} \ \ \ \ \ \ \ \ \nonumber\\
&\leq \frac{1}{2\pi |\xi_3|}\!\left[
\abs{
\frac{r }
{\varphi'_\theta(r)}    
   \Big|_a^b} 
+ 
\abs{\int_{a}^b \! e^{-2\pi i \xi_3\varphi_\theta(r)} \frac{d}{dr}\!\!\left(\!\frac{r}{\varphi'_\theta(r)}\!\right) \!dr}
\right] \!. 
\label{IntByPartsGen3D}
\end{align}
Furthermore, since 
$\varphi'_\theta$ is monotone and $\frac{d}{dr}\Big(\frac{r}{\varphi'_\theta(r)}\Big)$ is non-zero on $(a,b)$, the second expression in (\ref{IntByPartsGen3D}) is:
\begin{equation*}
\abs{
\int_{a}^b e^{-2\pi i \xi_3\varphi_\theta(r)} \frac{d}{dr}\!\left(\!\frac{r}{\varphi'_\theta(r)}\!\right) dr}
 \leq
  \int_a^b  \left|\frac{d}{dr}\!\left(\!\frac{r}{\varphi'_\theta(r)}\!\right)\right| dr 
  = \abs{\int_a^b \frac{d}{dr}\!\left(\!\frac{r}{\varphi'_\theta(r)}\!\right) dr},
\end{equation*}
which is the same as the first expression.
\vs

Combining the above, we conclude that 
\begin{align}
\abs{\int_{a}^b \! e^{-2\pi i \xi_3\varphi_\theta(r)} r dr} 
&\leq \frac{1}{\pi |\xi_3|}\!\left(
\abs{
\frac{r    
}
{\varphi'_\theta(r)}    
   \Big|_a^b} 
\right) \!. 
\label{IntByPartsGen3D_COMBINE}
\end{align}

This implies that a lower bound on $|\varphi'_\theta(r)|$ is essential. Observe that
\begin{align}
|\varphi'_\theta(r_0 \pm \epsilon)| \, &= |\varphi'_\theta(r_0 \pm \epsilon) - \varphi'_\theta(r_0)| \nonumber\\
&= \abs{\frac{\xi_1}{\xi_3}\cos\theta - 2(r_0 \pm \epsilon)^3 - c^{-\frac{3}{2}}(r_0 \pm \epsilon)^7 - \frac{\xi_1}{\xi_3}\cos\theta + 2r_0^3 + c^{-\frac{3}{2}}r_0^7} \nonumber\\
&= \abs{ - 2(r_0 \pm \epsilon)^3 - c^{-\frac{3}{2}}(r_0 \pm \epsilon)^7 + 2r_0^3 + c^{-\frac{3}{2}}r_0^7} \nonumber\\
& \gtrsim 
\begin{cases}
\epsilon^3, & \text{if } r_0<\epsilon\\
r_0^2\epsilon, & \text{if } r_0 >\epsilon. 
\label{phi'ThetaLowerAlph4}
\end{cases}
\end{align}
\vs

We now proceed to bound the inner integral of (\ref{PolarDmuQAlph4}), beginning with Case (A).
\begin{itemize}
%%%%%%%%%%%%CASE A, EPS BIGGER
\item[Case (A):] $|\xi_3|^{-\frac{1}{4}} > r_0$. Let $\epsilon = |\xi_3|^{-1/4}$, then 
\begin{equation*}
\abs{\int_{0}^1 e^{-2\pi i \xi_3\varphi_\theta(r)} r dr } \leq \ \abs{\int_{0}^\epsilon e^{-2\pi i \xi_3\varphi_\theta(r)} r dr} + \abs{\int_{\epsilon}^1 e^{-2\pi i \xi_3\varphi_\theta(r)} r dr},  \label{FirstInt1A}
\end{equation*}
and the first integral is simply bounded by $\epsilon^2=|\xi_3|^{-\frac{1}{2}}$.
\vs

For the second, 
we use (\ref{IntByPartsGen3D_COMBINE}) with $a=\epsilon$ and $b=1$ to bound 
\begin{equation*}
\abs{\int_{\epsilon}^1 e^{-2\pi i \xi_3\varphi_\theta(r)} r dr}
 \leq  \frac{1}{\pi |\xi_3|}\left(\abs{\frac{r   }{\varphi'_\theta(r)}\Big|_\epsilon^1} 
\right).
\end{equation*}
\vs

Furthermore,
(\ref{phi'ThetaLowerAlph4}) implies that
\begin{equation*}
\abs{\frac{r }{\varphi'_\theta(r)}\Big|_\epsilon^1} \lesssim \frac{1}{\epsilon^2}.
\end{equation*}
\vs

Hence, for Case (A), we have:
\begin{equation}
\abs{\int_{0}^1 e^{-2\pi i \xi_3\varphi_\theta(r)} r dr } \lesssim |\xi_3|^{-\frac{1}{2}} + \frac{1}{\epsilon^2|\xi_3|} \sim |\xi_3|^{-\frac{1}{2}}. \label{CaseABoundAlph4}
\end{equation}

%%%%%%%%%%%%%CASE B r_0 BIGGER
\item[Case (B):] $|\xi_3|^{-\frac{1}{4}} \leq r_0$. Let $\epsilon =  \frac{1}{r_0}|\xi_3|^{-\frac{1}{2}}$, then $\abs{\int_{0}^1 e^{-2\pi i \xi_3\varphi_\theta(r)} r dr }$ is bounded by
\begin{equation*}
 \abs{\int_{0}^{r_0 -\epsilon} e^{-2\pi i \xi_3\varphi_\theta(r)} r dr} + \abs{\int_{r_0-\epsilon}^{r_0 +\epsilon}  e^{-2\pi i \xi_3\varphi_\theta(r)} r dr} + \abs{\int_{r_0 + \epsilon}^1e^{-2\pi i \xi_3\varphi_\theta(r)} r dr}, \label{FirstInt1B}
\end{equation*}
and the second integral is simply bounded by $r_0\epsilon=|\xi_3|^{-1/2}$, since $r_0 \geq \epsilon$.\footnote{In the boundary case, where $r_0=\epsilon$, the first integral is simply zero and the second becomes an integral to $2\epsilon$, which produces no substantive change.} 
\vs

Applying (\ref{IntByPartsGen3D_COMBINE}) with $a=0$ and $b=r_0-\epsilon$, it follows that 
\begin{equation*}
\abs{\int_0^{r_0-\epsilon} \! e^{-2\pi i \xi_3\varphi_\theta(r)} r dr} \leq  
\frac{1}{\pi |\xi_3|} 
\!\left(
\abs{\frac{r}{\varphi'_\theta(r)}\Big|_0^{r_0-\epsilon}} 
\right) \!. \
\end{equation*}
Since $|\varphi'_\theta(r_0-\epsilon)| \gtrsim r_0^2\epsilon$ by (\ref{phi'ThetaLowerAlph4}), 
plugging in $\epsilon =  \frac{1}{r_0}|\xi_3|^{-\frac{1}{2}}$, it follows that this is further bounded above by 
\begin{equation*}
\frac{1}{|\xi_3|r_0\epsilon} = \frac{r_0|\xi_3|^{1/2}}{|\xi_3|r_0} = |\xi_3|^{-1/2}.
\end{equation*} 
\vs

The bound on the third integral is the same and its proof proceeds similarly. 
\end{itemize}
\vs

Therefore, in both Cases (A) and (B), 
\begin{equation}
\abs{\int_{0}^1 e^{-2\pi i \xi_3 \varphi_\theta(r)} r dr} \lesssim |\xi_3|^{-1/2}. \label{CaseBBoundAlph4}
\end{equation}

Integrating (\ref{CaseABoundAlph4}) and (\ref{CaseBBoundAlph4}) in $\theta$, it follows that 
\begin{align*}
\abs{\widehat{\sigma}_4(\xi)} \, &\sim \abs{\int_0^{\frac{\pi}{2}} \int_0^1 e^{-2\pi i \xi_3\varphi_\theta(r)} r dr d\theta } \\
&\lesssim \int_0^{\frac{\pi}{2}} \frac{1}{|\xi_3|^{1/2}} d\theta \\
&= \frac{\pi}{2|\xi_3|^{1/2}}.
\end{align*}
Recalling that $\abs{\xi_3} \geq |\xi_1|$, we conclude that
\begin{equation}\label{3DAlpha4decay1}
\abs{\widehat{\sigma}_4(\xi)} \lesssim \abs{\xi}^{-\frac{1}{2}}.
\end{equation}
\vs
%
%
%%%%%%%%%%%%%REGION 2: \xi_1 BIGGER 
\noindent
\textbf{The case when }$\mathbf{|\xi_3|<|\xi_1| \!:}$
Now, in the region where $|\xi_1|>|\xi_3|$, it follows from (\ref{dmuQalpha43D}) that  
\begin{align}
\widehat{\sigma}_4(\xi)=  \int_{\mathbb{R}^2} e^{-2\pi i \xi_1\big[z_1 + \frac{\xi_3}{\xi_1}\big(1- \frac{1}{2}(z_1^2+z_2^2)^2 - \frac{1}{8c^{3/2}}(z_1^2+z_2^2)^4\big)\big]} \tilde{\psi}(z_1,z_2) dz_1 dz_2. \label{dmuQalpha43Dsimp2}
\end{align}
\vs

%%%%%%%%%%%%SWITCH TO POLAR AND USE FUBINI INSTEAD

%%%%%%POLAR COORD
Translating this into polar coordinates and using the compact support afforded by $\psi$ to restrict $r$ to the compact set $[0,1]$, we set
\begin{equation*}
\varphi(r,\theta)= r\cos\theta + \frac{\xi_3}{\xi_1}\big(1-\frac{1}{2}r^4-\frac{1}{8c^{3/2}}r^8\big). 
\end{equation*}
In this way, bounding (\ref{dmuQalpha43Dsimp2}) is equivalent to bounding
\begin{align}
%\int_{0}^{\pi/2} \int_0^1 e^{-2\pi i \xi_1\varphi(r,\theta)} r dr d\theta
\int_0^1 \int_0^{\pi/2}  e^{-2\pi i \xi_1\varphi(r,\theta)} r d\theta dr
\label{xi1Polar3D}
\end{align}
\vs

Fixing $r$ in $(0,1]$, we denote
\begin{equation*}
\varphi_r(\theta) = \varphi(r,\theta)= r\cos\theta + \frac{\xi_3}{\xi_1}\big(1-\frac{1}{2}r^4-\frac{1}{8c^{3/2}}r^8\big).
\end{equation*}

Now observe that the derivative of $\varphi_r$ with respect to $\theta$ is
\begin{equation*}
\varphi_r'(\theta) = -r\sin \theta, 
\end{equation*}
which has a single zero at $\theta=0$. 

\vs

Let $B_\epsilon$ denote the interval $(0,\epsilon)$. Then the inner integral in \eqref{xi1Polar3D} becomes
\begin{align*}
\bigg| \int_0^{\pi/2} \! e^{-2\pi i \xi_1\varphi_r(\theta)}  d\theta \bigg| \nonumber
&=
\bigg|
 \int_{B_\epsilon} e^{-2\pi i \xi_1\varphi_r(\theta)}  r d\theta  +
\int_\epsilon^{{\pi/2}} e^{-2\pi i \xi_1\varphi_r(\theta)}  r d\theta \bigg| \\
 &\leq 
  \int_{B_\epsilon} r d\theta  \,\,\,+\,\,\, 
\bigg| \int_\epsilon^{\pi/2} e^{-2\pi i \xi_1\varphi_r(\theta)}  r d\theta \bigg|.
 \end{align*}
 
 The first integral is simply a constant times $\epsilon$.
For the second,
utilize the van der Corput Lemma, which appears in most texts on the topic of stationary phase, such as \cite[Chapter 8]{Stein} and \cite[Chapter 14]{Mattila15}.  

\begin{lemma}[van der Corput]\label{vanDerCorput}
Given $\phi : [a,b] \rightarrow \mathbb{R}$ smooth 
such that for all $x \in (a,b)$, $|\phi^{(k)}(x)| \geq s>0$, then 
$$\bigg|\int_a^b e^{i \xi \phi(x)} dx\bigg| \leq c_k|s\xi|^{-\frac{1}{k}},$$
if $\phi'$ is monotone or $k \geq 2$.
\end{lemma}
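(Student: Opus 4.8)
The plan is to prove Lemma~\ref{vanDerCorput} by induction on $k$, which is the classical route (see \cite[Chapter 8]{Stein} or \cite[Chapter 14]{Mattila15}); I will track the constant $c_k$ only loosely. First I would handle the base case $k=1$, where $\phi'$ is monotone and $|\phi'|\ge s$ on $(a,b)$. Write $e^{i\xi\phi(x)}=\frac{1}{i\xi\phi'(x)}\frac{d}{dx}\big(e^{i\xi\phi(x)}\big)$ and integrate by parts. The boundary term is at most $\frac{2}{|\xi|s}$ since $|\phi'|\ge s$, and the remaining term is $\frac{1}{|\xi|}\int_a^b\big|(1/\phi')'\big|\,dx$; because $\phi'$ is monotone, $(1/\phi')'=-\phi''/(\phi')^2$ has constant sign, so this integral equals $|1/\phi'(b)-1/\phi'(a)|\le 2/s$. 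Hence $\big|\int_a^b e^{i\xi\phi}\,dx\big|\le 4/(|\xi|s)=c_1|s\xi|^{-1}$.

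For the inductive step, assume the estimate for $k-1\ge 1$ and suppose $|\phi^{(k)}|\ge s$ on $(a,b)$. Replacing $\phi$ by $-\phi$, which leaves the modulus of the integral unchanged, I may assume $\phi^{(k)}\ge s>0$, so that $\phi^{(k-1)}$ is monotone increasing. Let $c\in[a,b]$ be a point where $|\phi^{(k-1)}|$ is minimized, and for a parameter $\delta>0$ to be chosen, excise $I_\delta:=(c-\delta,c+\delta)\cap[a,b]$. By the fundamental theorem of calculus and $\phi^{(k)}\ge s$, one has $|\phi^{(k-1)}(x)|\ge s\delta$ for every $x\in[a,b]\setminus I_\delta$, and this set is a union of at most two intervals. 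On each such interval I would apply the inductive hypothesis with $s\delta$ in place of $s$ — noting that when $k-1=1$ the required monotonicity of $\phi'$ is automatic, since $|\phi''|\ge s\delta$ there — for a total contribution $\le 2c_{k-1}(s\delta|\xi|)^{-1/(k-1)}$, while $I_\delta$ contributes at most $2\delta$ by the trivial bound. This yields $\big|\int_a^b e^{i\xi\phi}\,dx\big|\le 2\delta+2c_{k-1}(s\delta|\xi|)^{-1/(k-1)}$, and the choice $\delta=(s|\xi|)^{-1/k}$ balances the two terms (indeed it makes $(s\delta|\xi|)^{-1/(k-1)}=(s|\xi|)^{-1/k}$), giving the claim with $c_k=2+2c_{k-1}$.

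I expect the only genuinely delicate point to be the base case: the monotonicity of $\phi'$ is precisely what converts the total variation of $1/\phi'$ into a telescoping difference of endpoint values, and the $k=1$ estimate is false without it. For $k\ge 2$ this difficulty disappears, because the fixed sign of $\phi^{(k)}$ simultaneously gives the monotonicity of $\phi^{(k-1)}$ used to locate the minimizing point $c$ and supplies the monotonicity hypothesis needed once the recursion reaches $k=1$; the remaining optimization over $\delta$ is a one-line computation.
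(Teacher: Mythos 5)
Your argument is correct, and it is the standard textbook proof of van der Corput's lemma. Note that the paper does not prove this lemma at all: it quotes it as a known result and points to \cite[Chapter 8]{Stein} and \cite[Chapter 14]{Mattila15}, which is precisely where the induction-on-$k$ argument you reproduce (base case via integration by parts using the monotonicity of $\phi'$, inductive step via excising a $\delta$-neighborhood of the near-stationary point of $\phi^{(k-1)}$ and optimizing $\delta=(s|\xi|)^{-1/k}$) can be found. Your handling of the one delicate point — that when the recursion reaches $k-1=1$ the required monotonicity of $\phi'$ is supplied by the fixed sign of $\phi''=\phi^{(k)}$ — is exactly right, so the proposal fills in, correctly, the proof the paper chose to omit.
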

This lemma is generally written with  $|\phi^{(k)}(x)| \geq 1$, but we will use this more general formulation.
\vs

Now, observe that  $\varphi_r'(\theta) = -r\sin{\theta} $ is monotone and $|\varphi_r'(\theta)| \geq \frac{1}{2}r\epsilon>0$ on $\left[\epsilon, \frac{\pi}{2}\right]$ since $r>0$. Thus, applying van der Corput
(Lemma~\ref{vanDerCorput}) to the second integral, 
\begin{align*}
 \abs{ \int_\epsilon^{\frac{\pi}{2}} e^{-2\pi i \xi_1 \varphi_r(\theta)}   r d\theta } 
&= r  \abs{ \int_\epsilon^{\frac{\pi}{2}} e^{-2\pi i \xi_1 \varphi_r(\theta)}   d\theta }  \nonumber\\
 &\lesssim  r |r\epsilon \xi_1 |^{-1} 
 \nonumber\\
 &= \frac{1}{|\epsilon\xi_1|}.
 \end{align*} 
 \vs

Hence, when $|\xi_1|>|\xi_3|$, we have
\begin{equation*}
\bigg| \!\int_0^\pi \! e^{-2\pi i \xi_1\varphi_r(\theta)} r d\theta \bigg| 
\lesssim  \epsilon + \frac{1}{|\xi_1|} \cdot \frac{1}{\epsilon} 
=\frac{1}{|\xi_1|^{\frac{1}{2}}},
\end{equation*}
choosing $\epsilon =  |\xi_1|^{-\frac{1}{2}}$.
Integrating in $r$ in $[0,1]$ does not change this estimate and completes the bound for \eqref{xi1Polar3D}. 
This concludes the proof when $\al=4$. 
\end{proof}
\vs

\subsection{Proof when $\al=3$ or $\al\geq 4$}
We now proceed with the proof for a general $\alpha=3$ or $\alpha>4$ in 3-dimensions.
\vs
%%%%%%%%%%%%%%%% GENERAL 3D CASE WITH ALPHA \GEQ 4
\begin{proof}[Proof of Proposition~\ref{alphaDecay} when $n=3$ and $\alpha = 3$ or $\alpha >4$]
We again focus on the north pole, $(0,0,1)$, and \eqref{solvet} becomes
$t=\big(1-(z_1^2 + z_2^2)^{\alpha/2}\big)^{2/\alpha}$. 
\vs

Set $\varphi(z_1,z_2) = \big(1-(z_1^2 + z_2^2)^{\alpha/2}\big)^{2/\alpha}$ and observe that we again have $\varphi(0,0)=1$. Proceeding as in the case where $\alpha=4$, 
we use \eqref{FTsigmaHatEq} to formulate the Fourier transform of the surface measure as
\begin{equation}\label{sigmaHat3D}
\widehat{\sigma}_\alpha(\xi) 
= \int_{\mathbb{R}^2} 
e^{-2\pi i    (z_1, z_2, \varphi(z_1,z_2))    \cdot \xi} \tilde{\psi}(z_1,z_2) dz_1 dz_2, 
\end{equation}
where $\tilde{\psi}(z_1,z_2)=\psi\big(z', \varphi(z')\big)\big(1+\abs{\nabla_{z'} \varphi}^2\big)^{\frac{1}{2}} \in C_c^\infty$ and $\psi(z_1,z_2)$ is again a smooth bump function supported on the unit ball.
\vs

Now, we estimate $\varphi$ by its Taylor expansion about 1.
Let $g(x)=x^{2/\alpha}$. Then the first order Taylor expansion of $g(x)$ about 1 is 
$$g(x) = 1 + \frac{2}{\alpha}(x-1) - \frac{c_\alpha(\alpha-2)}{\alpha^2}(x-1)^2$$
where $c_\alpha = c^{\frac{2-\alpha}{\alpha^2}}$ for some $\ c \in (x,1).$
Recognizing that $\varphi(z_1, z_{2}) = g(1-(z_1^2 + z_{2}^2)^{\frac{\alpha}{2}})$, we have 
%%%%TAYLOR EXPANSION
\begin{align}
\varphi(z_1, z_{2}) \
&= 1+ \frac{2}{\alpha}(1-(z_1^2 + z_{2}^2)^{\frac{\alpha}{2}}-1) -\frac{c_\alpha(\alpha-2)}{\alpha^2}(1-(z_1^2 + z_{2}^2)^{\frac{\alpha}{2}}-1)^2 \nonumber\\
&= 1- \frac{2}{\alpha}(z_1^2 + z_{2}^2)^{\frac{\alpha}{2}} - \frac{c_\alpha(\alpha-2)}{\alpha^2}(z_1^2 + z_{2}^2)^{\alpha}.\label{tEst3Dgen}
\end{align}
\vs

Proceeding as in the $\al=4$ case, we rewrite the phase function in \eqref{sigmaHat3D} using \eqref{tEst3Dgen}, so that letting
 $\xi =(\tilde{\xi},\xi_3) \in \mathbb{R}^2 \ \textsf{x} \ \mathbb{R}$ gives
\begin{equation*}
\widehat{\sigma}_\alpha(\xi) = \int_{\mathbb{R}^2} e^{-2\pi i \left[(z_1,z_2) \cdot \tilde{\xi} + \left(1- \frac{2}{\alpha}(z_1^2 + z_{2}^2)^{\frac{\alpha}{2}} - \frac{c_\alpha(\alpha-2)}{\alpha^2}(z_1^2 + z_{2}^2)^{\alpha}\right)\xi_3\right]} \tilde{\psi}(z_1,z_2) dz_1 dz_2. \label{dmuQalpha3D}
\end{equation*}
\vs

As before, we simplify to the case $\xi_2=0$ via rotational symmetry, so that $|\tilde{\xi}| = |\xi_1|$ and 
$$\widehat{\sigma}_\alpha(\xi) = \widehat{\sigma}_\alpha( \xi_1, 0,\xi_3) .$$ 
\vs

%%%%%%%%%%%%%%%%REGION 1: \xi_3 BIGGER
\noindent
\textbf{The case when }$\mathbf{|\xi_3| \geq |\xi_1| \!:}$
Considering first the region where $|\xi_3| \geq |\xi_1|$, we have: 
\begin{align}
\widehat{\sigma}_\alpha(\xi) &= \int_{\mathbb{R}^2} e^{-2\pi i\xi_3 \big[\frac{\xi_1}{\xi_3}z_1 + 1- \frac{2}{\alpha}(z_1^2 + z_{2}^2)^{\frac{\alpha}{2}} - \frac{c_\alpha(\alpha-2)}{\alpha^2}(z_1^2 + z_{2}^2)^{\alpha}\big]} \tilde{\psi}(z_1,z_2) dz_1 dz_2. \label{dmuQalpha3D3Dom}
\end{align}
\vs

%%%%%%POLAR COORD
Translating into polar coordinates, set  
\begin{equation}\label{phi_eq} 
\varphi(r,\theta)=r\frac{\xi_1}{\xi_3}\cos\theta+1-\frac{2}{\alpha}r^{\alpha}-\frac{c_\alpha(\alpha-2)}{\alpha^2}r^{2\alpha},
\end{equation}
and restrict $r$ to the compact set $[0,1]$ using the compact support afforded by $\psi$. Then we may again reduce the integration in $\theta$ to $[0,\pi]$ using the periodicity of $\cos \theta$, so that
to bound (\ref{dmuQalpha3D3Dom}) it suffices to bound 
\begin{equation}\label{dmuQalpha3D3DomPol}
\int_0^{\frac{\pi}{2}} \! \int_0^1 e^{-2\pi i \xi_3\varphi_\theta(r)} r dr d\theta,
\end{equation}
since the integral over $\theta \in [\frac{\pi}{2},\pi]$ is handled via symmetry. 
\vs

Fixing $\theta$ in $\left(0,\frac{\pi}{2}\right)$, we denote the phase function by
\begin{equation*}
\varphi_\theta(r) = \varphi(r,\theta)= r\frac{\xi_1}{\xi_3}\cos\theta+1-\frac{2}{\alpha}r^{\alpha}-\frac{c_\alpha(\alpha-2)}{\alpha^2}r^{2\alpha},
\end{equation*}
consequently, all derivatives will be taken with respect to $r$.

%%%%%%%%%%%PHI DERIVATIVES
Now observe that the first and second derivatives of $\varphi_\theta$ with respect to $r$ are
\begin{equation}\label{phi_deriv}
\varphi'_\theta(r)=\frac{\xi_1}{\xi_3}\cos\theta -2r^{\alpha-1}-\frac{2c_\alpha(\alpha-2)}{\alpha}r^{2\alpha-1}
\end{equation}
 and 
 \begin{equation}\label{phi_second_deriv}
 \varphi''_\theta(r)=-2(\alpha-1)r^{\alpha-2}-\frac{2c_\alpha(\alpha-2)(2\alpha-1)}{\alpha}r^{2\alpha-2}.
 \end{equation}
We further note that 
since $\varphi_\theta''(r) < 0$ for all $r\in (0,1)$, it follows that $\frac{r}{\varphi'_\theta(r)}$ is monotone increasing. 
Since $\theta$ is fixed in $\left(0,\frac{\pi}{2}\right)$, 
$\varphi'_\theta$ has at most one zero at the point $r_0 \in (0,1)$ 
where 
\begin{equation*}
2r_0^{\alpha-1} + \frac{2c_\alpha(\alpha-2)}{\alpha}r_0^{2\alpha-1} = \frac{\xi_1}{\xi_3}\cos\theta.
\end{equation*} 
Observe that $\varphi'_\theta$ is therefore positive on $(0,r_0)$ and negative on $(r_0,1)$. 
\vs 

We introduce an $\epsilon$-ball, 
$B_\epsilon(r_0)$, about the point $r_0$, giving two cases to consider: 
\begin{itemize}
\item[Case (A):] $|\xi_3|^{-\frac{1}{\alpha}} > r_0$, where we set $\epsilon$-ball ($\epsilon =|\xi_3|^{-\frac{1}{\alpha}}$), and
\item[Case (B):] $|\xi_3|^{-\frac{1}{\alpha}} \leq r_0$ where we set $\epsilon = r_0^{1-\frac{\alpha}{2}} \cdot |\xi_3|^{-\frac{1}{2}}$. 
\end{itemize}
\vs

%%%%%%%%%INTEGRATION BY PARTS/TECHNIQUE SET-UP
We follow the same approach as in the case of $\alpha=4$ to bound \eqref{dmuQalpha3D3DomPol}, first
observing that since $r \leq 1$,
\begin{align}
|&\varphi'_\theta (r_0 \pm \epsilon)| \nonumber\\
&= |\varphi'_\theta(r_0 \pm \epsilon) - \varphi'_\theta(r_0)| \nonumber\\
&= \abs{\frac{\xi_1}{\xi_3}\!\cos\theta \!-\! 2(r_0 \pm \epsilon)^{\alpha-1} \!\!- \! \frac{2c_\alpha(\alpha-2)}{\alpha}(r_0 \pm \epsilon)^{2\alpha-1}\!\! - \! \frac{\xi_1}{\xi_3}\!\cos\theta \!+\! 2r_0^{\alpha-1} \!\!+ \! \frac{2c_\alpha(\alpha-2)}{\alpha}r_0^{2\alpha-1}} \nonumber\\
&= \abs{-2(\alpha-1)r_0^{\alpha-2}(\pm \epsilon) - \sum_{k=2}^{\alpha-1} {\alpha-1 \choose k} r_0^{\alpha-1-k}(\pm \epsilon)^k - \sum_{k=1}^{2\alpha-1} {2\alpha-1 \choose k} r_0^{2\alpha-1-k}(\pm \epsilon)^k} \nonumber\\
& \gtrsim 
\begin{cases}
\epsilon^{\alpha-1}, & \text{if } r_0<\epsilon \\
r_0^{\alpha-2}\epsilon, & \text{if } r_0 >\epsilon. \label{phi'ThetaLower3D}
\end{cases}
\end{align}
\vs

We begin with the bound of the inner integral of \eqref{dmuQalpha3D3DomPol} in the case that $r_0 \leq |\xi_3|^{-\frac{1}{\alpha}}$:

\begin{itemize}
%%%%%%%%%%%%CASE A, EPS BIGGER
\item[Case (A):] $|\xi_3|^{-\frac{1}{\alpha}} > r_0$. Let $\epsilon = |\xi_3|^{-1/\alpha}$, then we have 
\begin{equation*}
\abs{\int_{0}^1 e^{-2\pi i \xi_3\varphi_\theta(r)} r dr } \leq \ \abs{\int_{0}^\epsilon e^{-2\pi i \xi_3\varphi_\theta(r)} r dr} + \abs{\int_{\epsilon}^1 e^{-2\pi i \xi_3\varphi_\theta(r)} r dr},  \label{FirstInt1A3D}
\end{equation*}
where the first integral is simply bounded by $\eps^2 = |\xi_3|^{-\frac{2}{\alpha}}$.
\vs

For the second, we use \eqref{IntByPartsGen3D_COMBINE} as before with $a=\epsilon$ and $b=1$, since $\frac{d}{dr}\!\left(\frac{r}{\varphi'_\theta(r)}\right)$ does not change sign on $(\epsilon,1)$. This gives
\begin{equation*}
\abs{\int_{\epsilon}^1 e^{-2\pi i \xi_3\varphi_\theta(r)} r dr} 
\leq   \frac{1}{\pi |\xi_3|}\Bigg[\bigg|\frac{r}{\varphi'_\theta(r)}\Big|_\epsilon^1\bigg| \Bigg].
\end{equation*}
Furthermore, since $\frac{r}{\varphi'_\theta(r)}<0$ and increasing on $(\eps,1)$, 
(\ref{phi'ThetaLower3D}) implies that
\begin{equation*}
\abs{\frac{r}{\varphi'_\theta(r)}\Big|_\epsilon^1} \lesssim \frac{1}{\epsilon^{\alpha-2}}.
\end{equation*}
\vs

Hence, for Case (A), plugging in $\epsilon = |\xi_3|^{-1/\alpha}$, we have:
\begin{equation}
\abs{\int_{0}^1 e^{-2\pi i \xi_3\varphi_\theta(r)} r dr } 
\lesssim \frac{1}{|\xi_3|} \cdot \epsilon^{-(\alpha-2)}
=\frac{1}{|\xi_3|} \cdot |\xi_3|^{\frac{\alpha-2}{\alpha}} 
= |\xi_3|^{-\frac{2}{\alpha}}. \label{CaseABound3D}
\end{equation}

%%%%%%%%%%%%%CASE B r_0 BIGGER
\item[Case (B):] $|\xi_3|^{-\frac{1}{\alpha}} \leq r_0$.
Let $\epsilon =  r_0^{1-\frac{\alpha}{2}}|\xi_3|^{-\frac{1}{2}}$, then we have $\abs{\int_{0}^1 e^{-2\pi i \xi_3\varphi_\theta(r)} r dr }$ bounded by
\begin{equation}\label{IntSumB3D}
\abs{\int_{0}^{r_0-\epsilon} e^{-2\pi i \xi_3\varphi_\theta(r)} r dr } + \abs{\int_{r_0-\epsilon}^{r_0 +\epsilon} e^{-2\pi i \xi_3\varphi_\theta(r)} r dr} + \abs{\int_{r_0 + \epsilon}^1 e^{-2\pi i \xi_3\varphi_\theta(r)} r dr},  
\end{equation}
where the second integral is simply bounded by $r_0\epsilon=|\xi_3|^{-\frac{1}{2}}r_0^{2-\frac{\alpha}{2}}$, since $r_0 \geq \epsilon$.\footnote{In the boundary case, where $r_0=|\xi_3|^{-\frac{1}{\alpha}}$, so $r_0=\epsilon$, the first integral is simply zero and the second becomes an integral to $2\epsilon$, which produces no substantive change.} This is further bounded by $|\xi_3|^{-\frac{2}{\alpha}}$ when $\alpha >4$, since $\frac{1}{r_0}\leq|\xi_3|^{\frac{1}{\alpha}}$,
and by $|\xi_3|^{-\frac 1 2} $ when $\alpha=3$, since $r_0 \leq 1$.
\vs

For the first integral, recalling from our preliminaries
that $\frac{d}{dr}\big(\frac{r}{\varphi'_\theta(r)}\big)$ does not change sign on $(0, r_0-\epsilon)$, we know from \eqref{IntByPartsGen3D_COMBINE} that, for $a = 0$ and $b=r_0-\epsilon$, 
\begin{equation*}
\abs{\int_0^{r_0-\epsilon} \! e^{-2\pi i \xi_3\varphi_\theta(r)} r dr}
\leq  \frac{1}{\pi |\xi_3|} \!\Bigg[\bigg|\frac{r}{\varphi'_\theta(r)}\Big|_0^{r_0-\epsilon}\bigg| \Bigg].
\end{equation*}
\vs

Note that we again have $\frac{d}{dr}\big(\frac{r}{\varphi'_\theta(r)}\big)$ increasing and negative, with $|\varphi'_\theta(r_0-\epsilon)| \gtrsim r_0^{\alpha-2}\epsilon$ by (\ref{phi'ThetaLower3D}). Plugging in $\epsilon =  r_0^{1-\frac{\alpha}{2}}|\xi_3|^{-\frac{1}{2}}$, it follows that this is further bounded above by 
a constant multiple of 
\begin{equation}\label{FirstInt1B3D}
\frac{1}{|\xi_3|r_0^{\alpha-3}\epsilon} = 
|\xi_3|^{-\frac{1}{2}}r_0^{-\frac{\alpha-4}{2}} \leq |\xi_3|^{-\frac{2}{\alpha}},
\end{equation} 
when $\alpha>4$, since 
$\frac{1}{r_0} \leq |\xi_3|^{\frac{1}{\alpha}}$, and by $|\xi_3|^{-\frac 1 2} $ when $\al=3$.
\vs

The process to bound the third integral is similar and yields the same bound. 
\end{itemize}
\vs

Therefore, combining Case (A) and Case (B), we have
\begin{equation}
\abs{\int_{0}^1 e^{-2\pi i \xi_3 \varphi_\theta(r)} r dr} \lesssim |\xi_3|^{-\frac{2}{\alpha}}, \text{ for } \alpha>4, \label{CaseABBound3D}
\end{equation}
and
\begin{equation}
\abs{\int_{0}^1 e^{-2\pi i \xi_3 \varphi_\theta(r)} r dr} \lesssim |\xi_3|^{-\frac 1 2 }, \text{ for } \alpha=3, \label{CaseABBound3Dalpha3}
\end{equation}
where we have used the fact that 
$\frac{1}{2} < \frac{2}{3}=\frac{2}{\alpha}$ when $\alpha=3$.
\vs

%%%%%%%
Integrating 
(\ref{CaseABBound3D}) and 
(\ref{CaseABBound3Dalpha3})
in $\theta$, it follows that
$$
\abs{\widehat{\sigma}_\alpha(\xi)} \, = 2\abs{\int_0^\pi \int_0^1 e^{-2\pi i \xi_3\varphi_\theta(r)} r dr d\theta } 
\lesssim 
\begin{cases}
|\xi_3|^{-2/\al} \text{ if } \al >4\\
|\xi_3|^{-1/2} \text{ if } \al =3.\\
\end{cases}
$$
\vs

As we are in the case when $\abs{\xi_3} \geq |\xi_1|$, we conclude that 
$$
\abs{\widehat{\sigma}_\alpha(\xi)} 
\lesssim 
\begin{cases}
|\xi|^{-2/\al} \text{ if } \al >4\\
|\xi|^{-1/2} \text{ if } \al =3.\\
\end{cases}
$$
\vs

%
%
%%%%%%%%%%%%%REGION 2: \xi_1 BIGGER
\noindent
\textbf{The case when }$\mathbf{|\xi_3|<|\xi_1| \!:}$
In
 the region where $|\xi_1|>|\xi_3|$, we have 
\begin{align}
\widehat{\sigma}_\alpha(\xi) =  \int_{\mathbb{R}^2} e^{-2\pi i \xi_1\left[z_1 + \frac{\xi_3}{\xi_1}\left(1- \frac{2}{\alpha}(z_1^2+z_2^2)^{\frac{\alpha}{2}} - \frac{c_\alpha(\alpha-2)}{\alpha^2}(z_1^2+z_2^2)^\alpha\right)\right]} \tilde{\psi}(z_1,z_2) dz_1 dz_2. \label{dmuQalpha3Dsimp2}
\end{align}
\vs

%%%%%%%%%%%%SWITCH TO POLAR AND USE FUBINI INSTEAD

%%%%%%POLAR COORD
We translate this into polar coordinates, with phase function 
$\varphi(r,\theta)= r\cos\theta + \frac{\xi_3}{\xi_1}\big(1-\frac{2}{\alpha}r^\alpha-\frac{c_\alpha(\alpha-2)}{\alpha^2}r^{2\alpha}\big)$, 
and apply Fubini's Theorem so that (\ref{dmuQalpha3Dsimp2}) is 
\begin{align*}
\int_{S^1} \int_0^1 e^{-2\pi i \xi_1\varphi(r,\theta)} r dr d\theta 
= \int_0^1 \int_{S^1}  e^{-2\pi i \xi_1\varphi(r,\theta)} r d\theta dr. \label{xi1PolarAlph3D}
\end{align*}
Note that, as before, we have used the compact support afforded by $\psi$ to restrict $r$ to the compact set $[0,1]$.
\vs

Fix $r \in (0,1]$ and observe that the derivative with respect to $\theta$ of $\varphi$
is $-r\sin \theta$, as in the $\alpha=4$ case.
Following the argument immediately under \eqref{xi1Polar3D}, 
we conclude that when $|\xi_1|>|\xi_3|$, we have
\begin{equation*}
\left| \widehat{\sigma}_\alpha(\xi)\right| \lesssim |\xi|^{-\frac{1}{2}},
\end{equation*}
when $\al=3$ or $\al>4$.  
\vs

Therefore, 
combining the cases when 
$\mathbf{|\xi_3|>|\xi_1|}$ and $\mathbf{|\xi_3|<|\xi_1|}$, we conclude
\begin{equation*}
\abs{\widehat{\sigma}_\alpha(\xi)} \lesssim 
\begin{cases}
|\xi|^{-\frac{2}{\alpha}}, & \text{if } \alpha > 4 \\
|\xi|^{-\frac{1}{2}}, & \text{if } \alpha = 3.
\end{cases}
\end{equation*}
\end{proof}

\vskip.125in 
%%%%%%%%%%%%%%%%%%%%%%%%%%%%PROOFS
\section{Energy Estimate}\label{GeoSum}
In this section, we give the proof of Proposition \ref{energy}. 
In particular, we establish that
given $\al\geq 2$, $\tau \in \left(a, \frac{(n-1)a}{n-1-\gamma(\al)}\right]$ and $t =n - \gamma(\alpha)$, then 
\begin{equation}\label{strong_energy}
\iint \abs{x-y}^{-t} d\mu_q(x)d\mu_q(y) \lesssim 1,
\end{equation}
where $\ga(\al) $ is defined as in Proposition \ref{alphaDecay}.
\vs

We present only the proof of Proposition \ref{energy} when $n=3$, and refer to the thesis of the first listed author, \cite{Campo}, for the details of the proof when $n>3$.   The reasoning for this is that the proof for $n>3$ is rather lengthy and simply recycles the ideas presented here in the $n=3$ case.
\vs 
For computational purposes, it will be convenient to note that when $n=3$, 
\begin{equation}\label{tbound}
2.5\le t<3
\end{equation}
 for all $\alpha \geq2$. 
Furthermore, with $\tau$ and $t$ as above, 
when $n=3$, 
\begin{equation}\label{taubound3D}
\tau \leq \frac{(n-1)a}{n-1-\gamma(\al)} 
= \frac{2a}{t-1}.
\end{equation}
%

%%%%%%%%%%%%%%%%

\subsection{Preliminaries and strategy for the proof of the energy estimate}\label{energyStrategy}

We use the definition of $\mu_q$ from Definition~\ref{defMuQalpha} to
 expand out the energy integral on the left-hand-side of \eqref{strong_energy}.

First, we briefly recall the set-up from Section \ref{reductSec} in the case $n=3$.  
$R_\tau$ denotes the $q^{-\tau} \ \mathsf{x} \ q^{-\tau} \ \mathsf{x} \ q^{-a-\tau}$ rectangular box centered at the origin. 
The set $E_q$, defined in \eqref{EqDef}, is a subset of the unit square made up of $q^3$ shifted copies of the rectangle $R_\tau$.  
For $b\in \mathbb{R}^3$, let $R_b = R_\tau+b$, so that $b$ is the center of this shifted rectangle. 
$V_R$ denotes the volume of such a rectangle so that $V_R = q^{-a-3\tau}$.
$N_R=q^3$ denotes the number rectangles making up $E_q$. 
Furthermore, $\abs{E_q} = N_R \cdot V_R$, where $|E_q|$ denotes the volume of $E_q$. 
\vs

Recall, $L_{3,q}$ denotes the truncated lattice: 
$$L_{3,q} = \{ b=(b_1, b_2,b_3) \in \mathbb{Z}^3: 
0< b_1\le q^a, \,\, 0< b_2\le q^a, \,\, 0< b_3\le q^{2a}\},$$
and observe, since $a=3/4$, that  $|L_{3,q}|  = q^3= N_R$. 
With this notation, we have  $$E_q = \bigcup_{b \in L_{3,q}} \left\{ R_\tau + b \right\}.$$
The left-hand-side of \eqref{strong_energy} is now equivalent to 
\begin{IEEEeqnarray}{rl}
\iint \abs{x-y}^{-t} d\mu_q(x) d\mu_q(y) 
\sim& \left(\frac{1}{\abs{E_q}}\right)^{\!2} \sum_{b,b' \in  L_{a,q}}  
\int_{R_b} \int_{R_{b'}}|x-y|^{-t} dy dx \nonumber   \\
 =& \left(\frac{1}{\abs{E_q}}\right) \sum_{b \in  L_{a,q}}  
\int_{R_b}    J(b,b') dx    
\label{OscIntkt}
\end{IEEEeqnarray}

where $b=(b_1,b_2,b_3)$, $b'=(b'_1,b'_2,b'_3)$, and
\begin{equation}\label{Jb}
J(b,b') = \left(\frac{1}{\abs{E_q}}\right)  \sum_{b' \in  L_{a,q}}    \int_{R_{b'}}|x-y|^{-t} dy.
\end{equation}
Matters are now reduced to showing that for each $b,b' \in L_{a,q}$,
\begin{equation}\label{reductionkt}
J(b,b')\lesssim 1,
\end{equation}
when $\tau \in \left(a, \frac{2a}{t-1}\right]$. Indeed, upon 
establishing \eqref{reductionkt}, the expression in \eqref{OscIntkt} is bounded  by
$$
  \left(\frac{1}{\abs{E_q}}\right) \sum_{b \in  L_{a,q}}  
\int_{R_b}    J(b,b') dx    
\lesssim 
  \left(\frac{1}{\abs{E_q}}\right) \sum_{b \in  L_{a,q}}  
V_R 
=   \left(\frac{1}{\abs{E_q}}\right) N_R \cdot V_R = 1,
$$
and Proposition~\ref{energy} will be proved.  
\vs

%%%%%%%%%%%%%%%%%%%%
In order to establish \eqref{reductionkt} for an arbitrary choice of $b=(b_1,b_2,b_3) ,b'= (b_1',b_2',b_3') \in L_{a,q}$, we conduct a case analysis on $b,b'$: 
\begin{description}
\item[(1)] $b=b'$ ($x$ and $y$ are in the same rectangle) or
\item[(2)] $b \neq b'$ ($x$ and $y$ are in different rectangles).
\end{description}
The second case, when $b\neq b'$, is further divided into subcases depending on the proximity of the rectangles. 
We present the various sub-cases in order of increasing level of complexity.
\vs

 In Section~\ref{b=b'3D}, we address the first case where $b=b'$ using a dyadic shell argument that considers the intersection of the shells with the rectangle. 
In Sections ~\ref{b3=b'3ALLneq3D} through ~\ref{b3NEQb'33D}, we address the second case when $b \neq b'$. 
Specifically, 
in Section~\ref{b3=b'3ALLneq3D}, we address the cases where
either $b_1\neq b_1'$ or $b_2 \neq b_2'$.  In these cases, we say that the rectangles $R_b$ and $R_{b'}$ are ``sufficiently separated''. 
Our arguments depend on \eqref{absRelxsim3D} below and the fact that the arithmetic mean dominates the geometric mean.  
\vs

In Sections~\ref{m=domterms} and ~\ref{b3NEQb'33D}, we consider the cases when $b_3 \neq b'_3$ but one or both of the other $b_i$ are equal. Here, we say that the two distinct rectangles are within ``close proximity,'' and the arithmetic-geometric mean inequality does not suffice.  In 
Section~\ref{m=domterms}, a brute-force case analysis is conducted based on dominant terms when one $b_i = b'_i$.  
In Section ~\ref{b3NEQb'33D}, a more delicate analysis based on further decomposition of rectangles is required when both $b_i=b'_i$.
\vs

Note that when $b\neq b'$, there are a number of cases that are resolved through symmetry. For example, all instances of $b_1 = b'_1$ and $b_2 \neq b'_2$ are handled identically to those where $b_1 \neq b'_1$ and $b_2 = b'_2$ by relabeling the coordinates.
\vs

%%%%%%%%%%%%%%%%%CASE 1 IN 3D
\subsection{Case $b'=b$ in 3 Dimensions: Dyadic Shells}\label{b=b'3D}
Inspecting the definition of $J(b,b')$ in \eqref{Jb}, we see that when $b'=b$, we have the single term
\begin{equation}\label{bb}
J(b,b') =J(b,b) =  \left(\frac{1}{\abs{E_q}}\right)     \int_{R_b}|x-y|^{-t} dy.
\end{equation} 
We partition the rectangle $R_b$ into its intersection with dyadic shells of the form $S_j=\{y \in \mathbb{R}^3 : 2^{-(j+1)} \leq \abs{x-y} \leq 2^{-j} \}$ for $j \in \mathbb{N}$. Note, we need only consider $j \geq \log_2q^{\tau}$, as otherwise the shell and the rectangle have empty intersection, and so
\begin{equation}\label{integral1} \int_{R_b}|x-y|^{-t} dy = \!\!\sum_{j\geq \log_2q^{\tau}} \int_{S_j\cap R_b} |x-y|^{-t} dy.
\end{equation}
As $j$ ranges, depending on whether $S_j$ is completely contained in the rectangle $R_b$ or not, we have two regions over the sum in $j$ to consider: (A) $0 \leq r = 2^{-j} \leq q^{-a-\tau}$ or (B) $q^{-a-\tau} \leq r = 2^{-j} \leq q^{-\tau}$. (See Figure~\ref{Sj&Rx})

\begin{figure}[ht]
  \centering
  \includegraphics[width=\linewidth]{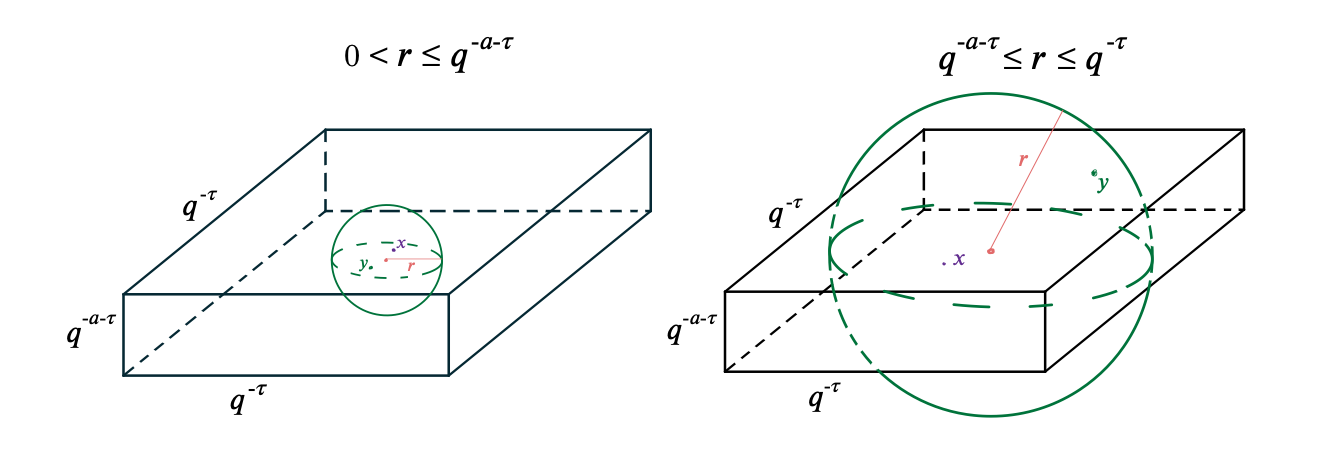}
\caption{Example of $S_j$ contained in $R_b$ and exceeding $R_b$}\label{Sj&Rx}
\end{figure}

In the first case, $j > \log_2 q^{a+\tau}$ and the shell $S_j \subseteq R_b$. In the latter case, $\log_2 q^\tau \leq j \leq \log_2q^{a+\tau}$, and $S_j \cap R_b \neq \emptyset$, but the shell is no longer completely contained in $R_b$. Accordingly, we further divide the integral in \eqref{integral1} as follows:
$$
\left( \sum_{j=\log_2q^{a+\tau}}^\infty  \int_{S_j\cap R_b}  \! \! \abs{x-y}^{-t}  dy \right) 
+ 
\left( \sum_{j=\log_2q^{\tau}}^{\log_2q^{a+\tau}}  \int_{S_j\cap R_b}  \!  \! \abs{x-y}^{-t} \!dy \right)  = I_A(b) + I_B(b).
$$
We will consider the sums in $I_A(b)$ and $I_B(b)$ separately.
\vs

%%%%%%%%%%%%%%%%%CASE 1A 3D
\noindent
\textit{ Bounding $I_A$:}
When $j > \log_2 q^{a+\tau}$, the shell is completely contained in the rectangle so that $S_j\cap R_b = S_j$. 
Since $|x-y|^{-t} \sim 2^{jt}$ on $S_j$ and $|S_j| \sim 2^{-3j}$, we have
\begin{IEEEeqnarray}{cl}
I_A(b)  \,\, &\sim \! \sum_{j=\log_2q^{a+\tau}}^\infty 2^{jt} |S_j| \nonumber\\
&\sim \! \sum_{j=\log_2q^{a+\tau}}^\infty 2^{jt} 2^{-3j}. \label{sumInts3D}
\end{IEEEeqnarray}
Since $t<3$, \eqref{sumInts3D} is equivalent to
$$
\sum_{j=\log_2q^{a+\tau}}^\infty 2^{-j(3-t)}
\sim  (q^{a+\tau})^{-(3-t)}.
$$
%
%
%%%%%%%%%%%%%%%%%CASE 1B 3D
\noindent
\textit{ Bounding $I_B$:}
When $\log_2 q^\tau \leq j \leq \log_2q^{a+\tau}$, the shell containing $S_j$ exceeds $R_b$ in the vertical direction. For such $j$, the intersection $S_j \cap R_b$ is contained in a cylinder of height $q^{-a-\tau}$ and radius $2^{-j}$, so that $|S_j \cap R_b| \lesssim 2^{-2j}q^{-a-\tau}$.
Thus, since $|x-y|^{-t} \sim 2^{jt}$ on $S_j$, 
\begin{IEEEeqnarray}{cl}
I_B(b) \,\, 
&\sim \sum_{j=\log_2q^{\tau}}^{\log_2q^{a+\tau}} 2^{jt}|S_j \cap R_x| \nonumber\\
&\lesssim \sum_{j=\log_2q^{\tau}}^{\log_2q^{a+\tau}} 2^{jt}2^{-2j}q^{-a-\tau}. \label{sumInts3D2}
\end{IEEEeqnarray}

Recalling that $t > 2$, \eqref{sumInts3D2} equals
\begin{IEEEeqnarray}{cl}
\sum_{j =\log_2q^{\tau}}^{\log_2q^{a+\tau}} 2^{j(t-2)}q^{-(a+\tau)}
\sim (q^{a+\tau})^{t-2} q^{-(a+\tau)}. \label{HNB1B3}
\end{IEEEeqnarray}
%

%\subsubsection{Concluding the argument}
\noindent
\textit{ Bounding $I_A+I_B$:}
It now follows that 
\begin{IEEEeqnarray}{cl}
I_A(b) + I_B(b) \lesssim (q^{a+\tau})^{t-3}. \label{IA+IBbound3D}
\end{IEEEeqnarray}
\vs

Plugging \eqref{IA+IBbound3D} into $J(b,b)$ in \eqref{bb} and recalling that $|E_q| = N_R \cdot V_R$, $V_R = q^{-a-3\tau}$, $N_R =q^3$, and $a=\frac{3}{4}$, we see that $J(b,b)$ is bounded by a constant multiple of $1$ provided that $3t+4\tau t \leq 18$, which is guaranteed since $2.5\le t<3$ from \eqref{tbound} and $\tau \leq \frac{2a}{t-1}$ from \eqref{taubound3D}.
Thus, we have verified \eqref{reductionkt} when $b=b'$. 
\vs

%%%%%%%%%%%%%%CASE 2A 3D
\subsection{Sufficiently Separated Rectangles in 3-dimensions} \label{b3=b'3ALLneq3D}
In this section we address the cases where (1) $b'_3 = b_3$ and $b'_i \neq b_i$ for exactly one $i \in\{1, 2\}$ (see Section~\ref{m=13D}), (2) $b'_3 = b_3$ and $b'_i \neq b_i$ for both $i\in \{1,2\}$ (see Section~\ref{m=23D}), and (3) the case where $b'_i \neq b_i$ for all $i \in \{1,2,3\}$ (see Section~\ref{m=33D}).
\vs

Throughout the remainder of this proof, for $x\in R_b$ and $y\in R_{b'}$, we use the observation that 
\begin{IEEEeqnarray}{cl}
\abs{x-y} \, &\sim \abs{x_1-y_1} + \abs{x_2 - y_2} + \abs{x_3-y_3} \label{absRel3D} \\
& \sim \abs{\frac{b_1}{q^a} - \frac{b'_1}{q^a}} + \abs{\frac{b_2}{q^a} - \frac{b'_2}{q^a}} + \abs{\frac{b_3}{q^{2a}} - \frac{b'_3}{q^{2a}}}. \label{absRelxsim3D}
\end{IEEEeqnarray}
\vs

\subsubsection{Assume $b'_3=b_3$, but $b'_i \neq b_i$ for $i=1$ or 2} \label{m=13D}
Without loss of generality, assume $b'_1 \neq b_1$, but $b'_2=b_2$. Here,
$$J(b,b') =  \left(\frac{1}{\abs{E_q}}\right)    \sum_{\substack{b'_1=1 \\ b'_1 \neq b_1}}^{q^a}  \int_{R_{b'}}|x-y|^{-t} dy.$$
\par
We estimate $\abs{x-y}^{-t}$ by $\left(\frac{\abs{b_1-b'_1}}{q^a}\right)^{\!-t}$ so that 
\begin{align*}
J(b,b') &\sim  \frac{q^{at}}{|E_q|}  \sum_{\substack{b'_1=1 \\ b'_1 \neq b_1}}^{q^a}   \int_{R_{b'}} \abs{b_1-b'_1}^{-t}  dy.
\end{align*}

Observe that the integral in $y$ simply yields a factor of $V_R$.  Applying the integral test and recalling that $2.5\le t<3$, this expression is bounded by a constant multiple of 
\begin{equation*}\label{jbb'Bound}
 \frac{q^{at}V_R}{|E_q|} =  \frac{q^{at}}{N_R} = q^{at-3},
\end{equation*}
which is bounded by 1 since $a=\frac{3}{4}<1$ and $t<3$. 
\vs

\subsubsection{Assume $b'_1 \neq b_1$,  $b'_2 \neq b_2$, and $b'_3 = b_3$}\label{m=23D}

Here,
$$J(b,b') =  \left(\frac{1}{\abs{E_q}}\right) 
 \sum_{\substack{b'_1=1 \\ b'_1 \neq b_1}}^{q^a}  \sum_{\substack{b'_2=1 \\ b'_2 \neq b_2}}^{q^a}
  \int_{R_{b'}}|x-y|^{-t} dy.$$

Similar to the previous case, we estimate $\abs{x-y}$ using \eqref{absRelxsim3D}. However, unlike the previous case, we now must sum in $b'_2$ and a tighter estimate will be needed to assure the sums converge. 
Since, by assumption, $b'_3 =b_3$, we estimate 
\begin{equation*}
\abs{x-y} \geq \abs{x_1-y_1} + \abs{x_2 - y_2} \sim \abs{\frac{b_1}{q^a} - \frac{b'_1}{q^a}} + \abs{\frac{b_2}{q^a} - \frac{b'_2}{q^a}}.
\end{equation*}
\vs

Thus, using the fact that the arithmetic mean dominates the geometric mean, we bound $\abs{x-y}^{-t}$ by
\begin{IEEEeqnarray*}{cl}
2^{-t}\abs{x_1-y_1}^{-\frac{t}{2}}\abs{x_2 - y_2}^{-\frac{t}{2}} \sim \left(\frac{\abs{b_1-b'_1}}{q^a}\right)^{\!-\frac{t}{2}}\left(\frac{\abs{b_2-b'_2}}{q^a}\right)^{\!-\frac{t}{2}}.
\end{IEEEeqnarray*}
Hence, the expression $J(b,b')$ is bounded by
\begin{align*}
J(b,b') &\lesssim  \left(\frac{q^{at}\,V_R}{\abs{E_q}}\right) 
 \sum_{\substack{b'_1=1 \\ b'_1 \neq b_1}}^{q^a}  \sum_{\substack{b'_2=1 \\ b'_2 \neq b_2}}^{q^a}
\abs{b_1-b'_1}^{-\frac{t}{2}}  \abs{b_2-b'_2}^{-\frac{t}{2}}\\
&=   \left(\frac{q^{at}}{N_R }\right) 
 \Bigg[ \sum_{\substack{b'_1=1 \\ b'_1 \neq b_1}}^{q^a}  
 \abs{b_1-b'_1}^{-\frac{t}{2}} \! \Bigg]^2.
\end{align*}

Since $t > 2$, an application of the integral test yields the same bound as above,
$$\left(\frac{q^{at}}{N_R }\right) = q^{at-3},$$
which is bounded by 1 since $2 < t <3$ and $a = \frac{3}{4}$.

\begin{note}
Note that the seemingly similar case when $b'_3 \neq b_3$ and $b'_i \neq b_i$ for precisely one $i \in \{1,2\}$ \textit{cannot} be handled using an argument similar to the one above in Section \ref{m=23D}.  An alternative argument using dominant terms will be given to handle this case in Section \ref{m=domterms}.  
\end{note}

%%%%%%%%%%%%%%CASE B 3D

\subsubsection{Assume $b'_3 \neq b_3$ and both of the other $b'_i \neq b_i$}\label{m=33D} 
Here,
$$J(b,b') =  \left(\frac{1}{\abs{E_q}}\right) 
 \sum_{\substack{b'_1=1 \\ b'_1 \neq b_1}}^{q^a}  \sum_{\substack{b'_2=1 \\ b'_2 \neq b_2}}^{q^a}
 \sum_{\substack{b'_3=1 \\ b'_3 \neq b_3}}^{q^{2a}}
  \int_{R_{b'}}|x-y|^{-t} dy.$$
  
This case is handled analogously to the previous, except that we have a lengthier sum.
We will again make use of the fact that the arithmetic mean dominates the geometric mean:
\begin{IEEEeqnarray*}{cl}
\abs{x-y} \sim \abs{x_1-y_1} \!+\! \abs{x_2 - y_2}\! +\! \abs{x_3-y_3} \geq 3\!\abs{x_1-y_1}^{\frac{1}{3}}\abs{x_2 - y_2}^{\frac{1}{3}} \abs{x_3-y_3}^{\frac{1}{3}} \label{AMGM}
\end{IEEEeqnarray*}
where $x_i \sim \frac{b_i}{q^a}$ for $i=1,2$, $x_3 \sim \frac{b_3}{q^{2a}}$ and $y_i \sim \frac{b'_i}{q^a}$ for $i=1,2$, $y_3 \sim \frac{b'_3}{q^{2a}}$.
It follows that 
\begin{IEEEeqnarray*}{cl}
\abs{x-y}^{-t} \lesssim \left(\frac{\abs{b_1-b'_1}}{q^a}\right)^{\!\!-\frac{t}{3}}\left(\frac{\abs{b_2-b'_2}}{q^a}\right)^{\!\!-\frac{t}{3}} \left(\frac{\abs{b_3-b'_3}}{q^{2a}}\right)^{\!\!-\frac{t}{3}},
\end{IEEEeqnarray*}
and we can bound the integral $J(b,b')$ by 
\begin{align*}
J(b,b') &\lesssim  \Bigg(\frac{  \big(q^{a}\big)^{\frac{4}{3}t} \, V_R}{\abs{E_q}}\Bigg) \!\!
 \sum_{\substack{b'_1=1 \\ b'_1 \neq b_1}}^{q^a}  \sum_{\substack{b'_2=1 \\ b'_2 \neq b_2}}^{q^a}
 \sum_{\substack{b'_3=1 \\ b'_3 \neq b_3}}^{q^{2a}}
\abs{b_1-b'_1}^{-\frac{t}{3}}
\abs{b_2-b'_2}^{-\frac{t}{3}} 
\abs{b_3-b'_3}^{-\frac{t}{3}}\\
&=  \Bigg(\frac{\big(q^{a}\big)^{\frac{4}{3}t}}{N_R}\Bigg) 
\Bigg[\sum_{\substack{b'_1=1 \\ b'_1 \neq b_1}}^{q^a} 
 \abs{b_1-b'_1}^{\!-\frac{t}{3}}\Bigg]^2
\Bigg[  \sum_{\substack{b'_3=1 \\ b'_3 \neq b_3}}^{q^{2a}}
\abs{b_3-b'_3}^{\!-\frac{t}{3}}\Bigg].
\end{align*}

Since $t<3$, using the integral test, this is bounded by 
$$J(b,b') \lesssim
\left(\frac{\big(q^{a}\big)^{\frac{4}{3}t}}{N_R}\right) 
  \left( q^{a(1-\frac{t}{3})} \right)^2  
  \left( q^{2a(1-\frac{t}{3})} \right).$$

Plugging in $N_R= q^3$ and $a=\frac{3}{4}$, this expression is identically equal to $1$.

%%%%%%%%%%%%%%%%%CASE B (ii)
\subsection{``Numerous Rectangles" in 3-dimensions}\label{m=domterms}
In this section, we handle the case when 
$b'_3 \neq b_3$ and exactly one of the other $b'_i \neq b_i$
Upon inspection, it becomes clear that this case cannot be handled using the fact that the geometric mean dominates the arithmetic mean. Instead, an argument using dominant terms is given.  
\vs

We give the proof for $b'_1 \neq b_1$ and $b'_2 = b_2$, as both cases are symmetric.
Here,
$$J(b,b') =  \left(\frac{1}{\abs{E_q}}\right) 
 \sum_{\substack{b'_1=1 \\ b'_1 \neq b_1}}^{q^a}  \sum_{\substack{b'_3=1 \\ b'_3 \neq b_3}}^{q^{2a}}
  \int_{R_{b'}}|x-y|^{-t} dy.$$

For $x\in R_b$ and $y\in R_{b'}$, we estimate
 $x_1 \sim \frac{b_1}{q^a}$, $x_2 \sim \frac{b_2}{q^a}$, and $x_3 \sim \frac{b_3}{q^{2a}}$, as well as
 $y_1 \sim \frac{b'_1}{q^a}$,  $y_2 \sim \frac{b_2}{q^a}$, and $y_3 \sim \frac{b'_3}{q^{2a}}$, and we appeal to 
 \eqref{absRel3D}.  
 \vs
 
For simplicity, we will conduct the estimate for $b=(0,0,0)$, as the more general argument follows similarly.   Thus, 
for $y\in R_{b'}$, $\abs{x-y}\sim  \frac{b'_1}{q^a}+\frac{b'_3}{q^{2a}}$, where $\frac{b'_1}{q^a} \in \big\{\frac{1}{q^a}, \frac{2}{q^a}, \dots, \frac{q^a}{q^a} \big\}$ and $\frac{b'_3}{q^{2a}} \in \big\{\frac{1}{q^{2a}}, \frac{2}{q^{2a}}, \dots, \frac{q^{2a}}{q^{2a}} \big\}$.
This naturally leads to two cases: 
\begin{description}
\item[Case I] The horizontal distance in $z_1$ dominates the vertical: $\frac{b'_1}{q^a} \geq \frac{b'_3}{q^{2a}}$.\vs

\item[Case II] The vertical distance between $x$ and $y$ dominates: $\frac{b'_3}{q^{2a}}>\frac{b'_1}{q^a}$.\vs
\end{description}

We now proceed to bound $J(b,b')$ in each case.

\begin{itemize}

%%%%%%%%%%%%%%CASE I
\item[Case I:] Assume $\frac{b'_1}{q^a} \geq \frac{b'_3}{q^{2a}}$, and estimate $\abs{x-y}$ by $\frac{b'_1}{q^a}$. Observe that since $\frac{b'_1}{q^a} \geq \frac{b'_3}{q^{2a}} \Leftrightarrow b'_3 \leq b'_1q^a$, we need only sum the $b'_3$ to $b'_1q^a \leq q^{2a}$, resulting in the following estimation: 
\begin{IEEEeqnarray*}{cl}
J(b,b') &=  \left(\frac{1}{\abs{E_q}}\right)   \sum_{b'_1=1}^{q^a} \sum_{b'_3=1}^{b'_1q^a} \int_{R_{b'}} \abs{x-y}^{-t}dy \nonumber\\
&\sim \left(\frac{V_R}{\abs{E_q}}\right) \sum_{b'_1=1}^{q^a} \sum_{b'_3=1}^{b'_1q^a} \left(\frac{b'_1}{q^a}\right)^{\!\!-t} \nonumber\\
&= \frac{1}{N_R} \,q^{at} \sum_{b'_1=1}^{q^a} (b'_1q^a)(b'_1)^{-t}. \label{2(B)I 3D}
\end{IEEEeqnarray*}
Applying the integral test and recalling that $2.5\le <3$, this expression is bounded by 
$$ \frac{1}{N_R} \,q^{at} q^a \lesssim 1,$$
since $N_R= q^3$ and $a=\frac{3}{4}$
\vs

%%%%%%%%%%%%%%CASE II
\item[Case II:]  Assume $\frac{b'_1}{q^a} < \frac{b'_3}{q^{2a}}$, so we estimate $\abs{x-y}$ by $\frac{b'_3}{q^{2a}}$. 
Observe that since $\frac{b'_1}{q^a}~<~\frac{b'_3}{q^{2a}} \Leftrightarrow b'_3 > b'_1q^a$, we need only sum $b'_3$ from $b'_1q^a$ to the upper bound, $q^{2a}$. Now

\begin{IEEEeqnarray*}{cl}
J(b,b') &=  \left(\frac{1}{\abs{E_q}}\right)   \sum_{b'_1=1}^{q^a} \sum_{b'_3=b'_1q^a}^{q^{2a}} \int_{R_{b'}} \abs{x-y}^{-t}dy \nonumber\\
 &\sim  \left(\frac{V_R}{\abs{E_q}}\right)  \sum_{b'_1=1}^{q^a} \sum_{b'_3=b'_1q^a}^{q^{2a}}  \left(\frac{b'_3}{q^{2a}}\right)^{\!\!-t} \nonumber\\
&= \left(\frac{1}{N_R}\right) \,q^{2at} \sum_{b'_1=1}^{q^a} \sum_{b'_3=b'_1q^a}^{q^{2a}}(b'_3)^{-t}. \label{2(B)II 3D}
\end{IEEEeqnarray*}

Applying the integral test for $2<t<3$, 
$$\sum_{b'_3=b'_1q^a}^{q^{2a}} (b'_3)^{-t} \sim \big(b'_1q^a\big)^{1-t}.$$

Plugging this back into the bound for $J(b,b')$ we have
$$J(b,b') \lesssim \left(\frac{1}{N_R}\right) \,q^{2at} \sum_{b'_1=1}^{q^a} \big(b'_1q^a\big)^{1-t}.$$

Again applying the integral test to the sum in $b'_1$, we see that 
$$J(b,b') \lesssim \left(\frac{1}{N_R}\right) \,q^{2at} \big(q^a\big)^{1-t} = \frac{1}{N_R} \,q^{at} q^a,$$
the same upper bound as in Case I. 

\end{itemize}

%%%%%%%%%%%%%%%%%CASE B (iii)

\subsection{``High Proximity Rectangles" in 3-dimensions} \label{b3NEQb'33D}
Finally, we consider the most complex case when 
$b'_3 \neq b_3$ and both of the other $b'_i = b_i$ for $i=1$ and 2. 
Upon inspection, it becomes clear that this case cannot be handled using the techniques of the previous sections. Instead, a finer decomposition of the rectangle $R$ into cubes is required.  
\vs

Observe that the minimal vertical distance between distinct rectangles $R_b$ and $R_{b'}$ is significantly smaller than the minimal horizontal distance.  
The minimal distance between distinct rectangles is attained in this case.  
Consequently, this case requires a more cautious consideration of the locations of $x$ and $y$ within $R_b$ and $R_{b'}$, respectively. We will subdivide each rectangle into $q^{3 + 2a}$ smaller cubes of side-length $\frac{1}{q^{a+\tau}}$. 
Let 
$C_{\ell,b}$ be the cube with side-length $\frac{1}{q^{a+\tau}}=q^{-a-\tau}$, centered at $\big(\frac{b_1}{q^a}+\frac{\ell_1 }{q^{a+\tau}}, \frac{b_2}{q^a}+\frac{\ell_2 }{q^{a+\tau}}, \frac{b_3}{q^{2a}}\big)$, where $\ell_1, \ell_2 \in \{0, 1, \dots, q^a\}$. Let $V_C$ denote the volume of such a cube.
\vs

Hence,
$$J(b,b') =  \left(\frac{1}{\abs{E_q}}\right) 
\sum_{\substack{b'_3=1 \\ b'_3 \neq b_3}}^{q^{2a}}
  \int_{R_{b'}}|x-y|^{-t} dy.$$

For $x\in R_b$ and $y\in C_{\ell,b'}$, 
we again use \eqref{absRel3D}, but we make the approximation of $x_i \sim \frac{b_i}{q^a}$ for $i=1,2$, $x_3 \sim \frac{b_3}{q^{2a}}$ and $y_i \sim \frac{b'_i}{q^a}+\frac{\ell_i}{q^{a+\tau}}$ for $\ell_i \in \{0,1, \dots, q^a\}$, for $i=1,2$, $y_3 \sim \frac{b'_3}{q^{2a}}$. Since, by assumption, $b'_1=b_1$ and $b'_2=b_2$, we thus have:
\begin{IEEEeqnarray*}{cl}
|x-y| \sim \frac{\ell_1}{q^{a+\tau}} + \frac{\ell_2}{q^{a+\tau}} + \abs{\frac{b_3}{q^{2a}} - \frac{b'_3}{q^{2a}}}.
\end{IEEEeqnarray*}

As in the previous case, we will conduct the estimate for $b=(0,0,0)$, as the more general argument follows from a similar argument.   Now
$$
|x-y| \sim \frac{\ell_1}{q^{a+\tau}} + \frac{\ell_2}{q^{a+\tau}} + \abs{\frac{b'_3}{q^{2a}}}.$$

This leads to four cases:
\begin{description}
\item[Case I] The horizontal distances in both directions
dominate the vertical: $\frac{\ell_i }{q^{a+\tau}} \geq \frac{b'_3}{q^{2a}}$ for both $i = 1,2$, which occurs when $\ell_i \geq b'_3 q^{\tau-a}$.\\
Note that, since $\ell_i \leq q^a$, this is not possible if $b'_3>q^{2a-\tau}$. 
\vs

\item[Case II] The horizontal distance in $z_1$ dominates the vertical: $\frac{\ell_1 }{q^{a+\tau}} \geq \frac{b'_3}{q^{2a}}$, but $\frac{\ell_2 }{q^{a+\tau}} < \frac{b'_3}{q^{2a}}$. \\

\item[Case III] The horizontal distance in $z_2$ dominates the vertical: $\frac{\ell_2 }{q^{a+\tau}} \geq \frac{b'_3}{q^{2a}}$, but $\frac{\ell_1 }{q^{a+\tau}} < \frac{b'_3}{q^{2a}}$, which is equivalent to Case II by symmetry. \\

\item[Case IV] The vertical distance between $x$ and $y$ dominates: $\frac{\ell_i }{q^{a+\tau}} < \frac{b'_3}{q^{2a}}$ for both $i = 1,2$, which occurs when 
$\ell_i<\frac{b'_3q^{\tau}}{q^a}$.

\end{description}

\begin{itemize}
%%%%%%%%%%%%%%CASE I: ELLi DOMINANT
\item[Case I:] Assume $\frac{\ell_1 }{q^{a+\tau}},\frac{\ell_2 }{q^{a+\tau}} \geq \frac{b'_3}{q^{2a}}$, so that 
\begin{equation}\label{abs_aprx}
\abs{x-y} \sim \frac{1}{q^{a+\tau}}\abs{\ell_1+\ell_2}.
\end{equation}
 Recall that $\ell_i \leq q^a$, so for the horizontal distance to be dominant, we must have $b'_3 \leq \ell_iq^{a-\tau} \leq q^{2a-\tau}$. Thus, we need only sum $b'_3$ up to $q^{2a-\tau}$ since the vertical distance is dominant for larger $b'_3$, and we have

$$J(b,b')\cdot \abs{E_q} = 
 \sum_{b'_3=1}^{q^{2a-\tau}} 
 \sum_{\ell_1=\frac{b'_3q^{\tau}}{q^a}}^{q^a}   \sum_{\ell_2=\frac{b'_3q^{\tau}}{q^a}}^{q^a}
 \int_{C_{\ell,b'}} \abs{x-y}^{-t}dy.$$

We further divide this sum into two sums, one where $\ell_1 \geq \ell_2$, so $\abs{x-y} \sim \frac{1}{q^{a+\tau}}\abs{\ell_1}$ by \eqref{abs_aprx}, and the other where $\ell_1 \leq \ell_2$, so $\abs{x-y} \sim \frac{1}{q^{a+\tau}}\abs{\ell_2}$.  Summing in $\ell_2$, we have
\begin{IEEEeqnarray}{cl}
 \sum_{b'_3=1}^{q^{2a-\tau}} & \sum_{\ell_1=\frac{b'_3q^{\tau}}{q^a}}^{q^a}\sum_{\ell_2=\frac{b'_3q^{\tau}}{q^a}}^{\ell_1} \int_{C_{\ell,b'}} \abs{x-y}^{-t}dy 
+ 
\sum_{b'_3=1}^{q^{2a-\tau}} \sum_{\ell_1=\frac{b'_3q^{\tau}}{q^a}}^{q^a}\sum_{\ell_2=\ell_1}^{q^a} \int_{C_{\ell,b'}} \abs{x-y}^{-t}dy  \nonumber\\
&\sim \sum_{b'_3=1}^{q^{2a-\tau}} \sum_{\ell_1=\frac{b'_3q^{\tau}}{q^a}}^{q^a}\sum_{\ell_2=\frac{b'_3q^{\tau}}{q^a}}^{\ell_1} \!\!\!V_C\left(\frac{1}{q^{a+\tau}}\abs{\ell_1}\right)^{\!\!\!-t} \!+ 
\sum_{b'_3=1}^{q^{2a-\tau}} \sum_{\ell_1=\frac{b'_3q^{\tau}}{q^a}}^{q^a}\sum_{\ell_2=\ell_1}^{q^a} \!V_C\left(\frac{1}{q^{a+\tau}}\abs{\ell_2}\right)^{\!\!\!-t} \label{ell,t<1 3D}\nonumber\\
&\sim V_C \!\left(\!\frac{1}{q^{a+\tau}}\!\right)^{\!\!\!-t} \!\,\,\sum_{b'_3=1}^{q^{2a-\tau}} \sum_{\ell_1=\frac{b'_3q^{\tau}}{q^a}}^{q^a} \!\!\abs{\ell_1}^{-t}\!\left(\!\ell_1-\frac{b'_3q^{\tau}}{q^a}\!\right) \!+ 
V_C \bigg(\!\frac{1}{q^{a+\tau}}\!\bigg)^{\!\!\!-t} \,\,\!\sum_{b'_3=1}^{q^{2a-\tau}} \sum_{\ell_1=\frac{b'_3q^{\tau}}{q^a}}^{q^a} \!\!\ell_1^{1-t} \nonumber\\ \label{ellDom13D}
&= I + II \nonumber
\end{IEEEeqnarray}
since $t>1$.
We next apply the integral test to sum in $\ell_1$, recognizing 
$I \leq II$ and $t> 2$, we have
\begin{IEEEeqnarray*}{cl}
\int_{b'_3/q^{a-\tau}}^{q^a} x^{-t}\left(x-\frac{b'_3q^{\tau}}{q^a}\right)dx \ \leq \int_{b'_3/q^{a-\tau}}^{q^a} x^{1-t}dx \sim \left(\frac{b'_3q^{\tau}}{q^a}\right)^{\!\!2-t} \!.
\end{IEEEeqnarray*}

Thus, 
\begin{IEEEeqnarray*}{cl}
I + II \, &\lesssim V_C \left(\frac{1}{q^{a+\tau}}\right)^{\!\!\!-t} \,\sum_{b'_3=1}^{q^{2a-\tau}} \left(\frac{b'_3q^{\tau}}{q^a}\right)^{\!\!2-t} \\
&= V_C \, q^{2at}\left(\frac{q^\tau}{q^a}\right)^{\!\!2} \,\sum_{b'_3=1}^{q^{2a-\tau}} (b'_3)^{2-t}.
\end{IEEEeqnarray*}

Once again utilizing the integral test and recognizing that $t<3$,
we bound $J(b,b') \cdot \abs{E_q}$ by 
\begin{equation}\label{later}
V_C \, q^{2at}q^{2\tau-2a}(q^{2a-\tau})^{3-t}=V_C \, q^{2\tau-2a}q^{6a}(q^{-\tau})^{3-t} = V_C \,q^{4a}(q^\tau)^{t-1}.\end{equation}

Seeking a bound for $J(b,b')$, and recalling that $\abs{E_q}=N_C \cdot V_C$, we have 
\begin{IEEEeqnarray*}{cl}
J(b,b') \lesssim 
\frac{1}{|E_q|}V_C \, q^{4a} \big(q^{\tau}\big)^{t-1} = \frac{V_C \,q^{4a} \big(q^{\tau}\big)^{t-1}}{N_C \cdot V_C} =
\frac{(q^\tau)^{t-1}}{q^{2a}},
\end{IEEEeqnarray*}
since $N_C=q^{6a}$. 
This is bounded since $\tau \leq \frac{2a}{t-1}$ by \eqref{taubound3D}.
 \vs

%%%%%%%%%%%CASE II: MIXED DOMINANT
\item[Case II:] Assume $\frac{\ell_1 }{q^{a+\tau}} \geq \frac{b'_3}{q^{2a}}$, and $\frac{\ell_2 }{q^{a+\tau}} < \frac{b'_3}{q^{2a}}$, we omit Case III, as it is equivalent to Case II by symmetry. For Case II, we have $\abs{x-y} \sim \frac{1}{q^{a+\tau}}\abs{\ell_1}$.
\begin{IEEEeqnarray}{cl}
J(b,b') \cdot \abs{E_q}&=  \sum_{b'_3=1}^{q^{2a-\tau}} \sum_{\ell_1=\frac{b'_3q^{\tau}}{q^a}}^{q^a} \sum_{\ell_2=1}^{\frac{b'_3q^{\tau}}{q^a}} \int_{C_{\ell,b'}} \abs{x-y}^{-t} dy \nonumber\\
&\sim \sum_{b'_3=1}^{q^{2a-\tau}} \sum_{\ell_1=\frac{b'_3q^{\tau}}{q^a}}^{q^a}\sum_{\ell_2=1}^{\frac{b'_3q^{\tau}}{q^a}} V_C \left(\frac{1}{q^{a+\tau}}\abs{\ell_1}\right)^{\!\!-t} \nonumber\\
&= V_C \left(\frac{1}{q^{a+\tau}}\right)^{\!\!-t} \! \left(\frac{1}{q^{a-\tau}}\right)\sum_{b'_3=1}^{q^{2a-\tau}}b'_3 \sum_{\ell_1=\frac{b'_3q^{\tau}}{q^a}}^{q^a} \abs{\ell_1}^{-t}. \label{ell1DomBound13D}
\end{IEEEeqnarray}

Applying the integral test to sum in $\ell_1$, and since $t> 2$, we see that
\begin{IEEEeqnarray*}{cl}
\int_{\frac{b'_3q^{\tau}}{q^a}}^{q^{a}} x^{-t}dx = x^{1-t} \biggr|_{\frac{b'_3q^{\tau}}{q^a}}^{q^{a}} \sim \left(\frac{b'_3q^{\tau}}{q^a}\right)^{\!\!1-t}.
\end{IEEEeqnarray*}
Thus, (\ref{ell1DomBound13D}) is approximately
\begin{IEEEeqnarray*}{cl}
V_C \left(\frac{1}{q^{a+\tau}}\right)^{\!\!-t} \left(\frac{q^\tau}{q^a}\right)\sum_{b'_3=1}^{q^{2a-\tau}}b'_3 \left(\frac{b'_3q^{\tau}}{q^a}\right)^{\!\!1-t} 
&= V_C \,q^{2at} q^{2\tau-2a} \sum_{b'_3=1}^{q^{2a-\tau}} (b'_3)^{2-t}.
\end{IEEEeqnarray*}

Once again, we apply the integral test to the sum in $b'_3$:
\begin{IEEEeqnarray*}{cl}
\int_{1}^{q^{2a-\tau}} x^{2-t}dx = x^{3-t} \biggr|_{1}^{q^{2a-\tau}} \sim \big(q^{2a-\tau}\big)^{3-t}
\end{IEEEeqnarray*}
since $t<3$.
Thus we have
\begin{IEEEeqnarray*}{cl}
J(b,b') \cdot \abs{E_q} \, &\sim V_C \,q^{2at} q^{2\tau-2a} (q^{2a-\tau})^{3-t} \\
&= V_C \, q^{4a} (q^{-\tau})^{1-t}.
\end{IEEEeqnarray*}
Noticing that this is the same bound as that obtained in \eqref{later} in Case 1, we conclude that $J(b,b') \lesssim 1$
since $\tau \leq \frac{2a}{t-1}$ by \eqref{taubound3D}.
 \vs

%%%%%%%%%%%%%%CASE IV: B3 DOMINANT
\item[Case IV:] Assume $\frac{\ell_1 }{q^{a+\tau}},\frac{\ell_2 }{q^{a+\tau}} < \frac{b'_3}{q^{2a}}$. We will estimate $\abs{x-y}$ by $\frac{b'_3}{q^{2a}}$.
Observe that for $i=1,2$,
\begin{IEEEeqnarray}{cl}
\ell_i \leq \min\left\{\frac{b'_3q^{\tau}}{q^a},q^a\right\} =
\left\{ \begin{IEEEeqnarraybox}[][c]{l?s} 
\IEEEstrut
q^a & for $b'_3>q^{2a-\tau}$\\
\frac{b'_3q^{\tau}}{q^a} & for $b'_3 \leq q^{2a-\tau}$.
\nonumber
\IEEEstrut
\end{IEEEeqnarraybox} 
\right.
\end{IEEEeqnarray}
We utilize this to subdivide our sum in $b'_3$ as follows:
\begin{enumerate}
\item $b'_3>q^{2a-\tau}$, for which we must sum $\ell_i \in \{1, \dots, q^a\}$, since $\ell_i \leq q^a < \frac{b'_3q^{\tau}}{q^a}$, and 
\item $b'_3 \leq q^{2a-\tau}$, for which we need only sum $\ell_i \in \big\{1, \dots, \frac{b'_3q^{\tau}}{q^a}\big\}$, since $\ell_i \leq \frac{b'_3q^{\tau}}{q^a} \leq q^a$.\\
\end{enumerate}

Hence,
\begin{IEEEeqnarray}{cl}
J(b,b') \cdot \abs{E_q} \,&= \sum_{b'_3=1}^{q^{2a}} \sum_{\ell_1=1}^{\frac{b'_3q^{\tau}}{q^a}} \sum_{\ell_2=1}^{\frac{b'_3q^{\tau}}{q^a}} \int_{C_{\ell,b'}} \abs{x-y}^{-t} dy \nonumber\\
&= \sum_{b'_3=q^{2a-\tau}}^{q^{2a}} \sum_{\ell_1=1}^{q^a} \sum_{\ell_2=1}^{q^a} \int_{C_{\ell,b'}} \abs{x-y}^{-t} dy + \sum_{b'_3=1}^{q^{2a-\tau}} \sum_{\ell_1=1}^{\frac{b'_3q^{\tau}}{q^a}} \sum_{\ell_2=1}^{\frac{b'_3q^{\tau}}{q^a}}\int_{C_{\ell,b'}} \abs{x-y}^{-t} dy \nonumber\\
&= \sum_{b'_3=q^{2a-\tau}}^{q^{2a}} q^a q^a V_C \left(\frac{b'_3}{q^{2a}}\right)^{\!\!-t} + \sum_{b'_3=1}^{q^{2a-\tau}} \frac{b'_3q^{\tau}}{q^a} \frac{b'_3q^{\tau}}{q^a} V_C \left(\frac{b'_3}{q^{2a}}\right)^{\!\!-t} \nonumber\\
&= V_C \,q^{2a} q^{2at} \sum_{b'_3=q^{2a-\tau}}^{q^{2a}} (b'_3)^{-t} + V_C \, q^{2\tau-2a}q^{2at}\sum_{b'_3=1}^{q^{2a-\tau}} (b'_3)^{2-t}. \label{b3Dom1Bound3D}
\end{IEEEeqnarray}

First, note that since $t>2 $,
\begin{IEEEeqnarray*}{cl}
\sum_{b'_3=q^{2a-\tau}}^{q^{2a}} (b'_3)^{-t} = \int_{q^{2a-\tau}}^{q^{2a}} x^{-t}dx 
\sim (q^{2a-\tau})^{1-t}.
\end{IEEEeqnarray*}

Next, we see that 
\begin{IEEEeqnarray*}{cl}
\sum_{b'_3=1}^{q^{2a-\tau}} (b'_3)^{2-t} = \int_{1}^{q^{2a-\tau}} x^{2-t}dx 
\sim (q^{2a-\tau})^{3-t}.
\end{IEEEeqnarray*}
since $t<3$.
So, we have 
$$(\ref{b3Dom1Bound3D}) \sim V_C \, q^{4a} (q^\tau)^{t-1} + V_C \, (q^\tau)^{t-1}q^{4a}.$$
Recognizing this as the same bound as in the last two cases, we conclude that  $J(b,b')$ is bounded since $\tau \leq \frac{2a}{t-1} $ by \eqref{taubound3D}.
\end{itemize}

In conclusion, we have verified \eqref{reductionkt} when $b=b'$.

\vskip.125in 
%%%%%%%%%%%%%%%%%GNT/SLICING COMPARE %%%%%%%%%%%%%%%
\section{Sharpness of the Main result and An improvement for small Delta}\label{sharp_section}
In this section, we use the error estimate of Garg, Nevo, and the second listed author in \cite{GNT}
to show that our main result is sharp when  $\delta$ obeys a lower bound dependent on $\al$ and $n$. 
Further, for sufficiently small values of $\de$ where sharpness is not attained, we demonstrate a method to improve Theorem \ref{Heisenberg}.  
As an example, when $\al=2$, our main theorem is sharp when $\de> R^{-\frac{(n-2)}{n}}$, but an improvement is possible when $\de< R^{-\frac{(n-2)}{n}}$.

%%%%%%%%%%%%SHARPNESS
\subsection{Sharpness}
We begin by establishing sharpness of Theorem \ref{Heisenberg} in several regimes by showing that the inequality in \eqref{Heisenberg_count} is in fact an equality for a range of $\de$ dependent on $\al$ and $n$.  
\begin{proposition}\label{sharp}
Let $d$ and $\al $ be integers, and set $n=2d+1$. 
Then
\begin{equation}\label{sharp_eq}
 \#\big( \{ m \in \mathbb{Z}^{2d} \ \mathsf{x} \  \mathbb{Z} :  
R-\delta \leq \Vert m \Vert_\al \leq R+ \delta \}\big) 
\sim R^{n}\delta,
 \end{equation} 
 for $R>1$ and $n\geq 3$, 
when $\al\geq 2(n-1)$ and $\delta \geq R^{- \frac{1}{(\al-1)}}$; 
when  $2< \al \le 2(n-1)$ and $\delta\geq R^{-\frac{1}{(2n-3)}}$;
or 
when $\al=2$ and  $\delta \geq R^{-\frac{ (n-2) }{n} }$.  
\end{proposition}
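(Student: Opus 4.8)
The plan is to prove the two inequalities in \eqref{sharp_eq} separately, the upper one coming essentially for free from Theorem~\ref{Heisenberg} and the lower one from the lattice‑point asymptotics for the full Heisenberg balls in \cite{GNT}.

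For the upper bound there is nothing to do: under the stated hypotheses on $\delta$, Theorem~\ref{Heisenberg} already reads $\#\{m:R-\delta\le\|m\|_\al\le R+\delta\}\lesssim R^n\delta$, because in each regime the condition on $\delta$ is exactly the one under which $R^n\delta$ is the larger of the two terms in the maximum in \eqref{Heisenberg_count}. Indeed $R^n\delta\ge R^{\,n-\frac{1}{\al-1}}\iff\delta\ge R^{-\frac{1}{\al-1}}$, and likewise $R^n\delta\ge R^{\,n-\frac{1}{2n-3}}\iff\delta\ge R^{-\frac{1}{2n-3}}$ and $R^n\delta\ge R^{\,n-\frac{n-2}{n}}\iff\delta\ge R^{-\frac{n-2}{n}}$; at the overlap $\al=2(n-1)$ the two relevant cases of \eqref{Heisenberg_count} give the same exponent, so there is no ambiguity. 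Thus the content is the matching lower bound $\#\{m:R-\delta\le\|m\|_\al\le R+\delta\}\gtrsim R^n\delta$.

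For the lower bound I would write $N(R):=\#(\mathbb{Z}^{n}\cap B_R^{\al})$ and recall, from the volume computation in the introduction, that $|B_R^\al|=|B_1^\al|\,R^{n+1}$. The main result of \cite{GNT}, valid for the whole family \eqref{Hball}, has the form
\[
 N(R)=|B_1^\al|\,R^{n+1}+O\!\big(R^{\,n+1-\theta}\big),\qquad R>1,
\]
with an explicit error exponent $\theta=\theta(\al,n)>1$. Since $B_{R-\delta}^\al\subseteq B_{R+\delta}^\al$, the shell count is at least the telescoping difference,
\[
 \#\{m:R-\delta\le\|m\|_\al\le R+\delta\}\;\ge\;N(R+\delta)-N(R-\delta),
\]
and, using $0\le\delta<1<R$ so that $R\pm\delta\sim R$, one expands
\[
 N(R+\delta)-N(R-\delta)=|B_1^\al|\big[(R+\delta)^{n+1}-(R-\delta)^{n+1}\big]+O\!\big(R^{\,n+1-\theta}\big)\;\sim\;R^n\delta+O\!\big(R^{\,n+1-\theta}\big).
\]
It then suffices to check, in each regime, that the error is dominated by the main term under the hypothesis on $\delta$: since $\delta\ge R^{-s}$ with $s=s(\al,n)$ equal to $\frac{1}{\al-1}$, $\frac{1}{2n-3}$, or $\frac{n-2}{n}$ respectively, this amounts to $R^{\,n+1-\theta}\lesssim R^{\,n-s}\le R^n\delta$, i.e.\ to the inequality $\theta(\al,n)\ge 1+s(\al,n)$. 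Granting it, $N(R+\delta)-N(R-\delta)\gtrsim R^n\delta$, and with the upper bound this yields \eqref{sharp_eq}.

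The only substantive step is therefore this verification $\theta(\al,n)\ge 1+s(\al,n)$, done case by case over $\al\ge 2(n-1)$, $2<\al\le 2(n-1)$, and $\al=2$ using the explicit error exponents recorded in \cite{GNT}; the three thresholds on $\delta$ are precisely the values at which $R^n\delta$ begins to dominate the maximum in \eqref{Heisenberg_count}, which is exactly why this proposition amounts to the statement that Theorem~\ref{Heisenberg} is sharp there. A minor point is the boundary value $\delta=R^{-s}$, where main term and error are a priori of the same order: there one either requires $\delta$ to exceed a fixed multiple of $R^{-s}$ (the constant coming from \cite{GNT}), or uses that $\theta>1+s$ strictly, so the error is genuinely of lower order once $R$ is large. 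No decay or energy estimates from Sections~\ref{Curvature&Decay}--\ref{GeoSum} are needed; the full‑ball count of \cite{GNT} is used as a black box.
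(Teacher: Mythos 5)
Your proposal is correct and follows essentially the same route as the paper: the upper bound is read off from Theorem~\ref{Heisenberg} (the hypotheses on $\delta$ are exactly those making $R^n\delta$ the dominant term in the maximum), and the lower bound comes from telescoping the full-ball count of \cite{GNT} and checking that the error term $E(R)$ is dominated by $R^n\delta$ in each regime. The paper writes the difference as $N(R+\delta)-N(R)$ with the explicit error exponents from \cite[Theorem 1.1]{GNT} rather than your abstract $\theta(\al,n)$, but the substance and the deferred case-by-case verification are the same.
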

\vs

 To prove \eqref{sharp_eq}, 
  recall the definition of the Heisenberg norm ball $B_R^\alpha= B_R^{\alpha, 1}$ in \eqref{Hball}, and recall the volume computation,
$ |B_R^\alpha| = R^{2d+2}|B_1^\alpha|.$
Defining 
\begin{equation}\label{error_eq}
E(R) := \#\big(\mathbb{Z}^{2d+1} \cap B_{R}^{\alpha}\big) - |B_R^\alpha|,
\end{equation}
we have 
\begin{align*}
 \#\big( \{ m \in \mathbb{Z}^{2d} \ \mathsf{x} \  \mathbb{Z} :  R \leq \Vert m \Vert_\al \leq R+ \delta \}\big) 
&= \#\big(\mathbb{Z}^{2d+1} \cap  B_{R+\delta}^{\alpha}\big) - \#\big(\mathbb{Z}^{2d+1} \cap  B_{R}^{\alpha}\big) \\
&= |B_{R+\delta}^\alpha| + E(R+\delta) - \big(|B_R^\alpha| +E(R)\big) \\
&= |B_{R+\delta}^\alpha| - |B_R^\alpha| + E(R+\delta) - E(R)\\
&\geq cR^{2d+1}\delta -  \left( |E(R+\delta)| +|E(R)| \right)
\end{align*}
for some constant $c>0$, where in the last line we observe that $ |B_{R+\delta}^\alpha| - |B_R^\alpha| \le cR^{2d+1}\delta$ and subtract the absolute values of the errors.  
We now verify that the first term, $cR^{2d+1}\de$, dominates the error terms and hence is the desired lower bound. 
\vs

The following bounds on the error were achieved by Garg, Nevo, and Taylor in \cite[Theorem 1.1]{GNT} for all $\alpha$: \begin{equation}
|E(R)| \lesssim \left\{ \begin{IEEEeqnarraybox}[][c]{l?s} 
\IEEEstrut
R^{2d}& for $d\geq 1$ ($n\geq 3$) and $\al=2$\\
R^{2 + \max{\{0, \de(\al) \}} } \log(R)& for $d=1$ ($n=3$) and $\al>2$ \\
R^4\big(\log(R)\big)^{2/3} & for $d=2$ ($n=5$) and $\al > 2$\\
R^{2d} & for $d\geq 3$ ($n \geq 7$) and $\al > 2$,
\nonumber
\IEEEstrut
\end{IEEEeqnarraybox}
\right.  
\end{equation}
where $\delta(\al) = \frac{2\al - 8}{3\al-4}$ so that in particular $\max{\{0, \de(\al) \}} = 0$ for $ \al \le 4$.  
\vs

With these bounds in tow, it is not hard to verify that $|E(R)| \le \frac{c}{100}R^{n}\delta$ when $\delta \geq C\max\{ \frac{\log{R}}{R},  R^{-1+ \max{\{0, \de(\al) \}} } \}$,  for a sufficiently small constant $C$. 
A computation shows that the lower bounds on $\de$ assumed in Proposition \ref{sharp} are more strict, and we conclude then that 
\begin{equation}\label{lower}
 \#\big( \{ m \in \mathbb{Z}^{2d} \ \mathsf{x} \  \mathbb{Z} :  R \leq \Vert m \Vert_\al \leq R+ \delta \}\big) 
\gtrsim cR^{2d+1}\delta.
\end{equation}
In summary, the lower bound in \eqref{lower} agrees with the upper bound in  \eqref{Heisenberg_count} 
for the range of $\de$ specified in Proposition \ref{sharp}, 
and Theorem \ref{Heisenberg} is sharp in this regime.

\subsection{An improvement to Theorem \ref{Heisenberg} for small $\delta$}\label{sec_alt}
Utilizing the results from \cite[Theorem 1.1]{GNT}, we can achieve the following bounds on the number of lattice points within a $\delta$-thickening of a given Heisenberg sphere of radius $R$.

%%%%%%%%%%%%%%%%%APPLYING GNT PROPOSITION
\begin{proposition} \label{useGNT}
For 
$d\geq 1$, $R>1$, and $\de \in [0,1)$,
\begin{equation}
 \#\big( \{ m \in \mathbb{Z}^{2d} \ \mathsf{x} \  \mathbb{Z} :  R \leq \Vert m \Vert_\al \leq R+ \delta \}\big) 
\lesssim \max\{R^{n}\delta, \abs{E(R)}\}
\end{equation}
where $\abs{E(R)}$ is the error term defined above from \cite{GNT}.
\end{proposition}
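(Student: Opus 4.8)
The plan is to repeat the elementary bookkeeping used in the proof of Proposition~\ref{sharp}, now tracking upper rather than lower bounds. First I would write the shell count as a difference of ball counts,
$$\#\big(\{m \in \mathbb{Z}^{2d+1} : R \le \Vert m \Vert_\al \le R + \delta\}\big) = \#\big(\mathbb{Z}^{2d+1} \cap B_{R+\delta}^\al\big) - \#\big(\mathbb{Z}^{2d+1} \cap B_R^\al\big),$$
and then use the definition \eqref{error_eq} of $E(R)$ together with the volume identity $|B_R^\al| = R^{2d+2}|B_1^\al|$ to rewrite the right-hand side as $\big(|B_{R+\delta}^\al| - |B_R^\al|\big) + \big(E(R+\delta) - E(R)\big)$.

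Next I would estimate the two pieces separately. For the volume difference, since $0 \le \delta < 1$ and $R > 1$, the mean value theorem applied to $r \mapsto r^{2d+2}$ (equivalently the binomial theorem) gives $|B_{R+\delta}^\al| - |B_R^\al| = \big((R+\delta)^{2d+2} - R^{2d+2}\big)|B_1^\al| \le (2d+2)(R+1)^{2d+1}\delta\,|B_1^\al| \lesssim R^{2d+1}\delta = R^n\delta$. For the error difference, I would bound $|E(R+\delta) - E(R)| \le |E(R+\delta)| + |E(R)|$ and observe that $|E(R+\delta)| \lesssim |E(R)|$: indeed $R < R+\delta < R+1$, and each of the four regimes of the \cite[Theorem 1.1]{GNT} bound recorded in the previous subsection is a nondecreasing function of the radius (a fixed power of $R$, possibly multiplied by a power of $\log R$), so the stated bound at $R+\delta$ is comparable to the one at $R$ with an absolute implied constant.

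Combining the two estimates gives
$$\#\big(\{m \in \mathbb{Z}^{2d+1} : R \le \Vert m \Vert_\al \le R + \delta\}\big) \lesssim R^n\delta + |E(R)| \lesssim \max\{R^n\delta,\, |E(R)|\},$$
which is the claim; should one want the two-sided version $R - \delta \le \Vert m \Vert_\al \le R+\delta$ appearing in Theorem~\ref{Heisenberg}, it follows by applying this estimate to each of the shells $[R-\delta,R]$ and $[R,R+\delta]$ and adding. There is no genuine obstacle in this argument; the only step that deserves a moment's thought is the comparison $|E(R+\delta)| \lesssim |E(R)|$, which rests on the monotonicity of the \emph{stated} GNT bounds (not on any regularity of the oscillating remainder $E$ itself) and on the hypothesis $\delta < 1$, which keeps the radius from being perturbed by more than a bounded amount.
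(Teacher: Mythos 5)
Your proposal is correct and follows essentially the same route as the paper: write the shell count as a difference of ball counts, use the definition of $E(R)$ to split off the volume difference (bounded by $cR^{2d+1}\delta = cR^n\delta$), and control the two error terms by their absolute values. Your explicit justification of the comparison $|E(R+\delta)|\lesssim|E(R)|$ via monotonicity of the stated GNT bounds is a small point the paper leaves implicit, but it is not a different argument.
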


\begin{proof}
Following \eqref{error_eq}, similar to the lower bound computation above, we have
\begin{align*}
 \#\big( \{ m \in \mathbb{Z}^{2d} \ \mathsf{x} \  \mathbb{Z} :  R \leq \Vert m \Vert_\al \leq R+ \delta \}\big) 
&= \#\big(\mathbb{Z}^{2d+1} \cap  B_{R+\delta}^{\alpha}\big) - \#\big(\mathbb{Z}^{2d+1} \cap  B_{R}^{\alpha}\big) \\
&= |B_{R+\delta}^\alpha| + E(R+\delta) - \big(|B_R^\alpha| +E(R)\big) \\
&= |B_{R+\delta}^\alpha| - |B_R^\alpha| + E(R+\delta) - E(R)\\
&\leq cR^{2d+1}\delta + \abs{E(R+\delta)} + \abs{E(R)}
\end{align*}
for some constant $c>0$, where in the last line we observe that $ |B_{R+\delta}^\alpha| - |B_R^\alpha| \le cR^{2d+1}\delta$ and add the absolute values of the errors.
\end{proof}

Observe that, for small $\delta>0$, Proposition~\ref{useGNT} improves on Theorem \ref{Heisenberg}. 
For instance, when $\al=2$ and $\delta \leq R^{-\frac{ (n-2) }{ n } }$, 
Theorem \ref{Heisenberg} yields an upper bound of $R^{n-\frac{ (n-2) }{ n } }$, 
while 
Proposition \ref{useGNT} yields an improved upper bound of
$\max\{ R^{n-1}, R^n \de\}$ for $\delta \leq R^{-\frac{ (n-2) }{ n } }$.  
We note that such improvements come at the cost of a slightly more complicated proof as the error bounds on which Proposition \ref{useGNT} rely depend on the use of intricate Bessel function estimates, unlike that of Theorem~\ref{Heisenberg}.   Nevertheless, the two results working together yield the current best known upper bounds for counting lattice points near the Heisenberg spheres, and in many regimes, as detailed above, Theorem \ref{Heisenberg} is sharp.  
\vs
%

%%%%%%%%%%%%%%%%%%%%%  BIBLIOGRAPHY. %%%%%%%%%%%%%%%%%%%%%


\begin{thebibliography}{8}

\bibitem{Andrews} G. Andrews, ``A lower bound for the volume of strictly convex bodies with many boundary lattice points." \textit{Trans. Amer. Math. Soc.} \textbf{106}, pp. 270-279, 1963.

\bibitem{BIT} M. Bennett, A. Iosevich, K. Taylor. ``Finite chains inside thin subsets of Euclidean space." Analysis and PDE, \textbf{9}, 3, pp. 597-614, 2016.

\bibitem{BW} J. Bourgain, Nigel Watt, ``Mean square of zeta function, circle problem and divisor problem revisited." \textit{arXiv}, Sept. 2017.

\bibitem{BW1} J. Bourgain, Nigel Watt, ``Decoupling for Perturbed Cones and the mean square of $|\zeta(\frac{1}{2}+it)|$." \textit{International Mathematics Research Notices}, 2018, No, 17, pp. 5219-5296, Sept. 2018.


\bibitem{Campo} E. G. Campolongo (2022). \textit{Lattice Point Counting through Fractal Geometry and Stationary Phase for Surfaces with Vanishing Curvature} [Doctoral Dissertation, The Ohio State University].

\bibitem{Chamizo} F. Chamizo, ``Lattice points in bodies of revolution." \textit{Acta Arith.} \textbf{85(3)}, pp. 265-277, 1998.



\bibitem{Cy1} J. Cygan, \textit{Wiener's test for the Brownian motion on the Heisenberg group}, Colloq. Math. \textbf{39(2)} (1978), 367-373.

\bibitem{Cy} J. Cygan, \textit{Subadditivity of homogeneous norms on certain nilpotent Lie groups}, Proc. Amer. Math. Soc. \textbf{83(1)} (1981), 69-70.


\bibitem{Falc86} K. J. Falconer, {\it On the Hausdorff dimensions of distance sets} Mathematika \textbf{32(2)} (1985) 206-212.

\bibitem{Folland2} G. B. Folland, \textit{Harmonic Analysis in Phase Space}. Princeton University Press, 1989.

\bibitem{GNT} 
R. Garg, A. Nevo, K. Taylor, ``The lattice point counting problem on the Heisenberg groups." \textit{Ann. Inst. Fourier, Grenoble} \textbf{65}, 5, pp. 2199-2233, 2015.

\bibitem{Gath} Y. A. Gath, ``The solution of the sphere problem for the Heisenberg group." \textit{J. Ramanujan Math. Soc.} \textbf{35}, No.2, pp. 149-157, 2020.


\bibitem{Gath2} Y. A. Gath, ``On the distribution of the number of lattice points in norm balls on the Heisenberg groups." Oct. 2020.


\bibitem{Herz} C. S. Herz, ``On the number of lattice points in a convex set." \textit{American Journal of Mathematics} \textbf{84}, pp. 126-133, 1962.

\bibitem{Hlawka} E. Hlawka, ``\"{U}ber Integrale auf konvexen K\"{o}rpern, I." \textit{Monatsh. Math.} \textbf{54}, pp. 1-36, 1950. 

\bibitem{Huxley3} M.N. Huxley, ``Exponential sums and lattice points III." \textit{Proceedings of the London Mathematical Society}, \text{volume} \textbf{87(3)}, pp. 591-609, (2003).


\bibitem{IT} A. Iosevich, K. Taylor, ``Lattice points close to families of surfaces, nonisotropic dilations and regularity of generalized Radon transforms." \textit{New York Journal of Mathematics} \textbf{17}, pp. 811-828, 2011.

\bibitem{ITtrees} A. Iosevich, K. Taylor, ``Finite trees inside thin subsets of $\R^d$.'' \textit{Modern methods in operator theory and harmonic analysis}, 51–56, Springer Proc. Math. Stat., 291, Springer, Cham, 2019.

\bibitem{IKKN} A. Ivi\'{c}, E. Kr\"{a}tzel, M. K\"{u}hleitner, W. G. Nowak, ``Lattice points in large regions and related arithmetic functions: Recent developments in a very classic topic." \textit{Elementare und analytishe Zahlentheorie}, Schr. Wiss. Ges. Johann Wolfgang Goethe Univ., Frankfurt am Main, \textbf{20}, pp. 89-128, 2006,

\bibitem{Kor} A. Kor\'anyi, \textit{Geometric properties of Heisenberg-type groups}, Adv. Math. \textbf{56(1)} (1985), 28-38.


\bibitem{Krtl2} E. Kr\"{a}tzel, ``Lattice points in some special three-dimensional convex bodies with points of Gaussian curvature zero at the boundary." \textit{Comment. Math. Univ. Carolin.} \textbf{43(4)}, pp. 755-771, 2002.

\bibitem{Krtl3} E. Kr\"{a}tzel, ``Lattice points in three-dimensional convex bodies with points of Gaussian curvature zero at the boundary." \textit{Monatsh. Math.} \textbf{137(3)}, pp. 197-211, 2002.

\bibitem{KN1} E. Kr\"{a}tzel, W. G. Nowak, ``The lattice discrepancy of bodies bounded by a rotating Lam\'{e}'s curve." \textit{Monatsh. Math.} \textbf{154(2)}, pp. 145-156, 2008.


\bibitem{L10} M.C. Lettington, ``Integer points close to convex hypersurfaces." \textit{Acta Arith.} \textbf{141}, pp. 73-101, 2010.

\bibitem{Littman} W. Littman, ``Fourier transforms of surface-carried measures and differentiability of surface averages." \textit{Bull. Amer. Math. Soc.} \textbf{69(6)}, pp. 766-770, 1963.


\bibitem{Mattila15} P. Mattila, \textit{Fourier Analysis and Hausdorff Dimension}. Cambridge University Press, 2015.

\bibitem{Nowak} W. G. Nowak, ``On the lattice discrepancy of bodies of rotation with boundary points of curvature zero." \textit{Arch. Math.} \textbf{90(2)}, pp. 181-192, 2008.

\bibitem{Peter1} M. Peter, ``The local contribution of zeros of curvature to lattice points asymptotics." \textit{Math. Z.} \textbf{233(4)}, pp. 803-815, 2000.

\bibitem{Peter2} M. Peter, ``Lattice points in convex bodies with planar points on the boundary." \textit{Monatsh. Math.} \textbf{135(1)}, pp. 37-57, 2002.

\bibitem{Randol1} B. Randol, ``A lattice point problem." \textit{Trans. Amer. Math. Soc.} \textbf{121}, pp. 257-268, 1966.

\bibitem{Randol2} B. Randol, ``A lattice point problem II." \textit{Trans. Amer. Math. Soc.} \textbf{125}, pp. 101-113, 1966.

\bibitem{JSS21} J. Roos, A. Seeger, R. Srivastava, ``Lebesgue Space Estimates for Spherical Maximal Functions on Heisenberg Groups.'' \textit{International Math. Research Notices}, https://doi.org/10.1093/imrn/rnab246, 2021. 

\bibitem{Schmidt} W. Schmidt, ``Integer points on hypersurfaces." \textit{Montash. Math.} \textbf{102}, no. 1, 27-58, 1986.

\bibitem{SandS4} R. Shakarchi, E. M. Stein \textit{Functional Analysis: Introduction to Further Topics in Analysis}. Princeton Lectures in Analysis IV, Princeton University Press, 2011.

\bibitem{Stein} E. M. Stein, \textit{Harmonic Analysis: Real-Variable Methods, Orthogonality, and Oscillatory Integrals}. Princeton Mathematical Series, 43. Monographs in Harmonic Analysis, III. Princeton University Press, 1993.


\end{thebibliography}
\end{document}